\documentclass[11pt]{amsart}
\usepackage{amssymb,amsmath,amsfonts,amsthm}
\usepackage{graphicx}
\graphicspath{ {./images/} }
\usepackage{mathrsfs}
\usepackage{psfrag}
\usepackage{mathtools}
\usepackage{color}
\usepackage{todonotes}
\usepackage{enumitem}
\usepackage{booktabs,array,longtable}
\usepackage{chngcntr}
\theoremstyle{plain}
\newtheorem{main}{Theorem}

\newtheorem{theorem}{Theorem}[section]
\newtheorem{lemma}[theorem]{Lemma}
\newtheorem{proposition}[theorem]{Proposition}
\newtheorem{corollary}[theorem]{Corollary}

\theoremstyle{remark}
\newtheorem{remark}[theorem]{Remark}
\newtheorem{definition}[theorem]{Definition}

\newcommand\numberthis{\addtocounter{equation}{1}\tag{\theequation}}

\newcommand{\Sing}{\operatorname{Sing}}

           \def\ea{\end{array}}
          \def\ec{\end{center}}
     \def\ed{\end{description}}
        \def\ee{\end{equation}}
       \def\eea{\end{eqnarray}}
     \def\eeaa{\end{eqnarray*}}
 \def\et{\end{thebibliography}}

\def\Sing{{\rm Sing}}

\usepackage{hyperref}
\hypersetup{colorlinks=true, citecolor=green, linkcolor=blue, hypertexnames=false}

\def\cG{{\mathcal G}}
\def\cA{{\mathcal A}}

\def\cC{{\mathcal C}}
\def\cO{{\mathcal O}}

\def\cU{{\mathcal U}}
\def\cV{{\mathcal V}}
\def\cR{{\mathcal R}}

\def\cF{{\mathcal F}}

\def\cN{{\mathcal N}}
\def\cP{{\mathcal P}}

\def\cR{{\mathcal R}}

\def\vep{\varepsilon}

\def\RR{{\mathbb R}}

\def\NN{{\mathbb N}}

\def\sv{{\rm sv}}

\def\CR{\operatorname{CR}}

\def\Sing{\operatorname{Sing}}
\def\Reg{\operatorname{Reg}}
\def\Int{\operatorname{Int}}

\def\xX{{\mathscr{X}}}

\title[Komuro expansivity and periodic orbits counting]{Komuro Expansivity and Periodic Orbit Growth for Multi-Singular Hyperbolic Flows}

\author{M. J. Pacifico, F. Yang, J. Yang and R. Zheng}

\address{Instituto de Matem\'atica, Universidade Federal do Rio de Janeiro, C. P. 68.530, CEP 21.945-970,  Rio de Janeiro, RJ, Brazil.}
\email{pacifico@im.ufrj.br }

\address{Department of Mathematics, Wake Forest University, Winston-Salem, NC, USA.}
\email{yangf@wfu.edu}

\address{Departamento de Geometria, Instituto de Matem\'atica e Estat\'\i stica, Universidade Federal Fluminense, Niter\'oi, Brazil}
\email{jiagangyang@id.uff.br}

\address{Department of Mathematics, Southern University of Science and Technology, Shenzhen, China.}
\email{rszheng0822@gmail.com}

\thanks{Pacifico’s work was partially supported by CAPES-Finance Code 001, CNPq Projeto Universal No. 404943/2023-3, CNPq-Brazil grant 307776/2019-0 and by
	Foundation for Research Support of the State of Rio de Janeiro (FAPERJ) grant CNE
	E-26/202.850/2018(239069). J.\ Yang's work was
	partially supported by CAPES-Finance Code 001, CNPq-Brazil grant 312054/2023-8,
	CNPq-Projeto Universal No. 404943/2023-3, PRONEX and
	MATH-AmSud 220029. F.\ Yang’s work was partially supported by National Science
	Foundation (NSF) grant DMS-2418590.}       

\begin{document}

\begin{abstract}In this paper we prove that every multi-singular hyperbolic set of a $C^1$ flow is Komuro expansive. This is the strongest natural form of expansivity for flows with singularities accumulated by regular orbits, allowing arbitrary orientation-preserving time reparametrizations. We then use this to establish a two-sided asymptotic counting bound of the form $e^{ht}/t$ for periodic orbits, and prove that the normalized orbit measures converge to the unique measure of maximal entropy. We also establish the same results for a $C^1$ open and dense subset of star vector fields. 
\end{abstract}

\maketitle
\tableofcontents
\section{Introduction}

Uniformly hyperbolic flows provide the classical setting in which orbit
separation, invariant measures, and periodic orbits can all be described
with great precision. Expansivity separates distinct trajectories up to a
small displacement along the flow direction. Using such separation, the work of Margulis,
Bowen, and Parry--Pollicott established prime-orbit counting and
equidistribution results for geodesic, Anosov, and Axiom~A flows. Under the
standard transitivity and non-arithmetic hypotheses, the number of prime
periodic orbits of length at most $t$ is asymptotic to $e^{ht}/(ht)$, and
the associated periodic-orbit measures converge to the measure of maximal
entropy \cite{Margulis,B72,PP}. These results form a basic
model for the relation between topological entropy and the distribution of
periodic trajectories.

The presence of singularities changes this picture substantially. The
Lorenz equations \cite{Lo63} and their geometric models \cite{GW,ABS}
exhibit robust chaotic behavior in which regular trajectories accumulate
on an equilibrium. Such systems are not uniformly hyperbolic, and the
Bowen--Walters notion of expansivity is too strong: whenever a singularity
is accumulated by regular points, Bowen--Walters expansivity \cite{BW72} fails.
Consequently, among the standard notions allowing arbitrary orbitwise
closeness, Komuro expansivity is the strongest form of expansivity one can
expect in this setting. Komuro introduced this notion and proved it for
geometric Lorenz attractors \cite{Ko,Ko1}. The time
reparametrization is required to be an orientation-preserving
homeomorphism of $\mathbb R$, so that both points traverse their full
orbits. In particular, Komuro expansivity genuinely separates distinct
periodic orbits within one period, and is therefore the natural singular analogue of the orbit-separation mechanism that underlies the aforementioned classical periodic-orbit
theory of Margulis, Bowen, and Parry--Pollicott.

A major structural advance was made by Morales, Pac\'ifico, and Pujals
\cite{MPP99, MPP04}, who proved that every $C^1$ robustly transitive set with
singularities for a flow on a closed three-manifold is a partially
hyperbolic attractor or repeller with volume-expanding central direction.
This led to the theory of singular hyperbolicity, which provides the
appropriate robust replacement for uniform hyperbolicity in dimension
three and includes the geometric Lorenz attractor. Ara\'ujo, Pac\'ifico,
Pujals, and Viana subsequently proved that singular-hyperbolic attractors
are expansive, in addition to establishing their basic statistical
properties \cite{APPV}. Thus, in dimension three, the geometric and
ergodic theory of Lorenz-like attractors can be developed in close analogy
with the uniformly hyperbolic theory. For a systematic account of three-dimensional flows and
singular-hyperbolic attractors, we refer to \cite{ArPa10}.

In higher dimensions, singular hyperbolicity (or sectional hyperbolicity proposed in \cite{MM}) is no longer flexible enough:
a robust chain recurrence class may contain singularities of different
indices. Such an example was given by da Luz \cite{daLuz}, and later, Bonatti and da Luz introduced multi-singular hyperbolicity in their seminal paper \cite{BdL} to
describe this phenomenon and showed that, on an open and dense subset of
star flows, the chain recurrent set admits a finite filtration into
multi-singular hyperbolic pieces. % We use the equivalent formulation developed in \cite{CLYZ,PYY25}. 

Recently, the first three authors of this article established expansivity for
sectional-hyperbolic flows \cite{PYY21}, and later
developed the thermodynamic formalism, including existence and uniqueness
of equilibrium states, for sectional-hyperbolic attractors in arbitrary dimension \cite{PYY25}.
Wen and Wen proved that multi-singular hyperbolic sets are rescaling
expansive \cite{WW}. More recently, expansivity and entropy expansiveness
were extended to the multi-singular hyperbolic setting
\cite{PYY25a} and was used to establish the Thermodynamical formalism for star flows. Full Komuro expansivity, however, has remained open beyond
the previously known Lorenz and three-dimensional singular-hyperbolic
settings.

%, Wen and Wen proved that
%multi-singular hyperbolic sets are rescaling expansive \cite{WW}, where
%the allowed spatial error decreases proportionally to the speed of the
%flow near a singularity. More recently, expansivity and
%entropy expansiveness have been established for all sectional- and
%multi-singular hyperbolic flows in arbitrary dimension \cite{PYY21,PYY25a}. Full Komuro
%expansivity, however, has remained open beyond the previously known
%Lorenz and three-dimensional singular-hyperbolic settings.

The first purpose of this paper is to close this gap. We prove that every
multi-singular hyperbolic set is robustly Komuro expansive without any
transitivity assumption and with an expansivity constant that is uniform
under small $C^1$ perturbations. 

Our second purpose is to develop periodic orbit theory for
multi-singular hyperbolic flows. For a positive-entropy multi-singular
chain recurrence class in a $C^1$ open and dense subset of all vector fields,
we prove two-sided Margulis-type bounds
\[
\#\Pi(t,\Delta)= C^{\pm 1}\frac{e^{ht}}t,
\qquad
\#\Pi(t)= C^{\pm 1}\ \frac{e^{ht}}t,
\]
and show that both the fixed-window and cumulative periodic-orbit averages
converge to the unique measure of maximal entropy. Previously, only the lower bound for the exponential growth rate of the form $$\limsup_{t\to\infty}\frac{1}{t}\log \#\Pi(t) \ge h_{top}(X)$$ was proven in \cite{WYZ} under a generic assumption. We also obtain open and
dense versions of Komuro expansivity, counting, and equidistribution for
positive-entropy star flows. The proof combines the geometric control
provided by multi-singular hyperbolicity \cite{PYY23, PYY25, PYY25a}, thermodynamic estimates \cite{PYY22, PYYY}, 
Liao's theory \cite{Li74, GY,LGW, WW,Gan}, and a phase-separation argument that converts
Komuro expansivity into the factor $1/t$ in the upper counting bound. This provides a sharper estimate when compared to the corresponding results in \cite{BCFT}.

\subsection{Background}

Denote by $\xX^1(M)$ the set of all $C^1$ vector fields on a given compact Riemannian manifold without boundary, and let $X\in \xX^1(M)$ be a $C^1$ vector field on a closed Riemannian manifold $M$. Denote by $(f_t)_{t\in\mathbb{R}}$ the flow generated by $X$. 
We denote by $\Sing(X)$ the set of all singularities of $X$, i.e 
\[\Sing(X)=\{x\in M: X(x)=0\}.\] 
A singularity is called hyperbolic if it is a hyperbolic fixed point of the time-$t$ map $f_t$ for any $t\ne 0$. The following classification is motivated by \cite{MPP99, MPP04, SGW}.

\begin{definition}[Lorenz-like singularity]
	Let $\sigma\in\Sing(X)$ be a hyperbolic singularity with the hyperbolic splitting $T_{\sigma}M=E^s\oplus E^u$. 
	\begin{itemize}
		\item The singularity is said to be {\em Lorenz-like} if its stable subspace $E^s$ splits into a dominated splitting $E^s=E^{ss}\oplus E^c$ such that $\dim E^c=1$, and moreover, letting $\lambda^{u}$ be the smallest Lyapunov exponent along $E^u$, and $\lambda^c$ be the Lyapunov exponent along $E^c$, one has 
	\[\lambda^c<0<-\lambda^c<\lambda^u.\]
	We denote $\sv(\sigma)=\lambda^c+\lambda^u>0$, which will be called the {\em saddle value} of $\sigma$.
	\item The singularity is said to be {\em reversed Lorenz-like} if it is Lorenz-like for the reversed vector field $-X$. More precisely, its unstable subspace $E^u$ splits into a dominated splitting $E^u=E^{c}\oplus E^{uu}$ such that $\dim E^c=1$ and \[\lambda^s<-\lambda^c<0<\lambda^c,\]
	where $\lambda^{s}$ is the largest Lyapunov exponent along $E^s$ and $\lambda^c$ is the Lyapunov exponent along $E^c$. 
	In this case, the saddle value of $\sigma$ is defined to be $\sv(\sigma)=\lambda^s+\lambda^c<0$. 
	\end{itemize}
\end{definition}
%A singularity $\sigma\in\Lambda$ is called {\em active}, if there exist regular orbits in $\Lambda$ that gets arbitrarily close to $\sigma$ in both forward and backward iteration. More notably, Theorem \ref{m.counting} will assume that 

For each $x\in M\setminus\Sing(X)$, we denote 
\[N(x)=\{v\in T_xM: v\perp X(x)\},\]
which is called the {\em normal space} at $x$. 
Let $\Lambda$ be any compact invariant set of the flow $(f_t)_{t\in\mathbb{R}}$ such that $\Lambda\setminus\Sing(X)\neq\emptyset$. We define the {\em normal bundle} over $\Lambda$ as 
\[N_{\Lambda}=\bigcup_{x\in\Lambda\setminus\Sing(X)}N(x).\]
Write $(\psi_t)_{t\in\mathbb{R}}$ for the linear Poincar\'e flow, that is, the orthogonal projection of the tangent flow $(Df_t)_{t\in\RR}$ to $N_M$, the normal bundle over $M$. In addition, we defined the {\em scaled linear Poincar\'e flow} as 
$$
\psi^*_t(v)=\frac{|X(x)|}{|X(f_t(x))|}\psi_t(v)=\frac{\psi_t(v)}{\|Df_t|_{\langle X(x)\rangle}\|}.
$$

Now suppose there is an invariant splitting $N_{\Lambda}=E\oplus F$ with respect to the linear Poincar\'e flow $(\psi_t)$, such that for some constants $\eta, T>0$ it holds
\[\|\psi_t|_{E(x)}\|\cdot\|\psi_{-t}|_{F(f_t(x))}\|<e^{-\eta t},\quad \forall x\in \Lambda\setminus\Sing(X), t\ge T,\]
then the splitting $N_{\Lambda}=E\oplus F$ is said to be {\em dominated} with respect to the linear Poincar\'e flow. One also says that $N_{\Lambda}=E\oplus F$ is an {\em $(\eta,T)$-dominated splitting}. When the dimension of $E(x)$ is a constant over $\Lambda$ (which will be assumed in general), it will be called the {\em index} of the splitting over $\Lambda$.

We now introduce {\em multi-singular hyperbolicity}, first defined by Bonatti and da Luz in \cite{BdL}. The version cited here is from \cite{CLYZ} (see also \cite{PYY25}). It is known that, under a mild assumption, these two definitions are equivalent. See Theorems D and E in \cite{CLYZ}.

\begin{definition}\label{def.multi-sing-hyp}
	Let $\Lambda$ be a compact invariant set of the flow  $f_t$ such that $\Lambda\setminus\Sing(X)\neq\emptyset$. We say that $\Lambda$ is {\em multi-singular hyperbolic} if the following properties hold:
	\begin{enumerate}
		\item there is a dominated splitting $N_{\Lambda}=N^{cs}\oplus N^{cu}$ with respect to the linear Poincar\'e flow, with constant index $\dim N^{cs}$.
		\item for each compact isolating neighborhood $V$ of $\Lambda\cap\Sing(X)$ small enough, there are constants $\eta, T>0$ such that 
		\begin{equation}\label{eq.muti-sing-hyp}
			\|\psi_t|_{N^{cs}(x)}\|<e^{-\eta t}, \quad \|\psi_{-t}|_{N^{cu}(f_t(x))}\|<e^{-\eta t},
		\end{equation}
		whenever $x,f_t(x)\in\Lambda\setminus V$ and $t\ge T$;
		\item each singularity $\sigma\in\Lambda\cap\Sing(X)$ is either Lorenz-like or reversed Lorenz-like; moreover, 
		\begin{itemize}
			\item if $\sigma$ is Lorenz-like, then $\dim E^{ss}_{\sigma}=\dim N^{cs}$ and $W^{ss}(\sigma)\cap\Lambda=\{\sigma\}$, where $W^{ss}(\sigma)$ is the strong stable manifold of $\sigma$ that is tangent to $E^{ss}_{\sigma}$;
			\item if $\sigma$ is reversed Lorenz-like, then $\dim E^{uu}_{\sigma}=\dim N^{cu}$ and $W^{uu}(\sigma)\cap\Lambda=\{\sigma\}$, where $W^{uu}(\sigma)$ is the strong unstable manifold of $\sigma$ that is tangent to $E^{uu}_{\sigma}$.
		\end{itemize}
	\end{enumerate}
\end{definition}
%\begin{remark}\label{rmk.multi-sing-hyp}
%	The neighborhood $V$ can be assumed to be a compact $\vep$-neighborhood of $\Sing(X)\cap\Lambda$, with $\vep>0$ arbitrarily small. 
%\end{remark}
\begin{remark}\label{rmk.msh-nonsingular}
	The inequalities in \eqref{eq.muti-sing-hyp} imply that any nonempty compact invariant subset $\Lambda_0\subset \Lambda$ containing no singularities is a uniformly hyperbolic set. 
\end{remark}

%Suppose that $\Lambda$ is singular hyperbolic. % We consider firstly the case of singular hyperbolic sets
%Then all singularities in $\Lambda$ is Lorenz-like. Precisely, for any $\sigma\in\Lambda\cap\Sing(X)$, there is a three-way dominated splitting $T_{\sigma}M=E^{ss}\oplus E^c\oplus E^u$ such that $\dim E^c_{\sigma}=1$ and $E^{ss}_{\sigma}\oplus E^c_{\sigma}$ corresponds to the stable subspace of $T_{\sigma}M$, $E^u_{\sigma}$ is the unstable subspace of $\sigma$; moreover, its saddle value $\sv(\sigma)>0$. 

{
	
Since several inequivalent notions of expansivity have been introduced for flows with singularities, and their terminology is not consistent across the literature, we recall in some detail the definitions and relations that will be used below.
\begin{itemize}
	\item BW-expansive \cite{BW72}: a flow is BW-expansive if for all $\vep>0$ there exists $\delta>0$, such that for all $x,y\in M$ and all continuous functions $C:\RR\to\RR$ with $C(0) = 0$,
	$$
	 d(f_t(x),f_{C(t)}(y))<\delta,\forall t\in\RR \implies y\in f_{[-\vep,\vep]}(x).
	$$
	\item Komuro expansive \cite{Ko,Ko1}: a flow is Komuro expansive if for all $\vep>0$ there exists $\delta>0$, such that for all $x,y\in M$ and all orientation-preserving homeomorphisms $\theta:\RR\to\RR$ satisfying $\theta(0) = 0$,
 	$$
	d(f_t(x),f_{\theta(t)}(y))<\delta,\forall t\in\RR \implies f_{\theta(t_0)}(y)\in f_{[t_0-\vep,t_0+\vep]}(x) \mbox{ for some $t_0\in\RR$}.
	$$
	This property is sometimes called $K^*$-expansivity in part of the
	literature. We use the term Komuro expansivity in order to avoid the
	inconsistent $K/K^*$ terminology across different sources.
	\item Expansivity: a flow is (kinematic) expansive if for all $\vep>0$ there exists $\delta_K>0$, such that for all $x,y\in M$,
	$$ d(f_t(x),f_t(y))<\delta_K,\forall t\implies y\in f_{[-\vep,\vep]}(x).$$
	\item Rescaling expansive \cite{WW}: a flow is rescaling-expansive if for all $\vep>0$ there exists $\delta_R>0$, such that for all $x,y\in M$ and all continuous, strictly increasing function $I:\RR\to\RR$,
	$$
	d(f_t(x),f_{I(t)}(y))\le \delta_R|X(f_t(x))|,\forall t\in\RR \implies f_{I(t)}(y)\in f_{[t-\vep,t+\vep]}(x) \mbox{ $\forall t\in\RR$}.\footnote{In this case one has $f_{I(0)}(y)\in f_{[-\vep,+\vep]}(x) \iff  f_{I(t)}(y)\in f_{[t-\vep,t+\vep]}(x)\ \forall t\in\RR$ after possibly changing $\vep$. See \cite{RWY}.}
	$$
\end{itemize}
It is clear from the definition that 
$$
\mbox{BW-expansive }\implies \mbox{ Komuro expansive }\implies\mbox{ expansive}.
$$
Here the last implication can be seen by taking $\theta$ to be the identity map. Also, 
$$
 \mbox{ Komuro expansive } \implies \mbox{ rescaling expansive }
$$
due to \cite[Theorem 1.3]{RWY}.\footnote{It is noted in \cite[Page 3180]{WW} that even though their definition does not require $I:\RR\to\RR$ to be surjective, the shadowing condition $d(f_t(x),f_{I(t)}(y))\le\delta_R|X(f_t(x))|$ for small $\delta_R>0$  forces $I$ to be surjective whenever $x$ is a regular point. We will see a similar phenomenon in the Komuro expansivity. See Section \ref{s.5}.}

Note that the shadowing condition in rescaling expansivity is much more restrictive than in Komuro expansivity: the condition $d(f_t(x),f_{I(t)}(y))\le \delta_R|X(f_t(x))|$ ensures that the distance of the two orbits becomes small relative to the (already small) flow speed $|X|$ when they approach a singularity. 

It is worth pointing out that if $X$ is a $C^1$ vector field with singularities approximated by regular orbits, then it cannot be BW-expansive. To see this, take $x$ to be a singularity and $C\equiv 0$, then any nearby regular point $y$ automatically satisfies the condition $d(f_t(x),f_{C(t)}(y))<\delta,\forall t\in\RR$. In Komuro expansivity, the time reparametrization $\theta$ is assumed to be an orientation-preserving homeomorphism from $\RR\to\RR$, that is, a strictly increasing, continuous, surjective function. Here the surjectivity cannot be removed (see the definition of WPOTP in \cite{Ko1}); this can be seen by taking $x$ to be a singularity and $\theta(t) = a\cdot\arctan t$ for $a>0$ sufficiently small, and $y$ a regular point sufficiently close to $x$. Indeed it is proven in \cite{Oka} that without the surjectivity assumption, Komuro expansivity is equivalent to BW-expansivity.
% This is the reason that the function $\theta:\RR\to\RR$ in Komuro expansivity is required to be surjective: it forces $\lim_{t\to\pm\infty}\theta(t) = \pm\infty$ which requires $f_{\theta(t)}(y)$ to genuinely move along the entire orbit of $y$ instead of staying on a local orbit segment. 

The discussion above shows that for $C^1$ flows with a singularity accumulated by regular points, Komuro expansivity is the strongest form of expansivity that one can hope for. Such a result has been established for the geometric Lorenz attractor \cite{Ko,Gu,GW} and for all singular hyperbolic attractors in 3-dimensional manifolds \cite{APPV}. In higher dimensions, expansivity (without any time change, minus an exceptional set with zero measure for any invariant probability measure on $\Lambda$) was proven first for all sectional-hyperbolic flows \cite{PYY21} (see also \cite{AC23} for Komuro expansivity with a restrictive condition) and then generalized to multi-singular hyperbolic flows \cite{PYY25a}.

}

\subsection{Statement of main results}
Our first main result shows that every multi-singular hyperbolic set is
robustly Komuro expansive. No transitivity assumption is required, and
the expansivity constant can be chosen uniformly under small $C^1$
perturbations. After that, we will use it to separate periodic orbits with different periods that closely trace one another, establishing two-sided counting bounds and equidistribution results for periodic orbits. 

\begin{main}\label{m.e}
	Suppose $\Lambda$ is a multi-singular hyperbolic set of the flow $(f_t)_{t\in\mathbb{R}}$. Then the flow restricted to $\Lambda$ is robustly Komuro expansive in the following sense: 	
	there exists a $C^1$ neighborhood $\cU$ of $X$ and an open neighborhood  $U_\Lambda$ of $\Lambda$, such that for every $Y\in\cU$, the flow $f_t^Y$ restricted to the maximal invariant set $\Lambda_Y$ in $U_\Lambda$ is Komuro expansive. Furthermore, given $\vep>0$, the constant $\delta>0$ can be chosen uniformly for all $Y\in \cU.$

\end{main}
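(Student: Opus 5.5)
The plan is to reduce Komuro expansivity to the two mechanisms already available for multi-singular hyperbolic sets: the Liao-type scaled tubular neighborhoods and rescaled Poincaré maps (used by Wen--Wen for rescaling expansivity \cite{WW}), and the robust expansivity and entropy-expansiveness estimates of \cite{PYY25a}. First we fix the robust data. Multi-singular hyperbolicity is $C^1$ robust: there are a neighborhood $\cU$ of $X$ and a neighborhood $U_\Lambda$ of $\Lambda$ such that for every $Y\in\cU$ the maximal invariant set $\Lambda_Y$ is multi-singular hyperbolic with uniform constants $(\eta,T)$, a uniform isolating neighborhood $V$ of the singularities, and uniform Lorenz-like (or reversed Lorenz-like) data at each continuation $\sigma_Y$. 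This gives uniform sizes for the fake foliations tangent to $N^{cs},N^{cu}$ in the scaled normal boxes $N_{\rho|Y(x)|}(x)$. All later constants depend only on this data, which yields the claimed uniformity of $\delta$ in $Y$.

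Given $x,y\in\Lambda_Y$ and an orientation-preserving homeomorphism $\theta$ with $d(f_t(x),f_{\theta(t)}(y))<\delta$ for all $t$, we split into cases. \emph{Case 1: $x$ is a singularity $\sigma$.} Then $f_{\theta(t)}(y)$ stays $\delta$-close to $\sigma$ for all $t$. Since $\theta$ is surjective, the whole orbit of $y$ lies in $B_\delta(\sigma)$. For $\delta$ small, local maximality of the hyperbolic singularity forces $y=\sigma$, and the conclusion holds trivially. The symmetric case where $y$ is singular but $x$ is not is ruled out in the same way. \emph{Case 2: both are regular.} We choose a time $t_0$ at which $f_{t_0}(x)$ lies outside $V$. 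If no such time exists, the orbit of $x$ stays in $V$ forever, and the Lorenz-like structure together with $W^{ss}(\sigma)\cap\Lambda=\{\sigma\}$ (resp.\ $W^{uu}$) forces $x$ to be singular, contradicting regularity. Near such a time the speed $|Y(f_{t_0}(x))|$ is bounded below, so $\delta$-closeness is comparable to $\delta$ times the flow speed. Projecting $f_{\theta(t_0)}(y)$ to the normal box at $f_{t_0}(x)$ gives a point $y_0$ with a small time shift. The goal is to show $y_0=f_{t_0}(x)$.

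For this we track the normal displacement along the orbit using the rescaled sectional Poincaré maps $P_{x,t}$, which by Liao's theory are uniformly $C^1$ close to $\psi^*_t$ on boxes of size proportional to $|Y|$. Along excursions outside $V$ the normal component in $N^{cu}$ grows exponentially forward and the $N^{cs}$ component grows backward. So if $y_0\neq f_{t_0}(x)$, at some future or past time the separation would exceed $\delta$, \emph{provided} the shadowing point remains inside the scaled box, i.e.\ the distance stays below $\rho|Y(f_t(x))|$.

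The main obstacle is the passage of orbits near singularities. There the speed tends to zero, so $\delta$-closeness no longer lies within the rescaled box, and the reparametrization $\theta$ may desynchronize the two orbits arbitrarily while one lingers near $\sigma$. I would handle this with a local analysis in linearizing-like coordinates near each Lorenz-like $\sigma$, in the spirit of \cite{PYY21,PYY25a}. Orbits of $\Lambda_Y$ enter $B_\delta(\sigma)$ near $W^s(\sigma)$ avoiding $W^{ss}$, and exit along the one-dimensional $E^c$/$E^u$ direction. Because both orbits must exit (surjectivity of $\theta$) and remain $\delta$-close, they exit through the same branch. The positive saddle value $\sv(\sigma)>0$ then ensures that the $N^{cu}$-separation, measured relative to speed, does not decrease across the passage: the scaled linear Poincaré flow keeps its expansion across $V$, as in the rescaling-expansivity estimates of \cite{WW}. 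The key claim to prove is that at the exit of each singular passage the relative normal displacement, compared with $|Y|$, is at least the relative displacement at entry, up to a uniform constant. The cs-direction is handled by the backward argument, using the reversed Lorenz-like singularities symmetrically. Iterating over successive excursions, any nonzero displacement of $y_0$ eventually becomes macroscopic outside $V$, contradicting $\delta$-closeness. Hence $y_0$ lies on the orbit of $x$ within time $\vep$, which is precisely Komuro expansivity at time $t_0$.
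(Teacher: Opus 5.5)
\textbf{There is a genuine gap: the singular passage is asserted rather than proved, and the justification you give cannot work.}

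Your skeleton matches the paper's: uniform robust constants, the singular case via surjectivity of $\theta$, a base time outside $V$, hyperbolicity along excursions away from $V$, and special treatment of singular passages.

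You note yourself that inside $V$ a $\delta$-close orbit leaves the Liao box of size $\rho|Y|$. Yet you then appeal to the scaled linear Poincar\'e flow "keeping its expansion across $V$" and to Wen--Wen's rescaling estimates. Those statements concern infinitesimal displacements, or displacements of size $\delta|X|$. They say nothing about two orbits at fixed distance $\delta\gg|X|$ near $\sigma$; that is exactly the difference between rescaling and Komuro expansivity. At this scale the $N^{cu}$-part of the displacement exists only through a nonlinear fake-foliation coordinate. Its growth through a passage holds only under a dominance hypothesis, and that hypothesis must itself be shown to persist from entry to exit.

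The missing ingredient is the paper's nonlinear entry/exit dichotomy on $\partial W_r$, where the speed is bounded below and $\delta$-closeness lies inside the scaled box:
\begin{enumerate}
\item If the $F$-length dominates at entry, it still dominates at exit and has grown by $\lambda_\sigma^{t_x}$ (Lemma~\ref{lem.F-large}).
\item If the $E$-length dominates at exit, it dominated at entry and is backward-expanded by $\lambda_\sigma^{t_x}$ (Lemma~\ref{lem.E-large}).
\end{enumerate}
These are proved using locally invariant fake foliations at $\sigma$ and the flow-saturated $cu$-graph $\hat\Sigma$ built from an unstable disk through the entry point (the triples $(D^s,\ell^c,D^u)$). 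The proof uses area expansion in the $E^c\oplus E^u$ cone, which is where $\sv(\sigma)>0$ actually enters.

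The desynchronization you mention is not handled by a local estimate. Whenever $y$ spends less time in $W_r$ than $x$, the paper exchanges the roles of $x$ and $y$ (using $\theta^{-1}$) via Corollary~\ref{cor.switch-base-pf-expansivity}. This costs a factor $0.9$ in the dominance.

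\textbf{Your key claim is too weak even if true.} You assert that the exit displacement is at least the entry displacement up to a uniform constant. But excursions outside $V$ shorter than $T_V$ give no expansion, only a bounded factor $C_0$. An orbit alternating short excursions with singular passages could therefore lose a constant factor each cycle. The paper needs a strict exponential gain per passage, made large by shrinking $r$ so that $0.9^2\lambda_\sigma^{T/2}C_0\ge 1$ (Lemma~\ref{lem.first-circle}). It also needs $F$-dominance to propagate, so that the estimate can be iterated.

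\textbf{Orbits asymptotic to singularities are not covered.} Your step "the displacement eventually becomes macroscopic outside $V$" fails when the forward or backward orbit of $x$ converges to a singularity. Then there are only finitely many excursions in that time direction, and for $x\in W^s(\sigma)\cap W^u(\sigma')$ this happens in both directions. The paper treats these orbits separately:
\begin{enumerate}
\item Shadowing plus surjectivity of $\theta$ force $y$ onto the same local stable (or unstable) manifold. There the $E$-length (or $F$-length) dominates by \eqref{eq.E-large-upon-entering} (or \eqref{eq.F-large-upon-leaving}), so one argues in the opposite time direction.
\item For connecting orbits, $F$-dominance propagated forward to the last entrance into $W_r$ contradicts the $E$-dominance forced by $W^s_{loc}(\sigma)$, unless the displacement vanishes.
\end{enumerate}
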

It is worth noting that in this theorem we do not assume $\Lambda$ to be (chain)
transitive. A slightly stronger result can be proven which allows non-surjective time reparametrizations $\theta$ once an exceptional set is removed. For the precise statement, see Theorem \ref{m.strongerK}.

Combining Theorem \ref{m.e} with \cite[Theorem B]{WW} and \cite[Theorem 1.3]{RWY}, we immediately obtain the following corollary. 
\begin{corollary}There is a $C^1$ residual set $\cR \subset \xX^1(M)$ such that for any $X\in \cR$ and any non-trivial isolated chain transitive set $\Lambda$, the following conditions are equivalent:
	\begin{enumerate}
		\item $\Lambda$  is Komuro expansive for $X$
		\item $\Lambda$ is rescaling expansive for $X$.
		\item $\Lambda$  is locally star for $X$.
		\item $\Lambda$  is multi-singular hyperbolic for $X$.
	\end{enumerate} 
\end{corollary}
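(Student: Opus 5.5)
The corollary follows by chaining known implications, using Theorem \ref{m.e} to supply the one missing arrow. We prove $(4)\Rightarrow(1)\Rightarrow(2)\Rightarrow(3)\Rightarrow(4)$, taking $\cR$ to be the intersection of the residual sets on which \cite[Theorem B]{WW} is valid. Intersections of finitely (or countably) many residual sets are residual, so $\cR$ is residual.

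\emph{Step $(4)\Rightarrow(1)$.} Assume $\Lambda$ is multi-singular hyperbolic for $X$. Apply Theorem \ref{m.e} with $Y=X\in\cU$. The maximal invariant set of $X$ in $U_\Lambda$ contains $\Lambda$, and Komuro expansivity passes to compact invariant subsets, since the defining condition only restricts $x,y\in\Lambda$. Hence $\Lambda$ is Komuro expansive. This step needs no genericity.

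\emph{Step $(1)\Rightarrow(2)$.} This is exactly \cite[Theorem 1.3]{RWY}, which states that Komuro expansive implies rescaling expansive. We check that it applies to a restricted invariant set $\Lambda$. The argument there is local in orbits: it compares $\delta_R|X|$-shadowing with $\delta$-shadowing and forces the reparametrization to be surjective. So it transfers verbatim to flows restricted to compact invariant sets.

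\emph{Steps $(2)\Rightarrow(3)\Rightarrow(4)$.} We read \cite[Theorem B]{WW} as follows. For $C^1$-generic $X$ and a non-trivial isolated chain transitive set $\Lambda$, rescaling expansivity is equivalent to $\Lambda$ being locally star, and locally star is equivalent to multi-singular hyperbolicity. If Theorem B only provides some of these arrows, the missing ones are filled in two ways. First, multi-singular hyperbolicity is robust, so periodic orbits near $\Lambda$ for nearby $Y$ lie in the continuation $\Lambda_Y$, which is hyperbolic by Remark \ref{rmk.msh-nonsingular}; this gives $(4)\Rightarrow(3)$. Second, the already established chain $(4)\Rightarrow(1)\Rightarrow(2)$ supplies the remaining arrows.

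\emph{Main obstacle.} The only delicate point is the bookkeeping of hypotheses. The genericity, isolation, non-triviality and chain transitivity assumptions must match those in \cite[Theorem B]{WW}. In addition, the notion of multi-singular hyperbolicity used there must agree with Definition \ref{def.multi-sing-hyp}. For the latter we invoke \cite[Theorems D and E]{CLYZ}, which give the equivalence of the two definitions in the generic, chain transitive setting. All the substantive dynamics is carried by Theorem \ref{m.e}.
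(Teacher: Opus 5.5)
Your proposal is correct and follows the paper's argument: Theorem \ref{m.e} gives $(4)\Rightarrow(1)$, \cite[Theorem 1.3]{RWY} gives $(1)\Rightarrow(2)$, and \cite[Theorem B]{WW} gives the equivalence of $(2)$, $(3)$, $(4)$ on its residual set. The fallback paragraph is unnecessary, since Theorem B already states this full three-way equivalence; it also could only ever produce arrows out of $(4)$, never into it. Your appeal to \cite{CLYZ} to match the Bonatti--da Luz definition with Definition \ref{def.multi-sing-hyp} is sensible bookkeeping that the paper leaves implicit.
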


Our second goal is to extend part of the classical periodic-orbit theory
of uniformly hyperbolic flows to the multi-singular hyperbolicity setting. Although we
do not obtain the full prime-orbit asymptotic as in \cite{Margulis}, we prove the sharp
two-sided order $e^{ht}/t$, both in a fixed terminal window and
cumulatively, together with equidistribution to the unique measure of
maximal entropy.

For this purpose, denote by $\cP(\Lambda)$ the set of periodic orbits contained in $\Lambda$ (other than the singularities). For $\gamma\in \cP(\Lambda)$, write $\ell(\gamma)$ its prime period and let 
\[
\mu_\gamma=\frac1{\ell(\gamma)}
\int_0^{\ell(\gamma)}\delta_{f_s(x_\gamma)}\,ds
\]
be the normalized orbit measure, where $x_\gamma\in\gamma$ is arbitrary.
Define
\begin{align*}
	\Pi_\Lambda(t)&=\{\gamma\in\cP(\Lambda):\ell(\gamma)\le t\},\\
	\Pi_\Lambda(t,\Delta)&=\{\gamma\in\cP(\Lambda):
	t-\Delta<\ell(\gamma)\le t\}.
\end{align*}
The next main result is the following fixed window version of asymptotic counting and equidistribution.
\begin{main}\label{m.counting}
	Let $\Lambda$ be a multi-singular chain recurrence class of a $C^1$ vector field $X$ on a compact Riemannian manifold $M$. %Further assume that all singularities in $\Lambda$ are non-degenerate and active, 
	Further assume all periodic orbits in $\Lambda$ are homoclinically related, and $h:= h_{\mathrm{top}}(X|_\Lambda)>0$. Then, there exist constants $\Delta>0$, $C>1$, and $t_0>0$ such that, for every
	$t\ge t_0$,
	\begin{equation}\label{eq.window-counting}
		C^{-1}\frac{e^{ht}}{t}
		\le \#\Pi_\Lambda(t,\Delta)
		\le C\frac{e^{ht}}{t}.
	\end{equation}
	Moreover, by increasing $C$ if necessary, it holds
	\begin{equation}\label{eq.cumulative-counting}
		C^{-1}\frac{e^{ht}}{t}
		\le \#\Pi_\Lambda(t)
		\le C\frac{e^{ht}}{t}.
	\end{equation}
	Finally, the periodic orbit measures in the fixed window equidistribute to
	$\mu_{MME}$, the unique measure of maximal entropy:
	\begin{equation}\label{eq.window-equidistribution}
		\nu_{t,\Delta}:=
		\frac1{\#\Pi_\Lambda(t,\Delta)}
		\sum_{\gamma\in\Pi_\Lambda(t,\Delta)}
		\mu_\gamma
		\xrightarrow[t\to\infty]{w^*}\mu_{MME}
	\end{equation}
	where the convergence is in the weak-* topology. 
	The same conclusion holds for the cumulative averages
	\[
	\bar\nu_t:=\frac1{\#\Pi_\Lambda(t)}
	\sum_{\gamma\in\Pi_\Lambda(t)}
	\mu_\gamma.
	\]
	
\end{main}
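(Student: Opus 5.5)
The plan is to follow the Bowen / Climenhaga--Thompson route. Theorem \ref{m.e} will supply the separation needed for the upper bound, and the homoclinic relation of periodic orbits will supply the specification-type gluing needed for the lower bound.

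\emph{Step 1: uniqueness of the MME and Gibbs-type bounds.} Since $\Lambda$ is a multi-singular hyperbolic chain recurrence class with $h>0$ and homoclinically related periodic orbits, I would use the thermodynamic results for multi-singular hyperbolic flows cited in the introduction \cite{PYY25a,PYYY}. These give a unique measure of maximal entropy $\mu_{MME}$. They also give the Climenhaga--Thompson decomposition: ``good'' orbit segments that begin and end away from the singularities, carry uniform hyperbolicity by \eqref{eq.muti-sing-hyp}, and have specification, together with prefixes and suffixes of small entropy. From this I would extract the standard consequences. First, uniform two-sided Bowen bounds $\#E(t,\epsilon)\asymp e^{ht}$ for maximal $(t,\epsilon)$-separated sets. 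Second, the upper Gibbs inequality $\mu_{MME}(B_t(x,\epsilon))\le Ke^{-ht}$. Third, the lower Gibbs inequality on good segments.

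\emph{Step 2: lower bound in a window.} Take a maximal $(t,\epsilon)$-separated set of good segments of length about $t-\tau$; it has cardinality $\gtrsim e^{ht}$. Close each segment into a periodic orbit using a fixed connecting orbit inside a hyperbolic horseshoe, which exists because the periodic orbits are homoclinically related. The closing is done by shadowing in the uniformly hyperbolic part away from $\Sing(X)$, so the resulting periods lie in $(t-\Delta,t]$. Each periodic orbit $\gamma$ arises from at most about $\ell(\gamma)/\epsilon'$ segments, namely one per starting phase; the separation forces this after fixing a Poincar\'e section. This yields $\#\Pi_\Lambda(t,\Delta)\gtrsim e^{ht}/t$.

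\emph{Step 3: upper bound via Komuro expansivity.} This is the main obstacle. Fix $\delta$ from Theorem \ref{m.e} corresponding to a small $\vep$. For each $\gamma\in\Pi_\Lambda(t,\Delta)$ choose roughly $\ell(\gamma)/\epsilon_0$ equally spaced phase points $f_{k\epsilon_0}(x_\gamma)$. I claim the full collection of such points over all $\gamma$ is $(t+\Delta,\delta/3)$-separated, up to a bounded multiplicity. Suppose two such points $p$ and $q$ stay $\delta/3$-close for time $t+\Delta$. Both orbits are periodic with periods within $\Delta$ of each other, so this closeness can be extended periodically to all of $\mathbb R$. To do so, build an orientation-preserving homeomorphism $\theta$ that is piecewise linear with slope $\ell(\gamma')/\ell(\gamma)\in[1-\Delta/t,1+\Delta/t]$. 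Closeness is preserved for all time after a triangle-inequality estimate, which needs $\Delta$ small relative to $\delta$ and a uniform speed bound away from $\Sing(X)$. Near singularities the reparametrization freedom in Komuro's definition absorbs the slowdown; this is exactly why Komuro, rather than kinematic, expansivity is required. Theorem \ref{m.e} then gives $f_{\theta(t_0)}(q)\in f_{[t_0-\vep,t_0+\vep]}(p)$, so $\gamma=\gamma'$ and the phases differ by at most $\vep$. Hence the count satisfies $\sum_\gamma \ell(\gamma)/\epsilon_0\lesssim\#E(t+\Delta,\delta/3)\lesssim e^{ht}$, which gives $\#\Pi_\Lambda(t,\Delta)\lesssim e^{ht}/t$. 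The delicate point is the phase alignment when the orbits pass near a singularity. If that fails, I would first restrict to phase points lying outside the neighborhood $V$ of $\Sing(X)$, and then use the rescaled estimates of \cite{WW} to control the excursions.

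\emph{Step 4: cumulative counts and equidistribution.} Summing \eqref{eq.window-counting} over the windows $(t-(k+1)\Delta,t-k\Delta]$ gives a geometric series dominated by its first term, which yields \eqref{eq.cumulative-counting}. For \eqref{eq.window-equidistribution}, any weak-* limit $\nu$ of $\nu_{t,\Delta}$ is invariant. By the upper counting bound and a Misiurewicz-type argument (Bowen's proof of the variational principle applied to the phase-point separated sets from Step 3), $h_\nu\ge h$. Entropy expansiveness of multi-singular hyperbolic sets \cite{PYY25a} provides upper semicontinuity of entropy, so the limit argument is legitimate. Uniqueness of the MME then gives $\nu=\mu_{MME}$. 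Finally, $\bar\nu_t$ is a convex combination of the window averages, dominated exponentially by the top windows, so it converges to $\mu_{MME}$ as well.
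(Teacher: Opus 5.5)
Your overall architecture matches the paper's: phase points spaced along each orbit plus Komuro expansivity for the upper bound, closing for the lower bound, and a Misiurewicz-type entropy argument plus uniqueness of the MME for equidistribution. The gap is in how you handle the window width. In Step 3 your reparametrization satisfies $|\theta(s)-s|\le|\ell(\gamma)-\ell(\gamma')|$, so the triangle-inequality estimate only closes when the window is narrow compared to the Komuro scale. This is why the paper proves separation and the upper bound only for $\Pi_\Lambda(t,\delta_{\mathrm{per}})$ with $\delta_{\mathrm{per}}$ tiny. You then use the same small $\Delta$ in Step 2 and in \eqref{eq.window-counting}, and a two-sided bound in a small window is false in general. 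Definition~\ref{def.multi-sing-hyp} allows singularity-free classes, e.g.\ the suspension of a horseshoe with constant roof $1$. That class satisfies all hypotheses but has only integer periods, so $\Pi_\Lambda(t,\Delta)=\emptyset$ whenever $\Delta<1$ and $t\in(n+\Delta,n+1)$. Consistently with this, closing by specification or by Liao's lemma controls the period only up to a bounded error ($|q-T|\le\kappa$ in the paper), not an error below the Komuro scale. So Step 2 can only give a lower bound for a wide window (or cumulatively), and Step 3 only an upper bound for narrow windows. As a result, the Misiurewicz argument in Step 4 has no separated set of full growth attached to the window $\Delta$ of the theorem.

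The missing bridge is what the paper supplies. Prove the upper bound for windows of width $\delta_{\mathrm{per}}$, and sum these to get the cumulative upper bound. Prove a cumulative lower bound. Then get the lower bound in a large fixed window by subtraction, $\#\Pi_\Lambda(t,\Delta)=\#\Pi_\Lambda(t)-\#\Pi_\Lambda(t-\Delta)$, with $\Delta$ chosen so that $2Ce^{-h\Delta}<C^{-1}/2$. For equidistribution, proceed in two stages:
\begin{enumerate}
\item Show that averages over any families $\cA_k\subset\Pi_\Lambda(s_k,\delta_{\mathrm{per}})$ with $\frac{1}{s_k}\log\#\cA_k\to h$ converge to $\mu_{MME}$. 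This is where your Step 3 separation enters.
\item Split $\Pi_\Lambda(t,\Delta)$ into $\Delta/\delta_{\mathrm{per}}$ narrow sub-windows and pass to subsequences along which the weights converge. Every sub-window with positive limiting weight contains $\gtrsim e^{ht}/t$ orbits, hence has full growth. Alternatively, run Misiurewicz's argument allowing boundedly many points per atom.
\end{enumerate}

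Two smaller remarks. First, good segments may pass arbitrarily close to $\Sing(X)$, so closing them is not shadowing ``in the uniformly hyperbolic part away from $\Sing(X)$''. It requires Liao's scaled shadowing for strings with hyperbolic-time endpoints. The paper does this by closing returns to a positive-measure block $K$ inside the sets of forward and backward infinite hyperbolic times, using ergodicity of a time-$a$ map and the upper Gibbs bound, rather than gluing good segments by specification. Second, your worry about phase alignment near singularities is unnecessary. Komuro's conclusion is a statement in time, and the triangle inequality uses only uniform continuity of the flow.
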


Indeed the conclusion of Theorem \ref{m.counting} holds robustly in a $C^1$ neighborhood. More precisely:
\begin{main}\label{m.robust.counting} There exists a $C^1$ residual set $\cR\subset \xX^1(M)$, such that for every $X\in \cR$, every non-trivial, isolated chain recurrence class $\Lambda$ that is multi-singular hyperbolic, there exists a $C^1$ neighborhood $\cU$ of $X$ and an open neighborhood $U$ of $\Lambda$, such that for every $Y\in\cU$ and its maximal invariant set $\Lambda_Y$ in $U$, the conclusions of Theorem \ref{m.counting} hold. \end{main}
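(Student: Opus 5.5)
The plan is to reduce Theorem \ref{m.robust.counting} to Theorem \ref{m.counting}, applied to each $Y\in\cU$, by checking that its hypotheses hold robustly.

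\textbf{Step 1: robust multi-singular hyperbolicity.} Multi-singular hyperbolicity is a $C^1$-robust property: the dominated splitting of the linear Poincar\'e flow and the estimates \eqref{eq.muti-sing-hyp} persist, and so do the Lorenz-like or reversed Lorenz-like types of the singularities together with the conditions on $W^{ss}$ and $W^{uu}$. This follows from the cone-field characterization in \cite{CLYZ,PYY25}. Hence there are $\cU$ and $U$ such that the maximal invariant set $\Lambda_Y$ is multi-singular hyperbolic for every $Y\in\cU$. Since $\Lambda$ is isolated, I shrink $U$ to an isolating neighborhood. For $X$ in a residual set, chain recurrence classes vary continuously (\cite{BdL} and standard generic arguments). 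So, after shrinking $\cU$, $\Lambda_Y$ is a single chain recurrence class of $Y$, equal to the continuation of $\Lambda$.

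\textbf{Step 2: positive entropy and homoclinic relation.} For generic $X$, a non-trivial isolated chain class containing a hyperbolic periodic orbit $p$ coincides with its homoclinic class $H(p)$ (Bonatti--Crovisier). Because $\Lambda$ is multi-singular hyperbolic, all periodic orbits in $\Lambda$ have the index $\dim N^{cs}$. A transverse homoclinic intersection then gives a horseshoe, so $h_{\mathrm{top}}>0$, and this survives perturbation since the horseshoe has a continuation. For the homoclinic relation I plan to use the uniform sizes of the stable and unstable manifolds of periodic orbits in multi-singular hyperbolic sets \cite{PYY23,PYY25a}. Periodic orbits away from the singular neighborhood $V$ have stable and unstable manifolds of uniform size. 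Chain transitivity of $\Lambda_Y$, combined with a Liao-type shadowing or closing argument \cite{Li74,GY,Gan}, should then show that any two periodic orbits of $Y$ in $\Lambda_Y$ are homoclinically related, and that this holds uniformly in $Y$.

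\textbf{Step 3 and main obstacle.} Once the hypotheses are verified for each $Y\in\cU$, Theorem \ref{m.counting} applies to $Y$ and gives the conclusions. I expect the main obstacle to be showing that all periodic orbits of the \emph{perturbed} field $Y$ are homoclinically related, and that $\Lambda_Y$ stays a chain recurrence class, since generic properties need not hold for $Y$ itself. My first approach would be to prove that, for a multi-singular hyperbolic chain class, chain transitivity plus uniform local product structure away from singularities already forces a single homoclinic class. Near the singularities I would use the Lorenz-like structure to pass orbits through $V$ while keeping the stable and unstable sizes uniform. If that fails, I would fall back on the uniformity of constants in Theorem \ref{m.e} and in the thermodynamic estimates \cite{PYY22,PYYY}. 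The aim then would be to run the proof of Theorem \ref{m.counting} directly on $\Lambda_Y$, replacing the homoclinic hypothesis by a specification property on a positive-entropy horseshoe family.
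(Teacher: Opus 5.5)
Your reduction has a genuine gap. It needs the \emph{hypotheses} of Theorem \ref{m.counting} to hold for every $Y\in\cU$: that $\Lambda_Y$ is a chain recurrence class, and that all periodic orbits in $\Lambda_Y$ are homoclinically related. Neither property is open, and your argument establishes neither.

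Genericity of $X$ gives at most continuity of chain classes \emph{at} $X$, i.e.\ Hausdorff closeness of the classes of nearby fields. It says nothing about whether the maximal invariant set of a non-generic $Y$ in $U$ is chain transitive. Nothing prevents a singularity of $\Lambda$ from detaching from the homoclinic class of the continuation $p_Y$ after perturbation, and then $\Lambda_Y$ is no longer a single chain class.

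Likewise, uniform-size stable and unstable manifolds away from $V$, together with chain transitivity, do not produce a transverse intersection $W^u(\gamma)\pitchfork W^s(\gamma')$ for two \emph{given} periodic orbits. Liao--Gan shadowing manufactures new periodic orbits near pseudo-orbits; it does not relate existing ones. Your Step 2 only asserts that it ``should'' work. You flag this yourself as the main obstacle, and it is exactly where the plan breaks.

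The paper never re-verifies the hypotheses of Theorem \ref{m.counting} for $Y$. It observes instead that the proof of Theorem \ref{m.counting} uses only three inputs:
\begin{enumerate}
\item existence of an MME with the Gibbs upper bound \eqref{eq.gibbs-G}, used for the lower bound via Lemma \ref{lem.recurrent-hyperbolic-closing-block}, whose periodic orbits automatically lie in the maximal invariant set $\Lambda_Y$;
\item the partition-sum bound \eqref{eq.global-partition-upper} together with Komuro expansivity, used for the upper bound via Lemma \ref{lem.phase-separated};
\item uniqueness of the MME, used for equidistribution.
\end{enumerate}
Each input is known robustly on $\Lambda_Y$ for $Y$ near a generic $X$. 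Existence and uniqueness of the MME come from \cite[Theorem H]{PYY25a} under exactly these assumptions. The Gibbs and partition-sum bounds come from \cite[Section 9]{PYY25a}, where the hypotheses of \cite{PYYY} are verified. Komuro expansivity is Theorem \ref{m.e}.

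Your fallback points in this direction, but ``specification on a positive-entropy horseshoe family'' would not by itself give uniqueness of the MME on all of $\Lambda_Y$, nor the Gibbs bound for it. One must also exclude measures of maximal entropy not carried by the horseshoes, e.g.\ near the singularities, and that is what the robust result of \cite{PYY25a} supplies. Finally, the theorem does not claim $\Delta$ and $C$ uniform over $\cU$, so you need not chase uniform constants.
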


% Here we do not claim the constants $\Delta,C$ to be uniform in $\cU$. 

\subsection{Application to star flows}

In this section we apply Theorem \ref{m.e} and \ref{m.counting} to star vector fields. Recall that a vector field is said to have the star property, if there exists a $C^1$ neighborhood $\cU$ of $X$, such that for every $Y\in\cU$, every critical element (i.e., periodic orbit and singularity) of $Y$ is hyperbolic. Denote by $\xX^1_*(M)$ the set of all star vector fields on $M$. 
\begin{main}\label{m.e.star}
	There exists a $C^1$ open and dense subset $\cO\subset \xX^1_*(M)$ such that every $X\in \cO$ is robustly Komuro expansive on its chain recurrence set $\CR(X)$.
\end{main}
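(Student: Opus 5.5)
The plan is to reduce Theorem \ref{m.e.star} to Theorem \ref{m.e} using the Bonatti--da Luz structure theorem for star flows \cite{BdL}. First, by \cite{BdL}, there is a $C^1$ open and dense subset $\cO_1\subset\xX^1_*(M)$ such that for every $X\in\cO_1$ the chain recurrent set $\CR(X)$ admits a finite filtration whose pieces are multi-singular hyperbolic. Equivalently, $\CR(X)$ is a finite union of compact invariant sets $\Lambda_1,\dots,\Lambda_k$, each multi-singular hyperbolic or a hyperbolic critical element. Here we use the equivalence of definitions from \cite{CLYZ} (Theorems D and E) to pass to Definition \ref{def.multi-sing-hyp}. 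In the star setting, the chain recurrence classes are finitely many and robustly isolated. After shrinking $\cO_1$ to an open dense set where the number of classes is locally constant (their continuations vary upper semicontinuously), each $\Lambda_i$ has an isolating neighborhood $U_i$ with pairwise disjoint closures. Moreover, for $Y$ near $X$, $\CR(Y)\subset\bigcup_i U_i$, and $\CR(Y)\cap U_i$ is contained in the maximal invariant set $\Lambda_{i,Y}$ of $Y$ in $U_i$.

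Second, apply Theorem \ref{m.e} to each $\Lambda_i$. This gives neighborhoods $\cU_i$ of $X$ and $U_{\Lambda_i}\subset U_i$ such that $f^Y_t|_{\Lambda_{i,Y}}$ is Komuro expansive, with $\delta_i(\vep)$ uniform in $Y\in\cU_i$. Hyperbolic singularities and periodic orbits that are isolated classes are trivially Komuro expansive, uniformly near $X$. Set $\cU=\bigcap_i\cU_i$ and $\delta(\vep)=\min\{\min_i\delta_i(\vep),\,d_0/3\}$, where $d_0$ is a lower bound for the distance between the sets $\overline{U_i}$. Note that $\CR(Y)$ is closed and invariant, and we must verify Komuro expansivity on $\CR(Y)$ itself.

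Third, take $x,y\in\CR(Y)$ and $\theta$ with $d(f_t(x),f_{\theta(t)}(y))<\delta$ for all $t$. Since $x\in U_i$ for some $i$, its whole orbit lies in $\CR(Y)\cap U_i\subset\Lambda_{i,Y}$. Because $\delta<d_0/3$, the orbit of $y$ must also lie in the same $U_i$. Hence $y\in\Lambda_{i,Y}$, and Komuro expansivity of $\Lambda_{i,Y}$ gives the conclusion. The $C^1$ neighborhood $\cU$ shows that the property is robust.

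The main obstacle is the first step. We need the Bonatti--da Luz decomposition as an open and dense property, not merely a residual one, with isolating neighborhoods that persist. We also need the continuations $\Lambda_{i,Y}$ to lie inside the neighborhoods $U_{\Lambda_i}$ produced by Theorem \ref{m.e}. The approach is to use that in \cite{BdL} the filtration is built from robust properties: the star property gives finitely many robustly isolated classes for an open dense set, and multi-singular hyperbolicity of a maximal invariant set in a filtrating neighborhood is $C^1$ open. If needed, we would replace $U_i$ by $U_{\Lambda_i}$ and use upper semicontinuity of $\CR$ to keep $\CR(Y)$ inside $\bigcup_i U_{\Lambda_i}$.
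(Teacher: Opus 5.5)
Your proposal is correct and follows essentially the same route as the paper: the Bonatti--da Luz/CLYZ open and dense decomposition of $\CR(X)$ into finitely many disjoint filtrating regions with multi-singular hyperbolic maximal invariant sets; Theorem~\ref{m.e} applied to each piece with uniform constants; upper semicontinuity of $\CR$ to keep $\CR(Y)$ inside the disjoint neighborhoods $U_{\Lambda_i}$ given by Theorem~\ref{m.e}; and $\delta=\min\{d_0/2,\delta_1,\dots,\delta_m\}$ to force $x$ and $y$ into the same piece. The appeal to finitely many robustly isolated chain recurrence classes with a locally constant count is not needed (the paper notes the pieces need not even be chain transitive), because the disjoint filtrating regions together with your fallback $U_i=U_{\Lambda_i}$ are exactly what the paper uses.
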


Next, we consider the counting and equidistribution of periodic orbits. Let $\Pi(t)$ (resp.\ $\Pi(t,\Delta)$) denote the set of all periodic orbits of $X$ with prime period at most $t$ (resp. in $(t-\Delta,t]$)
and $\mu_\gamma$ be defined as before. This time we consider all periodic orbits on $M$, not just those in $\Lambda$. 

\begin{main}\label{m.counting.star}
	There exists a $C^1$ open and dense subset $\cO$ in  $\xX^1_*(M)$ such that for every $X\in \cO$ with topological entropy $h = h_{\mathrm{top}}(X) >0$, the following hold.
	
	There exist constants $\Delta>0$, $C>1$, and $t_0>0$ such that, for every
	$t\ge t_0$,
	\begin{equation*}
		C^{-1}\frac{e^{ht}}{t}
		\le \#\Pi(t,\Delta)
		\le C\frac{e^{ht}}{t}.
	\end{equation*}
	Moreover, by increasing $C$ if necessary, it holds
	\begin{equation*}
		C^{-1}\frac{e^{ht}}{t}
		\le \#\Pi(t)
		\le C\frac{e^{ht}}{t}.
	\end{equation*}
	Finally, the periodic orbit measures in the fixed window equidistribute to
	$\mu_{MME}$, the unique measure of maximal entropy of $X$:
	\begin{equation*}
		\nu_{t,\Delta}:=
		\frac1{\#\Pi(t,\Delta)}
		\sum_{\gamma\in\Pi(t,\Delta)}
		\mu_\gamma
		\xrightarrow[t\to\infty]{w^*}\mu_{MME}
	\end{equation*}
	where the convergence is in the weak-* topology. 
	The same conclusion holds for the cumulative averages
	\[
	\bar\nu_t:=\frac1{\#\Pi(t)}
	\sum_{\gamma\in\Pi(t)}
	\mu_\gamma.
	\]
\end{main}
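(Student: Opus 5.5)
The plan is to deduce Theorem \ref{m.counting.star} from Theorem \ref{m.counting} and Theorem \ref{m.robust.counting}, using the Bonatti--da Luz structure theory for star flows \cite{BdL} to pass to a finite decomposition of $\CR(X)$.

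\emph{Choice of $\cO$.} By \cite{BdL} (in the formulation of \cite{CLYZ,PYY25}), there is a $C^1$ open and dense $\cO_1\subset\xX^1_*(M)$ such that for $X\in\cO_1$ the chain recurrent set $\CR(X)$ has finitely many chain recurrence classes $\Lambda_1,\dots,\Lambda_k$. These classes are robustly isolated, each is either a hyperbolic singularity or multi-singular hyperbolic, and the number of classes and their neighborhoods $U_i$ vary continuously on a neighborhood. I would then intersect $\cO_1$ with the open and dense set, obtained from the robust version of Theorem \ref{m.robust.counting}, on which every nontrivial class has all its periodic orbits homoclinically related. For star flows this is automatic on an open and dense set: a nontrivial class of a star flow is robustly a homoclinic class, and the periodic orbits of a multi-singular hyperbolic class have constant index given by $\dim N^{cs}$, hence they are homoclinically related. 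Since the residual set in Theorem \ref{m.robust.counting} is only used to find the neighborhood $\cU$, I would define $\cO$ as the union of the neighborhoods $\cU$ produced at the residual points $X$. This set is open. It is dense because residual sets are dense, and the neighborhoods can be shrunk into $\cO_1$.

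\emph{Counting.} Fix $X\in\cO$ with $h=h_{\mathrm{top}}(X)>0$. Every periodic orbit lies in $\CR(X)$, so it lies in some $\Lambda_i$. Since entropy is concentrated on $\CR(X)$, we have $h=\max_i h_i$ with $h_i=h_{\mathrm{top}}(X|_{\Lambda_i})$. The key input is that the measure of maximal entropy is unique, which requires that exactly one class $\Lambda_{i_0}$ attains $h$. This genericity must also be built into $\cO$. Entropy varies continuously among multi-singular hyperbolic classes, because they are robustly entropy expansive by \cite{PYY25a}. So the condition $h_i\neq h_j$ for $h_i>0$ is open, and it is dense after a local perturbation near a periodic orbit. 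I expect this to be the main obstacle, and I would try first to cite the uniqueness-of-MME results for star flows from \cite{PYY25a}. With that in hand, I apply Theorem \ref{m.counting} to $\Lambda_{i_0}$ to get the bounds $C^{\pm1}e^{ht}/t$ for $\#\Pi_{\Lambda_{i_0}}(t,\Delta)$ and $\#\Pi_{\Lambda_{i_0}}(t)$.

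\emph{Other classes and equidistribution.} For every class $j\ne i_0$ we have $h_j<h$. Expansivity together with the upper counting argument, or simply the upper bound of Theorem \ref{m.counting} when $h_j>0$, gives $\#\Pi_{\Lambda_j}(t)\le C_j e^{(h_j+\epsilon)t}=o(e^{ht}/t)$. A class with $h_j=0$ is either a singularity or a class with subexponential orbit growth, by the same expansivity argument. Summing the finitely many contributions gives both counting statements for $X$. For the equidistribution, $\nu_{t,\Delta}$ is a convex combination of the average over $\Lambda_{i_0}$, which converges to $\mu_{MME}$ by Theorem \ref{m.counting}, and averages over the other classes. The weight on the other classes is $o(1)$ by the counting estimates. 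Therefore $\nu_{t,\Delta}\to\mu_{MME}$ in the weak-* topology, and $\bar\nu_t\to\mu_{MME}$ by the same argument.
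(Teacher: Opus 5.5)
\textbf{Gap in the decomposition step.} Your overall architecture matches the paper. You isolate the piece carrying the unique MME and apply the robust counting theorem there. You show all other periodic orbits contribute $o(e^{ht}/t)$ through a separation argument plus a strict entropy deficit. Equidistribution then follows because the remaining weight vanishes.

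The step that does not hold up is the decomposition. You assert that on an open and dense set $\CR(X)$ splits into finitely many robustly isolated chain recurrence classes. You then treat each class separately, via Theorem \ref{m.counting} or the expansivity count, sum finitely many per-class bounds, and use $\max_{j\ne i_0}h_j<h$.

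What \cite{BdL} and \cite{CLYZ} actually provide, and what the paper uses in proving Theorem \ref{m.e.star}, is weaker. There are finitely many filtrating regions whose maximal invariant sets are multi-singular hyperbolic, and the paper stresses these need not be chain transitive. So the hypotheses of Theorem \ref{m.counting} need not hold piece by piece: a piece need not be a chain recurrence class, and its periodic orbits need not be homoclinically related. Nothing you cite rules out infinitely many classes in the remainder. In that case neither the constants $C_j$ nor the gaps $h-h_j$ are uniform, and ``summing the finitely many contributions'' fails. Also, for a non-generic $Y$ in your union of neighborhoods, Theorem \ref{m.robust.counting} applies to the maximal invariant sets in the $U_i$, not to the chain recurrence classes of $Y$.

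\textbf{The missing idea.} Never split the remainder into classes. By \cite[Theorem C]{PYY25a}, on an open and dense set of star flows with $h>0$ there is an open $U$ with $\CR(X)\cap\partial U=\emptyset$ such that:
\begin{itemize}
\item the maximal invariant set $\Lambda$ in $U$ carries the unique MME and satisfies the conclusions of Theorem \ref{m.robust.counting};
\item the whole complement $K=\CR(X)\setminus U$ has a uniform gap $h_{\mathrm{top}}(X|_K)\le h-\eta$.
\end{itemize}
Intersecting with the open and dense set of Theorem \ref{m.e.star} gives Komuro expansivity on all of $\CR(X)$. The argument of Lemma \ref{lem.phase-separated}, applied to $K$ all at once, then shows that one point per orbit in $\Pi_K(t,\rho)$ forms a $(t,\epsilon_{\mathrm{per}})$-separated set. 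This yields $\#\Pi_K(t)=O(e^{(h-\eta/2)t}/t)$ with no finiteness, positivity of entropy, or homoclinic relation required for anything in $K$.

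\textbf{Two incorrect side claims.} You route around both by citing \cite{PYY25a}, but they are wrong as stated. First, equal index does not imply homoclinic relation; that holds only generically, within a chain recurrence class, via connecting lemmas. Second, robust entropy expansiveness gives only upper semicontinuity of entropy, not continuity, so it does not make ``$h_i\ne h_j$'' an open condition.
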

Here the uniqueness of the MME is due to \cite[Theorem A]{PYY25a}.

\section{Preliminaries}

Denote by $\Reg(X)$ the set of regular points of $X$, that is, $\Reg(X) = M\setminus \Sing(X)$. 
Given $x\in\Reg(X)$, for each $\rho>0$, let 
\[N_{\rho}(x)=\{v\in N(x): |v|\le\rho\}.\]
If $\rho$ does not exceed the injectivity radius of $M$, we define
\[\cN_{\rho}(x)=\exp_x(N_{\rho}(x)),\]
where $\exp_x:T_xM\to M$ is the exponential map at $x$. The submanifold $\cN_{\rho}(x)$ will be called a {\em normal manifold} at $x$. Along every regular orbit from $x$ to $f_t(x)$, the holonomy of flow induces a {\em sectional Poincar\'e map} from $\cN_{\rho}(x)$ to $\cN_{\rho}(f_t(x))$, which will be denoted as $\cP_{t,x}$. By the implicit function theorem, the domain of the map $\cP_{t,x}$ contains a small neighborhood of $x$ in $\cN_{\rho}(x)$. Precisely, we have the following result.

\begin{lemma}[{\cite[Lemma 2.2]{GY}}]
\label{lem.liao-size}
Given $X\in \xX^1(M)$, there exist constants $\overline\rho_0>0$ and $K_0>1$ such that for every $\rho\in(0,\overline\rho_0]$ and for every regular point $x\in M$, the holonomy map 
\[\cP_{1,x}:\cN_{\rho K_0^{-1}|X(x)|}(x)\to\cN_{\rho|X(f_1(x))|}(f_1(x))\]
is well-defined, differentiable, and injective.  
\end{lemma}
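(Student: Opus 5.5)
The plan is to reduce the lemma to a \emph{uniform} implicit function theorem statement, by rescaling space near each regular point by the flow speed $|X(x)|$. Let $L=\sup_M\|DX\|$. Since $\frac{d}{ds}X(f_s(x))=DX_{f_s(x)}X(f_s(x))$, Gronwall's inequality gives
\[
e^{-L|s|}|X(x)|\le |X(f_s(x))|\le e^{L|s|}|X(x)|\quad\text{for all } s.
\]
In particular, along the time-one orbit segment from $x$ the speed is comparable to $|X(x)|$ up to the factor $e^{L}$, and the same holds for points whose orbits stay within distance $c|X(x)|$ of that segment, with $c$ small.

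\emph{Step 1 (rescaling).} Fix $r_0$ below the injectivity radius. For a regular point $x$, consider the rescaled vector field on the ball $\{|v|<r_0/|X(x)|\}\subset T_xM$:
\[
\widetilde X_x(v)=|X(x)|^{-1}\,\bigl(D\exp_x|_{|X(x)|v}\bigr)^{-1}X\bigl(\exp_x(|X(x)|v)\bigr).
\]
Because space and velocity are scaled by the same factor, the flow of $\widetilde X_x$ at time $t$ is the conjugate of $f_t$, so the time-one holonomy is preserved. Moreover $|\widetilde X_x(0)|=1$, and $D\widetilde X_x$ is, up to errors controlled by the geometry of $\exp$ on a compact manifold, just $DX$ evaluated at nearby points. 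Hence the family $\{\widetilde X_x\}_{x\in\Reg(X)}$, restricted to a ball of fixed radius $R$ (which lies inside the domain once $|X(x)|\le r_0/R$, and otherwise we work at unit scale directly), has uniformly bounded $C^1$ norm and uniformly equicontinuous derivative, by uniform continuity of $DX$ on $M$. Under this rescaling the normal disks $\cN_{\rho|X(x)|}(x)$ and $\cN_{\rho|X(f_1x)|}(f_1x)$ become disks of radius $\rho$ and $\rho\,|X(f_1x)|/|X(x)|\in[\rho e^{-L},\rho e^{L}]$ orthogonal to vectors of length comparable to $1$. The target disk is read in coordinates at $f_1(x)$; the transition to coordinates at $x$ has uniformly controlled $C^1$ norm.

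\emph{Step 2 (well-definedness and differentiability).} For $w$ in the source disk, solve $\Phi(w,\tau)=0$ near $(0,1)$, where $\Phi(w,\tau)=\langle \widetilde f_\tau(w)-\widetilde f_1(0),\,\widetilde X(\widetilde f_1(0))\rangle$. At $(0,1)$ we have $\partial_\tau\Phi=|\widetilde X(\widetilde f_1(0))|^2\ge e^{-2L}$. By the quantitative implicit function theorem, whose constants depend only on the uniform $C^1$ bounds and the modulus of continuity from Step 1, there are $\overline\rho_0>0$ and $K_0>1$ independent of $x$ with the following property: for $|w|\le \rho/K_0$ and $\rho\le\overline\rho_0$ there is a unique $C^1$ hitting time $\tau(w)$ with $|\tau(w)-1|$ small, and $\widetilde f_{\tau(w)}(w)$ lies in the target disk of radius $\rho e^{-L}\le\rho|X(f_1x)|/|X(x)|$. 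This gives $\cP_{1,x}$ and its differentiability. Undoing the rescaling yields exactly the statement, with radii $\rho K_0^{-1}|X(x)|$ and $\rho|X(f_1(x))|$.

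\emph{Step 3 (injectivity).} Suppose $\cP_{1,x}(w_1)=\cP_{1,x}(w_2)$. Then $w_2=\widetilde f_{s}(w_1)$ with $|s|\le|\tau(w_1)-1|+|\tau(w_2)-1|$, which is small. In the rescaled picture the flow has speed in $[e^{-L}/2,2e^{L}]$ and is uniformly $C^1$-close to a constant field near the source disk, which is uniformly transverse to it. A short orbit arc (of time less than a uniform $s_0$) therefore meets the disk at most once, forcing $s=0$ and $w_1=w_2$.

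The main obstacle is the uniformity in Step 1: ensuring that the implicit function theorem constants do not degenerate as $|X(x)|\to0$ near singularities. This is exactly what the rescaling by $|X(x)|$ combined with uniform continuity of $DX$ handles. I would check carefully that the error terms coming from $\exp_x$ and from parallel transport between $x$ and $f_1(x)$ stay uniformly $C^1$-small on balls of radius $\sim|X(x)|$.
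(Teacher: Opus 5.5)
The paper gives no proof of its own here; it imports the lemma from \cite{GY}. Your argument is correct and is essentially the standard one behind that lemma: rescaling space by $|X(x)|$ makes the flow box at $x$ uniform in $x$ (compare the map $F_x$ on $U_{\overline\rho_0|X(x)|}(x)$ that the paper quotes from \cite{WW} just after the lemma), so the implicit function theorem has uniform size, and injectivity comes from transversality of short orbit arcs to $\cN(x)$. Two cosmetic fixes: in Step 2, $\Phi$ should detect crossings of the actual section $\exp_{f_1(x)}(N(f_1(x)))$ (e.g.\ via $\exp_{f_1(x)}^{-1}$ and the inner product at $f_1(x)$) rather than the affine hyperplane in the $T_xM$ chart, which differs from it by a rescaled $C^1$-small error; and in Step 3 the rescaled field is only uniformly $C^0$-close, not $C^1$-close, to the constant field $\widetilde X_x(0)$ on a small ball, which is all the transversality argument needs.
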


For each $x\in\mathrm{Reg}(X)$ and $t\in [0,1]$, the map $\cP_{t,x}$ can be lifted to a map $P_{t,x}$ on the normal bundle via the exponential map as follows:
\[P_{t,x}=\exp_{x_t}^{-1}\circ \cP_{t,x}\circ \exp_x.\]
Note that the domain of $P_{t,x}$ contains $N_{\overline\rho_0K_0^{-1}|X(x)|}(x)$. One then defines the {\em scaled sectional Poincar\'e map on the normal bundle}:
\[P_{t,x}^*: N_{\overline\rho_0K_0^{-1}}(x)\to N_{\overline\rho_0}(x_t),\quad P_{t,x}^*(y):=\frac{P_{t,x}(y|X(x)|)}{|X(x_t)|}.\]
As $D_x\cP_{t,x}=D_0P_{t,x}=\psi_{t,x}$, one has $D_0P_{t,x}^*=\psi^*_{t,x}$.

Reducing $\overline\rho_0>0$ if necessary, one can define the {\em local Poincar\'e map} at each regular point $x$, denoted by $\cP_x$, which maps every point $y\in B_{\overline\rho_0|X(x)|}(x)$ to the unique point of intersection between $\cN_{\overline\rho_0|X(x)|}(x)$ and the local flow line at $y$. Here, for each $\rho>0$, $B_{\rho}(x)$ denotes the $\rho$-neighborhood of $x$. 
By \cite[Proposition 2.2]{WW}, the local Poincar\'e map $\cP_x:B_{\overline\rho_0|X(x)|}(x)\to \cN_{\overline\rho_0|X(x)|}(x)$  can be defined through a flow-box map 
\[F_x: U_{\overline\rho_0|X(x)|}(x)\to M,\quad F_x(v+tX(x))=f_t(\exp_x(v)),\] 
where 
\[U_{\overline\rho_0|X(x)|}(x)=\{v+tX(x)\in T_xM: v\in N_{\overline\rho_0|X(x)|}(x), |t|<\overline\rho_0\}.\]
Then, for each $y=F_x(v+tX(x))\in B_{\rho_0|X(x)|}(x)$, one defines
\[\cP_x(y)=F_x(v)=\exp_x(v).\]
Moreover, assuming $\overline\rho_0$ is small enough, one has
\[m(D_pF_x)\ge \frac{1}{3}\quad \text{and}\quad \|D_pF_x\|\le 3,\]
for every $p\in U_{\overline\rho_0|X(x)|}(x)$, where $m(A)$ denotes the mininorm of the linear operator $A$. 
One can find more related discussions in \cite[Section 2.3]{PYY25}.

\section{Fake foliations and coordinate systems}
In this section we review the fake foliation coordinate systems first constructed in \cite{PYY25} and \cite{PYY25a}.
Let $\Lambda$ be a multi-singular hyperbolic set, with a dominated splitting $N_{\Lambda}=N^{cs}\oplus N^{cu}$ for the linear Poincar\'e flow. Note that an $(\eta, T)$-dominated splitting $N_{\Lambda}=N^{cs}\oplus N^{cu}$ with respect to the linear Poincar\'e flow $(\psi_t)_{t\in\mathbb{R}}$ is also an $(\eta, T)$-dominated splitting with respect to the scaled linear Poincar\'e flow $(\psi^*_t)_{t\in\mathbb{R}}$.

\subsection{A coordinate system along regular orbits}\label{sect.coord-reg}
For each $x\in \Lambda\setminus\Sing(X)$ and $v\in N(x)$, there is a unique decomposition $v=v^{cs}+v^{cu}$ with $v^{cs}\in N^{cs}(x)$ and $v^{cu}\in N^{cu}(x)$. Given $\beta>0$, one defines on  $N(x)$ the following cones:
\begin{gather*}
	C^{cs}_{\beta}(x)=\{v=v^{cs}+v^{cu}\in N(x): |v^{cu}|<\beta|v^{cs}|\},\\
	C^{cu}_{\beta}(x)=\{v=v^{cs}+v^{cu}\in N(x): |v^{cs}|<\beta|v^{cu}|\}.
\end{gather*}
We translate these cones to each point on $N(x)$, obtaining cones on $N(x)$. Abusing notations, we will denote these cones also by $C^{cs}_{\beta}$ and $C^{cu}_{\beta}$, respectively.

Recall that the scaled sectional Poincar\'e map $P^*_{1,x}$ is defined on $N_{\overline\rho_0K_0^{-1}}(x)$ with a uniform size and $D_0P_{1,x}^*=\psi^*_{1,x}$. Thus, we can extend its definition to the entire normal space $N(x)$, and make sure that it remains $C^1$ close to $\psi^*_{1,x}$. 
Precisely, given $\iota>0$, one can take $\nu>0$ small and $K>0$ large, and define on $N(x)$ a $C^1$ map $\tilde{P}_{1,x}^*$ such that
\begin{equation}
	\tilde{P}_{1,x}^*=\left\{\begin{array}{ll}
		P^*_{1,x}, & |y|<\nu,\\
		\psi^*_{1,x}, & |y|>K\nu,
	\end{array}\right.
\end{equation}
and  
\[\|\tilde{P}_{1,x}^*-\psi^*_{1,x}\|_{C^1}<\iota.\]
Moreover, the constants $\nu$ and $K$ can be taken independent of $x$.

Now, for any $\beta>0$, one can require $\iota>0$ to be small enough and apply the Hadamard-Perron Theorem \cite[Theorem  6.2.8]{KH} to the family of maps $\tilde{P}_{1,x}^*$, thus obtaining two foliations $\cF^{cs,*}_{x,N}$, $\cF^{cu,*}_{x,N}$ on each normal space $N(x)$, such that the following properties hold:
\begin{itemize}
	\item (Tangent to $\beta$-cones) Each leaf of $\cF^{cs,*}_{x,N}$ is the graph of a $C^1$ function $h_x^E:N^{cs}(x)\to N^{cu}(x)$ with $\|Dh^E_x\|_{C^0}<\beta$; similarly, each leaf of $\cF^{cu,*}_{x,N}$ is the graph of a $C^1$ function $h_x^F:N^{cu}(x)\to N^{cs}(x)$ with $\|Dh^F_x\|_{C^0}<\beta$. In other words, the leaves of $\cF^{\xi,*}_{x,N}$ ($\xi=cs, cu$) are tangent to the $\beta$-cone $C^{\xi}_{\beta}$.
	\item (Invariance) For each $y\in N(x)$, it holds that 
	\[\tilde{P}^*_{1,x}(\cF^{\xi,*}_{x,N}(y))=\cF^{\xi,*}_{x_1,N}(\tilde{P}^*_{1,x}(y)),\quad\xi=cs, cu.\] 
	\item (Local product structure) For each $y\in N(x)$, there exist a unique $y^E\in\cF^{cs,*}_{x,N}(x)$ and a unique $y^F\in\cF^{cu,*}_{x,N}(x)$ such that 
	\[\{y\}=\cF^{cu,*}_{x,N}(y^E)\pitchfork \cF^{cs,*}_{x,N}(y^F).\]
\end{itemize}
We see from the local product structure that for each $x\in \Lambda\setminus\Sing(X)$, the foliations $\cF^{\xi,*}_{x,N}$ ($\xi=cs, cu$) form a coordinate system of $N(x)$. For any $y,y'\in N(x)$, let us denote $[y,y']_N$ to be the unique intersection of $\cF^{cu,*}_{x,N}(y)$ and $\cF^{cs,*}_{x,N}(y')$, i.e. 
\[\{[y,y']_N\}=\cF^{cu,*}_{x,N}(y)\pitchfork \cF^{cs,*}_{x,N}(y').\]
Then there is a constant $\rho_0\in (0,\overline\rho_0)$ such that for any $y,y'\in N_{\rho_0}(x)$, it holds
\[[y,y']_N\in N_{\overline\rho_0K_0^{-1}}(x).\]

By the construction of the map $\tilde{P}^*_{1,x}$, the dynamics near the base point is given by the scaled sectional Poincar\'e map $P^*_{1,x}$, which has a domain $N_{\rho_0K_0^{-1}}(x)$ of uniform size. Therefore, for the given $\beta>0$ and $\iota>0$, we may assume without loss of generality that $\rho_0>0$ is small enough such that 
\begin{equation}\label{eq.assumption-on-rho0}
	\rho_0K_0^{-1}<\nu.
\end{equation}
Then, we can rescale the foliations $\cF^{\xi,*}_{x,N}$ ($\xi=cs, cu$) by the flow speed $|X(x)|$ and push them to the manifold $M$ via the exponential map $\exp_x$. We shall obtain in this way foliations that reflect the dynamics of the sectional Poincar\'e map on the manifold.

Precisely, let 
\[S_x:N(x)\to N(x),\quad S_x(u)=|X(x)|u,\]
and define
\[\cF^{\xi}_{x,\cN}=\exp_x\left(S_x\left(\cF^{\xi,*}_{x,N}\right)\right),\quad \xi=cs, cu.\]
Of course, these foliations only make sense in the neighborhood $\cN_{\varrho}(x)$, where $\varrho>0$ is the injectivity radius of the exponential map. 
For our purpose, we consider these foliations only on $\cN_{\rho_0|X(x)|}(x)$, i,e. for each $y\in \cN_{\rho_0|X(x)|}(x)$, let $\tilde{y}=S_x^{-1}\circ\exp_{x}^{-1}(y)$ and 
\begin{equation}\label{eq.ff-construction}
	\cF^{\xi}_{x,\cN}(y)=\exp_x\left(S_x\left(\cF^{\xi,*}_{x,N}(\tilde{y})\cap N_{\rho_0}(x)\right)\right),\quad \xi=cs, cu.
\end{equation}
%Observe that the preimage of each $\cF^{\xi}_{x,\cN}(y)$ ($\xi=cs, cu$) under the exponential map $\exp_x$ remains tangent to the $\alpha$-cone $C^{\xi}_{\alpha}$. 

\begin{proposition}\label{prop.ff-regular}
The following properties hold for the foliations $\cF^{cs}_{x,\cN}$ and $\cF^{cu}_{x,\cN}$: 
\begin{itemize}
	\item (Tangent to $\beta$-cones) The preimage of each $\cF^{\xi}_{x,\cN}(y)$ ($\xi=cs, cu$) under the exponential map $\exp_x$ is tangent to the $\beta$-cone $C^{\xi}_{\beta}$. 
	\item (Local invariance) For any $y\in \cN_{\rho_0K_0^{-1}|X(x)|}(x)$ and $\xi=cs, cu$, one has
	\[
		\cP_{1,x}\left(\cF^{\xi}_{x,\cN}(y)\cap\cN_{\rho_0K_0^{-1}|X(x)|}(x)\right)\subset\cF^{\xi}_{x_1,\cN}(\cP_{1,x}(y)),\quad \text{and}\]
		\[\cP_{1,x}^{-1}\left(\cF^{\xi}_{x_1,\cN}(y)\cap\cN_{\rho_0K_0^{-1}|X(x_1)|}(x_1)\right)\subset\cF^{\xi}_{x,\cN}(\cP_{1,x}^{-1}(y)).\]
		
	\item (Local product structure) For any $y\in \cN_{\rho_0|X(x)|}(x)$, there is a unique $y^E\in\cF^{cs}_{x,\cN}(x)$ and a unique $y^F\in \cF^{cu}_{x,\cN}(x)$ such that 
	\[\{y\}=\cF^{cu}_{x,\cN}(y^E)\pitchfork\cF^{cs}_{x,\cN}(y^F).\]
\end{itemize}
\end{proposition}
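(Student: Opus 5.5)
The plan is to transfer each property of the foliations $\cF^{\xi,*}_{x,N}$ on $N(x)$ to $\cF^{\xi}_{x,\cN}$ through the map $\exp_x\circ S_x$. Since $S_x$ is a scalar multiple of the identity on $N(x)$ and $\exp_x$ is a diffeomorphism on $N_{\rho_0|X(x)|}(x)$, each statement will be checked on the normal space and then pushed forward. The conjugacy
\[
\cP_{1,x}=\exp_{x_1}\circ S_{x_1}\circ P^*_{1,x}\circ S_x^{-1}\circ\exp_x^{-1}
\]
on $\cN_{\rho_0K_0^{-1}|X(x)|}(x)$ will be the basic identity used throughout. It follows directly from the definitions $P_{1,x}=\exp_{x_1}^{-1}\circ\cP_{1,x}\circ\exp_x$ and $P^*_{1,x}(y)=P_{1,x}(y|X(x)|)/|X(x_1)|$.

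\emph{Tangency to cones.} The map $S_x$ is linear and a homothety, and it preserves the decomposition $N^{cs}(x)\oplus N^{cu}(x)$. Hence it maps the graph of $h^E_x$ to the graph of $u\mapsto |X(x)|\,h^E_x(u/|X(x)|)$, and this function has the same derivative bound $\beta$. The same holds for $h^F_x$. Therefore the preimage under $\exp_x$ of each leaf $\cF^{\xi}_{x,\cN}(y)$ is a piece of a graph tangent to the translated cone $C^{\xi}_\beta$, which is the first property.

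\emph{Local invariance.} Take $y\in\cN_{\rho_0K_0^{-1}|X(x)|}(x)$ and set $\tilde y=S_x^{-1}\exp_x^{-1}y\in N_{\rho_0K_0^{-1}}(x)$. By \eqref{eq.assumption-on-rho0} we have $\rho_0K_0^{-1}<\nu$, so on this ball $\tilde P^*_{1,x}=P^*_{1,x}$. The invariance of the Hadamard--Perron foliations gives
\[
P^*_{1,x}\bigl(\cF^{\xi,*}_{x,N}(\tilde y)\cap N_{\rho_0K_0^{-1}}(x)\bigr)\subset \cF^{\xi,*}_{x_1,N}\bigl(P^*_{1,x}(\tilde y)\bigr).
\]
Lemma~\ref{lem.liao-size} (with $\rho=\rho_0$) says the image lies in $N_{\rho_0}(x_1)$. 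Applying $\exp_{x_1}\circ S_{x_1}$ and using the conjugacy identity then yields the forward inclusion; the truncation by $N_{\rho_0}(x_1)$ in \eqref{eq.ff-construction} causes no loss. The backward inclusion uses $(\tilde P^*_{1,x})^{-1}$ on the region where it coincides with $(P^*_{1,x})^{-1}$, together with invariance of the foliation under the inverse map. This follows because $\tilde P^*_{1,x}$ is a diffeomorphism (it is $C^1$-close to the invertible $\psi^*_{1,x}$) and the leaves are mapped bijectively onto leaves.

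The main obstacle is this domain bookkeeping for the inverse. One must ensure that $(P^*_{1,x})^{-1}$ maps $N_{\rho_0K_0^{-1}}(x_1)$ into the region $|\cdot|<\nu$ where it agrees with $\tilde P^*_{1,x}$. To handle this, I would first try applying Lemma~\ref{lem.liao-size} to the backward flow, enlarging $K_0$ if necessary, and shrinking $\rho_0$ further so that \eqref{eq.assumption-on-rho0} still holds.

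\emph{Local product structure.} This is inherited from the global product structure on $N(x)$ via the bijection $\exp_x\circ S_x$. It holds provided the intersection points $\tilde y^E,\tilde y^F$ and the bracket $[\cdot,\cdot]_N$ remain inside $N_{\rho_0}(x)$, so that the truncated leaves in \eqref{eq.ff-construction} still contain them. This follows from the transversality of graphs with slopes less than $\beta$: for $\beta$ small, $|\tilde y^E|,|\tilde y^F|\le (1+O(\beta))|\tilde y|$. To absorb this factor, I would either slightly shrink the admissible radius for $y$ or rely on the choice of $\rho_0$ made right after the bracket definition.
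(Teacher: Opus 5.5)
Your proposal is correct and is essentially the paper's proof: every property is transferred from $\cF^{\xi,*}_{x,N}$ through $\exp_x\circ S_x$ using the conjugacy $\cP_{1,x}\circ\exp_x\circ S_x=\exp_{x_1}\circ S_{x_1}\circ\tilde P^*_{1,x}$ on $N_{\rho_0K_0^{-1}}(x)$. Your extra domain bookkeeping fills in what the paper leaves implicit: the backward holonomy must land where $\tilde P^*_{1,x}=P^*_{1,x}$, which the backward version of Lemma~\ref{lem.liao-size} guarantees once $\rho_0<\nu$, and $y^E,y^F$ must lie in the truncated leaves.

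One small correction concerns the bound $|\tilde y^E|,|\tilde y^F|\le(1+O(\beta))|\tilde y|$, which holds only if $N^{cs}$ and $N^{cu}$ are nearly orthogonal; in general the constant depends on the uniformly positive angle of the dominated splitting. So it is your first option (shrinking the admissible radius of $y$ by that constant) that closes the product-structure step, since the choice of $\rho_0$ after the bracket definition only places $y^E,y^F$ in $\cN_{\overline\rho_0K_0^{-1}|X(x)|}(x)$, not in $\cN_{\rho_0|X(x)|}(x)$.
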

\begin{proof}
	The first item follows from the fact that the foliations $\cF^{\xi,*}_{x,N}(y)$ are tangent respectively to $\beta$-cones $C^{\xi}_{\beta}$ and that the scaling map $S_x$ preserves cones. The local invariance property follows from the corresponding property of the foliations $\cF^{\xi,*}_{x,N}$ ($\xi=cs, cu$), and the fact that 
	\[\cP_{1,x}\circ(\exp_{x}\circ S_x)=(\exp_{x_1}\circ S_{x_1})\circ\tilde{P}^*_{1,x},\]
	restricted to the domain $N_{\rho_0K_0^{-1}}(x)$. 
	The local product structure is a consequence of the same property of the foliations $\cF^{\xi,*}_{x,N}$. Note that our choice of $\rho_0>0$ ensures that $y^E$ and $y^F$ are contained in $\cN_{\overline\rho_0K_0^{-1}|X(x)|}(x)$.
\end{proof}

For any $x\in\Lambda\setminus\Sing(X)$ and $y\in\cN_{\rho_0|X(x)|}(x)$, we have that 
\[\{y^F\}=\cF^{cs}_{x,\cN}(y)\pitchfork\cF^{cu}_{x,\cN}(x).\]
Let $d_x^E(y)$ be the distance between $y^F$ and $y$ along the leaf $\cF^{cs}_{x,\cN}(y)$, i.e. 
\begin{equation}\label{eq.E-length-def}
	d^E_x(y):=d_{\cF^{cs}_{x,\cN}}(y^F,y).
\end{equation}
Similarly, let $d_x^F(y)$ be the distance between $x$ and $y^F$ along the leaf $\cF^{cu}_{x,\cN}(x)$, i.e.
\begin{equation}\label{eq.F-length-def}
	d^F_x(y):=d_{\cF^{cu}_{x,\cN}}(x,y^F).
\end{equation}
We refer to $d^E_x(y)$ and $d^F_x(y)$ as the {\em $E$-length} and {\em $F$-length} of $y$, respectively. 

We can consider also the following distances:
\[d_{\cF^{cs}_{x,\cN}}(x,y^E),\quad d_{\cF^{cu}_{x,\cN}}(y^E,y),\]
where $y^E$ is the unique intersection of $\cF^{cs}_{x,\cN}(x)$ and $\cF^{cu}_{x,\cN}(y)$. 
\begin{corollary}\label{cor.switch-base}
	For any $\beta>0$ small enough, the following properties hold:
	\begin{itemize}
		\item if $d_x^E(y)\ge d^F_x(y)$, then $d_{\cF^{cs}_{x,\cN}}(x,y^E)\ge 0.99 d_{\cF^{cu}_{x,\cN}}(y^E,y)$; 
		\item if $d_x^F(y)\ge d^E_x(y)$, then $d_{\cF^{cu}_{x,\cN}}(y^E,y)\ge 0.99 d_{\cF^{cs}_{x,\cN}}(x,y^E)$.
	\end{itemize}
\end{corollary}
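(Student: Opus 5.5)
The plan is to reduce both items to a statement about the geometry of the normal space $N(x)$, where the foliations $\cF^{\xi,*}_{x,N}$ are graphs of functions with $C^1$ norm below $\beta$. Distances along leaves and straight-line distances between the relevant points then differ only by factors $1+O(\beta)$. Pulling back through $\exp_x\circ S_x$ introduces only a uniform scaling by $|X(x)|$, which multiplies all four lengths by the same factor. The exponential map on $\cN_{\rho_0|X(x)|}(x)$ is $C^1$-close to an isometry, so it distorts ratios of lengths only by a factor that can be absorbed by shrinking $\rho_0$. It therefore suffices to work in $N(x)\cong N^{cs}(x)\oplus N^{cu}(x)$ with the four points $0$, $y^E$, $y^F$, $y$.

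In $N(x)$, write $y=(a,b)$ with $a\in N^{cs}$ and $b\in N^{cu}$, and record the following estimates.
\begin{itemize}
\item The point $y^E$ lies on the cs-leaf through $0$, a graph over $N^{cs}$ of slope $<\beta$, so $y^E=(a_E,h(a_E))$ with $|h(a_E)|\le\beta|a_E|$.
\item The segment from $y^E$ to $y$ lies in a cu-leaf, so $|a-a_E|\le \beta|b-h(a_E)|$.
\item Symmetrically, $y^F=(g(b_F),b_F)$ with $|g(b_F)|\le \beta|b_F|$, and $|b-b_F|\le\beta|a-g(b_F)|$.
\item Leaf lengths are comparable to norms of projections: $d_x^E(y)\approx|a-g(b_F)|$, $d_x^F(y)\approx|b_F|$, $d(x,y^E)\approx|a_E|$ and $d(y^E,y)\approx|b-h(a_E)|$, each up to factors $(1\pm C\beta)$.
\end{itemize}
These estimates are the core computation; the approximation uses that a graph of slope $<\beta$ has arclength between $|\Delta|$ and $\sqrt{1+\beta^2}\,|\Delta|$ in its base coordinate. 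One also needs the norm on $N(x)$ to be comparable to $\max(|a|,|b|)$. This holds since the splitting is dominated, hence has angle uniformly bounded away from zero, possibly after passing to an adapted norm. I would rather avoid the angle dependence by measuring everything with the norm $|v^{cs}|+|v^{cu}|$ in the cone definitions, which is how the cones are defined.

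For the first item, assume $d_x^E(y)\ge d_x^F(y)$, so that $|a|$ essentially dominates $|b|$: from the estimates, $|a|\ge(1-C\beta)|b|$ up to $O(\beta)$ cross terms. Then $|a_E|\ge |a|-\beta|b-h(a_E)|$ and $|b-h(a_E)|\le |b|+\beta|a_E|$. Combining these gives
\[
|a_E|\ge (1-C'\beta)\,|b-h(a_E)|,
\]
which yields $d(x,y^E)\ge 0.99\,d(y^E,y)$ once $\beta$ is small. The second item is symmetric, exchanging the roles of cs and cu.

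The main obstacle is bookkeeping the comparability constants, in particular the non-orthogonality of $N^{cs}$ and $N^{cu}$ and the distortion of $\exp_x$. Both must be uniform over $x\in\Lambda\setminus\Sing(X)$, which is why everything is done in scaled coordinates where all sizes are uniform. The key point is that every inequality is a comparison of the form $\ge(1-O(\beta))$, so the constant $0.99$ is reached by choosing $\beta$ and then $\rho_0$ small.
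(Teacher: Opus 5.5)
Your proposal is correct and follows the paper's route. Like the paper, you pass to the foliations $\cF^{\xi,*}_{x,N}$ on $N(x)$, use the $\beta$-cone tangency to get the comparison with a constant such as $0.999$, note that $S_x$ preserves ratios exactly, and absorb the distortion of $\exp_x$ on $B_{\rho_0|X(x)|}(x)$ by shrinking $\rho_0$; your graph-coordinate computation makes explicit the step the paper covers with ``one can see'', and your worry about the angle between $N^{cs}$ and $N^{cu}$ is unnecessary, since by the triangle inequality a curve $c+h(c)$ in a leaf has Riemannian speed within a factor $1\pm\beta$ of $|c'|$, so no adapted norm is needed.
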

\begin{proof}
	In view of \eqref{eq.ff-construction}, we consider firstly the foliations $\cF^{cs,*}_{x,N}$ and $\cF^{cu,*}_{x,N}$ on the normal space. Let 
	\[d^{E,*}_{x}(\tilde{y})=d_{\cF^{cs,*}_{x,N}}(\tilde{y}^F,\tilde{y}), \quad d^{F,*}_{x}(\tilde{y})=d_{\cF^{cu,*}_{x,N}}(x,\tilde{y}^F).\]
	By construction, the leaves of $\cF^{\xi,*}_{x,N}$ are tangent to the $\beta$-cone $C^{\xi}_{\beta}$, for $\xi=cs, cu$. Reducing $\beta>0$ if necessary, one can see that if $d^{E,*}_x(\tilde{y})\ge d^{F,*}_x(\tilde{y})$, then 
	\[d_{\cF^{cs,*}_{x,N}}(x,\tilde{y}^E)\ge 0.999 d_{\cF^{cu,*}_{x,N}}(\tilde{y}^E,y).\]
	Note that this relation is preserved by the scaling map $S_x$. Also, the differential of the exponential map $\exp_x$, restricted to $B_{\rho_0|X(x)|}(x)$, is arbitrarily close to the identity map, where $\rho_0>0$ is small enough assuming $\beta$ is small. See the assumption \eqref{eq.assumption-on-rho0}. 
	Now, the first item of the corollary follows from the previous inequality and \eqref{eq.ff-construction}. The second item can be proved similarly.
\end{proof}

\subsection{A coordinate system near singularities}

Let $\sigma$ be a singularity in $\Lambda$, which is either Lorenz-like or reversed Lorenz-like. 
We assume firstly that $\sigma$ is Lorenz-like. In particular, there is a dominated splitting $T_{\sigma}M=E^{ss}_{\sigma}\oplus E^c_{\sigma}\oplus E^u_{\sigma}$ for the tangent flow such that $\dim E^c_{\sigma}=1$ and $E^{ss}_{\sigma}\oplus E^c_{\sigma}$ corresponds to the stable subspace. 
Without loss of generality, let us assume that $E^{ss}_{\sigma}$, $E^c_{\sigma}$ and $E^u_{\sigma}$ are pairwise orthogonal. We will denote $E^{cs}_{\sigma}:=E^{ss}_{\sigma}\oplus E^c_{\sigma}$ and $E^{cu}_{\sigma}:=E^c_{\sigma}\oplus E^u_{\sigma}$.
Given $\alpha>0$, one can define $\alpha$-cones around each subbundle $E^{ss}_{\sigma}, E^c_{\sigma}, E^u_{\sigma}$, $E^{cs}_{\sigma}$ and $E^{cu}_{\sigma}$, and extend the cones to $T_{\sigma}M$ by translation. 

On the tangent space $T_{\sigma}M$, we consider a $C^1$ diffeomorphism $g_{\sigma}$ such that 
\begin{equation*}
	g_{\sigma}(u)=\left\{\begin{array}{ll}
		\exp_{\sigma}^{-1}\circ f_1\circ\exp_{\sigma}, & |u|<\nu\\
		Df_1|_{T_{\sigma}M}, & |u|>K\nu
	\end{array}\right.
\end{equation*}
where $\nu>0$ and $K>1$. 
Note that $Df_1|_{T_{\sigma}M}=Df_1|_{T_{\sigma}M}$. For any $\iota>0$, we can take $\nu$ small and $K$ large enough such that $\|g_{\sigma}-Df_1|_{T_{\sigma}M}\|_{C^1}<\iota$. 

Now, given $\alpha>0$, one can require $\iota>0$ to be small enough and apply the Hadamard-Perron Theorem \cite[Theorem 6.2.8]{KH} to the map $g_{\sigma}$, obtaining foliations $\cF^{\xi}_{\sigma}$ ($\xi=ss,c,u,cs,cu$) with the following properties:
\begin{itemize}
	\item Each leaf of $\cF^{\xi}_{\sigma}$  ($\xi=ss,c,u,cs,cu$) is tangent to the corresponding $\alpha$-cones. 
	\item For any $z\in T_{\sigma}M$, one has 
	\[g_{\sigma}(\cF^{\xi}_{\sigma}(z))=\cF^{\xi}_{\sigma}(g_{\sigma}(z)),\quad \xi =ss,c,u,cs,cu.\]
	\item The foliations $\cF^{ss}_{\sigma}$ and $\cF^{cu}_{\sigma}$ form a product structure: for any $y,z\in T_{\sigma}M$, $\cF^{ss}_{\sigma}(y)$ and $\cF^{cu}_{\sigma}(z)$ intersect transversely at a unique point. The same holds for the pair of foliations $\cF^{cs}_{\sigma}$ and $\cF^{u}_{\sigma}$.
	\item $\cF^{ss}_{\sigma}$ and $\cF^c_{\sigma}$  subfoliate of $\cF^{cs}_{\sigma}$, and $\cF^c_{\sigma}$, $\cF^u_{\sigma}$ subfoliate $\cF^{cu}_{\sigma}$. Moreover, inside each leaf of $\cF^{cs}_{\sigma}$, the foliations $\cF^{ss}_{\sigma}$ and $\cF^c_{\sigma}$ form a product structure; similarly, inside each leaf of $\cF^{cu}_{\sigma}$, the foliations $\cF^c_{\sigma}$, $\cF^u_{\sigma}$ form a product structure. 
\end{itemize}

One then projects these foliations in $T_{\sigma}M(\nu)$ to the manifold $M$ via the exponential map $\exp_{\sigma}$, obtaining locally invariant foliations in the neighborhood $B_{\nu}(\sigma)$ of $\sigma$. Abusing notations, we shall denote also by $\cF^{\xi}_{\sigma}$ ($\xi=ss,c,u,cs,cu$) for the foliations on $B_{\nu}(\sigma)$. Note that the $\alpha$-cones on $T_{\sigma}M(\nu)$ can be projected to $B_{\nu}(\sigma)$ via the exponential map $\exp_{\sigma}$. We denote these cones by $C_{\sigma,\alpha}^{\xi}$ ($\xi=ss,c,u,cs,cu$).   
\begin{proposition}\label{prop.ff-singularity}
	Let $\sigma\in\Lambda\cap\Sing(X)$ be a Lorenz-like singularity. Given $\alpha>0$, there exist $r^*>r_0>0$ such that the neighborhood $B_{r^*}(\sigma)$ admits foliations $\cF^{\xi}_{\sigma}$ ($\xi=ss,c,u,cs,cu$) with the following properties:
	\begin{itemize}
		\item For each $x\in B_{r^*}(\sigma)$ and $\xi=ss,c,u,cs,cu$, the leaf $\cF^{\xi}_{\sigma}(x)$  is $C^1$ and tangent to the $\alpha$-cone $C_{\sigma,\alpha}^{\xi}$.
		\item For each $x\in B_{r_0}(\sigma)$ and $\xi=ss,c,u,cs,cu$, one has
		\[f_1(\cF^{\xi}_{\sigma}(x)\cap B_{r_0}(\sigma))\subset \cF^{\xi}_{\sigma}(x_1),\quad f_1^{-1}(\cF^{\xi}_{\sigma}(x)\cap B_{r_0}(\sigma))\subset \cF^{\xi}_{\sigma}(x_{-1}).\]
		\item The foliations $\cF^{ss}_{\sigma}$, $\cF^{cu}_{\sigma}$ form a local product structure on $B_{r_0}(\sigma)$: for any $y,z\in B_{r_0}(\sigma)$, $\cF^{ss}_{\sigma}(y)$ and $\cF^{cu}_{\sigma}(z)$ intersect transversely at a unique point. The same holds for the pair of foliations $\cF^{cs}_{\sigma}$ and $\cF^{u}_{\sigma}$.
		\item $\cF^{ss}_{\sigma}$ and $\cF^c_{\sigma}$  subfoliate of $\cF^{cs}_{\sigma}$, and $\cF^c_{\sigma}$, $\cF^u_{\sigma}$ subfoliate $\cF^{cu}_{\sigma}$. Moreover, inside each leaf of $\cF^{cs}_{\sigma}$, the foliations $\cF^{ss}_{\sigma}$ and $\cF^c_{\sigma}$ form a local product structure on $B_{r_0}(\sigma)$; similarly, inside each leaf of $\cF^{cu}_{\sigma}$, the foliations $\cF^c_{\sigma}$, $\cF^u_{\sigma}$ form a local product structure on $B_{r_0}(\sigma)$. 
	\end{itemize}
\end{proposition}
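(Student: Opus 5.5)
The plan is to realize everything in $T_\sigma M$ for the globalized map $g_\sigma$ and then transfer to $M$ through $\exp_\sigma$. First fix $\alpha>0$. Using the dominated splitting $E^{ss}_\sigma\oplus E^c_\sigma\oplus E^u_\sigma$ of the hyperbolic linear map $A=Df_1|_{T_\sigma M}$, choose $\iota>0$ so small that every $C^1$ map within $\iota$ of $A$ preserves the $\alpha$-cones around $E^{cu}_\sigma, E^u_\sigma$ in forward time and around $E^{cs}_\sigma,E^{ss}_\sigma$ in backward time, with uniform domination rates. Then choose $\nu,K$ so that the extension $g_\sigma$ satisfies $\|g_\sigma-A\|_{C^1}<\iota$. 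This is possible by the standard bump-function argument, since $\exp_\sigma^{-1}\circ f_1\circ\exp_\sigma$ is $C^1$ with derivative $A$ at $0$.

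Next I will apply the Hadamard--Perron theorem \cite[Theorem 6.2.8]{KH} to $g_\sigma$ for each of the dominated splittings $E^{ss}\oplus E^{cu}$, $E^{cs}\oplus E^u$, and $E^{c}\oplus E^{u}$ restricted appropriately. This gives globally defined $g_\sigma$-invariant $C^1$ foliations of $T_\sigma M$ whose leaves are graphs of Lipschitz-$\alpha$ functions over the corresponding subspace. That is the first bullet at the tangent-space level.

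\begin{itemize}
\item Global product structure in $T_\sigma M$ follows because the leaves are graphs over complementary subspaces with small slopes. A contraction-mapping argument on the graph functions gives a unique transverse intersection.
\item The center foliation $\cF^c_\sigma$ will be obtained as the intersection $\cF^c_\sigma(z)=\cF^{cs}_\sigma(z)\cap\cF^{cu}_\sigma(z)$. This intersection is invariant, tangent to the $\alpha$-cone of $E^c$, and forces the subfoliation and product structure claims inside leaves.
\item $\cF^{ss}_\sigma$ subfoliates $\cF^{cs}_\sigma$ by the usual uniqueness characterization: points in the same strong stable leaf converge at the fastest rate, so they lie in the same center-stable leaf.
\end{itemize}

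Then I will push everything to $M$. Pick $r^*<\nu$, with $\nu$ below the injectivity radius, so that $D\exp_\sigma$ is close to the identity on $T_\sigma M(r^*)$. The images of the leaves are then $C^1$ and tangent to the projected cones $C^\xi_{\sigma,\alpha}$; shrinking cone apertures slightly absorbs the distortion.

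For local invariance, choose $r_0<r^*$ with $f_{\pm1}(B_{r_0}(\sigma))\subset B_{r^*}(\sigma)$ and $\exp_\sigma^{-1}(f_{\pm 1}(B_{r_0}(\sigma)))\subset\{|u|<\nu\}$. On that set $g_\sigma^{\pm1}$ coincides with the conjugated time-one maps, so invariance of the tangent-space foliations gives the inclusions in the second bullet.

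The local product structure on $B_{r_0}(\sigma)$ also needs the intersection points to remain in $B_{r^*}(\sigma)$. This holds if $r_0$ is small relative to $r^*$, since leaves with slopes at most $\alpha$ through two points at distance less than $2r_0$ meet within distance $C(\alpha)r_0$.

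The main obstacle I expect is the center foliation. Hadamard--Perron does not directly produce a one-dimensional center foliation that subfoliates both $\cF^{cs}_\sigma$ and $\cF^{cu}_\sigma$ with $C^1$ leaves. Two things must be verified: that $\cF^{cs}_\sigma(z)\cap\cF^{cu}_\sigma(z)$ is a $C^1$ curve, and that these curves depend continuously on $z$ so that they form a genuine foliation.

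The first step I would try is transversality. Both foliations consist of graphs with small slopes over $E^{cs}$ and $E^{cu}$, whose sum is $T_\sigma M$, so the leaves intersect transversally in a $1$-dimensional $C^1$ submanifold tangent to the $E^c$-cone. Continuity in $z$ then comes from the continuous dependence of both leaves on $z$.

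Invariance of $\cF^c_\sigma$ is automatic from the invariance of $\cF^{cs}_\sigma$ and $\cF^{cu}_\sigma$. Since $\alpha$ is small, the product structures inside $cs$- and $cu$-leaves then follow from the same graph argument as the global product structure.
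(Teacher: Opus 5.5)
Your proposal is correct and takes the same route as the paper. Both arguments globalize $\exp_\sigma^{-1}\circ f_1\circ\exp_\sigma$ to $g_\sigma$, apply Hadamard--Perron in $T_\sigma M$, and push the leaves to $M$ by $\exp_\sigma$, with $r_0$ chosen small so that $g_\sigma^{\pm1}$ agrees with the conjugated time-$(\pm1)$ maps. Your construction of $\cF^c_\sigma$ as $\cF^{cs}_\sigma\cap\cF^{cu}_\sigma$, and your growth-rate argument for the subfoliations, just supply details that the paper takes as part of the Hadamard--Perron output.
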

\begin{proof}
	We take $r^*=\nu$ and $r_0\in (0,r^*)$ small enough. Note that restricted to $T_{\sigma}M(\nu)$, one has 
	\[\exp_{\sigma}\circ g_{\sigma}=f_1\circ \exp_{\sigma}.\]
	Then these properties of the foliations $\cF^{\xi}_{\sigma}$ ($\xi=ss,c,u,cs,cu$) follow immediately from that of the foliations on $T_{\sigma}M(\nu)$.
\end{proof}

Suppose now $\sigma$ is reversed Lorenz-like. Then there is a dominated splitting of the tangent bundle $T_{\sigma}M=E^s\oplus E^c\oplus E^{uu}$, where $\dim E^c=1$ and $E^c\oplus E^{uu}$ corresponds to the unstable subspace of $\sigma$. 
A similar construction gives locally invariant foliations $\cF^{\xi}_{\sigma}$ ($\xi=s,c,uu,cs,cu$)
in a small neighborhood $B_{r^*}(\sigma)$, with similar properties as in the preceding proposition.

\section{Estimates near singularities}\label{sect.est-sing}

In this section, we focus on the local dynamics near singularities of a multi-singular hyperbolic set $\Lambda$. The main purpose is to understand how expansion can be ensured for regular orbits segments passing by a singularity.

Let $\alpha>0$ be a constant, which will be chosen to be small enough. 
Since there are only finitely many singularities in $\Lambda$, we may assume that the constants $r^*>r_0>0$ given by Proposition \ref{prop.ff-singularity} are valid for all singularities in $\Lambda$. 
For each $r\in (0,r_0)$ and $\sigma\in\Sing(X)\cap\Lambda$, we define $W_r(\sigma)$ to be the set of points $x\in B_{r_0}(\sigma)$ which has an orbit segment $f_{[-t^-,t^+]}(x)$ containing $x$ such that it is contained entirely in $B_{r_0}(\sigma)$ and it intersects $B_r(\sigma)$. Beware that $W_r(\sigma)$ depends on the fixed constant $r_0>0$. 
For any $x\in W_r(\sigma)$, if $x\notin W_{loc}^u(\sigma)$, let $t_x^->0$ be the largest number such that $f_{(-t_x^-,0]}(x)\subset B_{r_0}(\sigma)$; and if $x\notin W_{loc}^s(\sigma)$, let $t_x^+>0$ be the largest number such that $f_{[0,t_x^+)}(x)\subset B_{r_0}(\sigma)$. Define 
\[\partial W^-_r(\sigma)=\{f_{-t_x^-}(x):\ x\in W_r(\sigma)\setminus W^u_{loc}(\sigma)\},\] 
and 
\[\partial W^+_r(\sigma)=\{f_{t_x^+}(x):\ x\in W_r(\sigma)\setminus W^s_{loc}(\sigma)\}.\] 
For a point $x\in \partial W^-_r(\sigma)\setminus W^s_{loc}(\sigma)$, let $t_x>0$ be the largest number such that $f_{(0,t_x)}(x)\subset B_{r_0}(\sigma)$. We write $x^+=f_{t_x}(x)$, which is contained in $\partial W^+_r(\sigma)$. 
For $\delta>0$ small, suppose $y\in \cN_{\rho_0|X(x)|}(x)$ such that, for some time reparametrization (i.e., an orientation-preserving homeomorphism of $\RR$) $\theta:\mathbb{R}\to\mathbb{R}$ with $\theta(0)=0$, it holds 
\begin{equation}\label{eq.delta-close-condition}
	d(f_t(x),f_{\theta(t)}(y))<\delta,\quad \forall t\in [0, t_x].
\end{equation}
Let $y^+=\cP_{x^+}(f_{\theta(t_x)}(y))$, which is contained in $\cN_{\rho_0|X(x^+)|}(x^+)$ (by reducing $\delta$ if necessary). 
Without loss of generality, we assume that $y^+=f_{t^+}(y)$ with $0<t_x\le t^+$. As a consequence of Proposition \ref{prop.ff-regular}, we have the $E$-length $d_x^E(y)$ and $F$-length $d_x^F(y)$ of $y$ on $\cN_{\rho_0|X(x)|}(x)$, as defined respectively by \eqref{eq.E-length-def} and \eqref{eq.F-length-def}. Similarly, we have the $E$-length $d_{x^+}^E(y^+)$ and the $F$-length $d_{x^+}^F(y^+)$ for $y^+$.
We will consider the following two cases:
\begin{enumerate}
	\item $d_{x^+}^E(y^+)\ge d^F_{x^+}(y^+)$, i.e. upon leaving $W_r(\sigma)$ the $E$-length of $y^+$ is larger than or equal to its $F$-length;
	\item $d_{x}^F(y)\ge d^E_{x}(y)$, i.e. upon entering $W_r(\sigma)$ the $F$-length of $y$ is larger than or equal to its $E$-length.
\end{enumerate}

In the first case, we will show backward expansion on the $E$-length and it must be larger than or equal to $F$-length when the orbit of $y$ enters $W_r(\sigma)$, as given by the following lemma.
\begin{lemma}\label{lem.E-large}
For all $r_0>0$ sufficiently small, there exist constants $\bar{r}\in(0,r_0)$,  such that for $0<r<\bar{r}$, there exists 	$\delta_0>0$ such that for all $0<\delta<\delta_0$, the following property holds for every $\sigma\in\Sing(X)\cap\Lambda$:

Let $x\in(\partial W^-_r(\sigma)\setminus W^s_{loc}(\sigma))\cap\Lambda$ and $x^+=f_{t_x}(x)$. Suppose $y\in\cN_{\rho_0|X(x)|}(x)$ such that \eqref{eq.delta-close-condition} holds for some continuous, strictly increasing function $\theta:\RR\to\RR$ (not necessarily surjective) with $\theta(0) = 0$.  Suppose that $y^+=\cP_{x^+}(f_{\theta(t_x)}(y))$ and $y^+=f_{t^+}(y)$ with $0<t_x\le t^+$, and moreover, 
\begin{equation}\label{eq.E-large-assumption}
	d^E_{x^+}(y^+)\ge d^F_{x^+}(y^+), \quad (\text{$y^+$ has large $E$-length at $x^+$})
\end{equation}
then we have the following conclusions:
\begin{enumerate}
	\item ($y$ has large $E$-length at $x$) $d^E_x(y)\ge d^F_x(y)$.
	\item (backward exponential expansion on $E$-length) there exists $\lambda_{\sigma}>1$ such that
	\[d^E_x(y)\ge \lambda_{\sigma}^{t_x}d^E_{x^+}(y^+).\]
\end{enumerate}
\end{lemma}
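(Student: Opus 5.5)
The idea is to transfer the whole passage through $B_{r_0}(\sigma)$ into the fake-foliation coordinates of Proposition \ref{prop.ff-singularity}. We then compare these with the normal-space coordinates of Proposition \ref{prop.ff-regular} at the entry point $x$ and the exit point $x^+$. I treat the Lorenz-like case; the reversed Lorenz-like case follows by applying the same argument to $-X$ with the roles of $E$ and $F$ exchanged (the statement here concerns backward expansion of $E$-lengths, which in the reversed case comes from $E^s$).

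\textbf{Step 1: entry and exit geometry.} Since $x\in\Lambda$ and $W^{ss}(\sigma)\cap\Lambda=\{\sigma\}$, for $r$ small the orbit of $x$ passes close to $W^s_{loc}(\sigma)$ and leaves close to $W^u_{loc}(\sigma)$. The entry point $x$ lies near the $E^c$ direction, so $X(x)$ is almost parallel to $E^c_\sigma$ and $N(x)$ is close to $E^{ss}_\sigma\oplus E^u_\sigma$. The exit point $x^+$ lies near $E^u_\sigma$, so $N(x^+)$ is close to $E^{ss}_\sigma\oplus E^c_\sigma\oplus(E^u_\sigma\ominus X)$.

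Because $\dim E^{ss}_\sigma=\dim N^{cs}$, continuity of the dominated splitting gives the following identifications:
\begin{itemize}
\item at $x$, $N^{cs}(x)$ is close to $E^{ss}_\sigma$ and $N^{cu}(x)$ to $E^u_\sigma$;
\item at $x^+$, $N^{cs}(x^+)$ is close to $E^{ss}_\sigma$ (or possibly a graph over $E^{ss}\oplus E^c$ minus the flow direction) and $N^{cu}(x^+)$ to $E^c\oplus E^{u}$ modulo $X$.
\end{itemize}
This is the standard argument of \cite{PYY25,PYY25a}. With $\beta,\alpha$ small, the leaves $\cF^{cs}_{x,\cN}$ at $x$ and $x^+$ are therefore $C^1$-close to $\cF^{ss}_\sigma$-plaques, and the leaves $\cF^{cu}_{x,\cN}$ are close to $\cF^{cu}_\sigma$-plaques intersected with the normal sections.

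\textbf{Step 2: tracking the $\cF^{ss}_\sigma$ and $\cF^{cu}_\sigma$ components.} By \eqref{eq.delta-close-condition}, the point $y$ stays $\delta$-close to $f_t(x)$ for $t\in[0,t_x]$, so the segment $f_{[0,t^+]}(y)$ stays inside $B_{r^*}(\sigma)$ once $\delta_0\ll r$. Write $y$ and $f_{t^+}(y)$ in the product structure $\cF^{ss}_\sigma\times\cF^{cu}_\sigma$ relative to the orbit of $x$. By the local invariance in Proposition \ref{prop.ff-singularity}, iterating $f_1$ (about $t_x$ times, plus a bounded correction for the fractional time and for the sliding between $t_x$ and $t^+$) preserves both foliations.

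Domination $E^{ss}\prec E^{cu}$ has rate $e^{-\eta'}$ per unit time, where $\eta'>0$ is roughly $\lambda^c-\lambda^{ss}$. Hence along the segment the $ss$-distance contracts at rate at least $\lambda_\sigma^{-1}$ relative to the $cu$-distance, and in absolute terms as well, since $E^{ss}$ is uniformly contracted.

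The $E$-lengths must be measured after projecting to the normal sections. The projection at $x^+$ costs a factor comparable to $|X(x^+)|\approx r_0$, and the projection at $x$ costs a factor comparable to $|X(x)|\approx r_0$; both are bounded because $x,x^+\in\partial B_{r_0}$. This yields $d^E_{x^+}(y^+)\le C\lambda_\sigma'^{-t_x}d^E_x(y)$. Since $t_x\to\infty$ as $r\to0$, the constant $C$ is absorbed by choosing $\bar r$ small and replacing $\lambda'_\sigma$ by a slightly smaller $\lambda_\sigma>1$. This proves conclusion (2).

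\textbf{Step 3: comparison of lengths, giving conclusion (1).} For the $F$-direction, $cu$-distances cannot shrink by more than a factor $e^{\lambda^c t}$ forward. More to the point, the ratio of $cu$-distance to $ss$-distance grows forward by at least $e^{\eta' t_x}$. So if at $x$ we had $d^F_x(y)>d^E_x(y)$, then at $x^+$ the $cu$-component would exceed the $ss$-component by a large factor.

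The delicate point is that the $cu$-component at $x^+$ includes an $E^c$ part, which is partly in the flow direction and is discarded by $\cP_{x^+}$. I would handle this by using $\sv(\sigma)>0$ (the Lorenz-like condition $-\lambda^c<\lambda^u$), which guarantees that the $u$-part of the displacement transverse to the orbit dominates the $c$-part. Concretely, the transverse $cu$-component at exit is at least $e^{(\lambda^u+\lambda^c)t_x}$ times its entry value up to constants, which is again much larger than the $ss$-growth. This contradicts \eqref{eq.E-large-assumption}, so $d^E_x(y)\ge d^F_x(y)$, using Corollary \ref{cor.switch-base} to pass between the two product decompositions.

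I expect this step, controlling the flow-direction loss at exit and making the comparison between the singular-chart and normal-section foliations uniform, to be the main obstacle. For it I would rely on the estimates of \cite[Section 4]{PYY25a}.
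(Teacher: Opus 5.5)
Your route rearranges the paper's logic. The paper proves both conclusions by backward domination along the triples $(D^s_k,\ell^c_k,D^u_k)$, then gets the first item of Lemma \ref{lem.F-large} by contraposition. You instead get conclusion (1) by contraposition from forward growth of the transverse $cu$-component. That could work, but there is a genuine gap where you treat the difference between $t^+$ and $t_x$ as ``a bounded correction''.

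Nothing in \eqref{eq.delta-close-condition} bounds $\theta(t_x)-t_x$. In linear coordinates, let $y$ differ from $x$ only by replacing $x_u$ with $\kappa x_u$, $0<\kappa<1$. Then $y$ stays near $\sigma$ for an extra time $\approx\frac{1}{\lambda^u}\log(1/\kappa)$. Yet its exit point differs from $x^+$ only by an $E^c_\sigma$-displacement of size at most $|x_c|e^{\lambda^c t_x}$, which is below $\delta$ once $x$ is close enough to $W^s_{loc}(\sigma)$. A suitable $\theta$ keeps the two segments $\delta$-close throughout. So in your Step 3, where you assume $d^F_x(y)>d^E_x(y)$, the lag can be arbitrarily large. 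Your estimate (transverse $cu$-component at exit $\gtrsim e^{(\lambda^u+\lambda^c)t_x}$ times its entry value) then compares two orbits that have not spent the same time near $\sigma$, so it is not justified as stated. Note also that at $x^+$ the flow is nearly parallel to $E^u_\sigma$, so $\cP_{x^+}$ discards part of the $E^u$-displacement, not the $E^c$ one. The $E^c_\sigma$-displacement at exit lies in $N^{cu}(x^+)$, and that is exactly where the lag shows up.

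Under \eqref{eq.E-large-assumption} the lag is in fact bounded, but you must prove this; it is the heart of the Lorenz-like mechanism in this lemma. A lag of at least one unit produces at $x^+$ an $E^c_\sigma$-displacement inside $N^{cu}(x^+)$, comparable to the $c$-coordinate of $x^+$. By the domination $E^{ss}_\sigma\prec E^c_\sigma$ this dominates the $E^{ss}_\sigma$-displacement, contradicting large $E$-length at $x^+$. The paper encodes this with the flow-saturated $cu$-graph $\hat\Sigma$ and the curve $\ell^c_T\subset\cF^{cs}_{\sigma}(y^+)\cap\hat\Sigma$ joining $D^u_T$ to $D_{[t^+]}$. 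When $m=[t^+]-T\ge 1$, the curve $\ell^c_0$ crosses from $D^u_0$ to $D^u_1$, so $\mathrm{length}(\ell^c_0)\ge\vep_0|X(x)|$. Then \eqref{eq.back-expansion-s-large} and \eqref{eq.further-back-expansion-s} force the $E$-length at $x$ to dominate by a factor of order $\tilde\lambda_\sigma^{T+m}$. You need this device, or an explicit case split (lag $\ge 1$ versus lag $<1$), in both of your steps.

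Two smaller points. First, the singular-chart $E^{ss}_\sigma$-component is comparable to $d^E_x(y)$ only when the $E$-component dominates; this is what \cite[Lemma 6.4]{PYY25} provides. So your Step 2 needs conclusion (1) established first. Second, after time reversal the reversed Lorenz-like case of this lemma is Lemma \ref{lem.F-large} at a Lorenz-like singularity of $-X$. That case is driven by area expansion of the center-unstable bundle of $-X$, not by contraction of a single stable bundle. Your Step 3 mechanism would therefore have to be made quantitative, with a rate and with the lag handled, as a separate lemma.
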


In the second case, we show forward expansion of the $F$-length and that the $F$-length must be larger than or equal to $E$-length when the orbit of $y$ leaves $W_r(\sigma)$. This is given by Lemma \ref{lem.F-large}.

\begin{lemma}\label{lem.F-large}
For all $r_0>0$ sufficiently small, there exist constants $\bar{r}\in(0,r_0)$, such that for $0<r<\bar{r}$, there exists	$\delta_0>0$ such that for all $0<\delta<\delta_0$, the following property holds for every $\sigma\in\Sing(X)\cap\Lambda$:

Let $x\in(\partial W^-_r(\sigma)\setminus W^s_{loc}(\sigma))\cap\Lambda$ and $x^+=f_{t_x}(x)$. Suppose $y\in\cN_{\rho_0|X(x)|}(x)$ such that \eqref{eq.delta-close-condition} holds for some continuous, strictly increasing function $\theta:\RR\to\RR$ (not necessarily surjective) with $\theta(0) = 0$.  Suppose $y^+=\cP_{x^+}(f_{\theta(t_x)}(y))$ and $y^+=f_{t^+}(y)$ with $0<t_x\le t^+$, and moreover, 
\begin{equation}\label{eq.F-large-assumption}
	d^F_x(y)\ge d^E_x(y),\quad (\text{$y$ has large $F$-length at $x$})
\end{equation}
then we have the following conclusions:
\begin{enumerate}
	\item ($y^+$ has large $F$-length at $x^+$) $d^F_{x^+}(y^+)\ge d^E_{x^+}(y^+)$;
	\item (forward expansion on $F$-length) there exists $\lambda_{\sigma}>1$ such that 
	\[d^F_{x^+}(y^+)\ge \lambda_{\sigma}^{t_x}d^F_x(y).\]
\end{enumerate}
\end{lemma}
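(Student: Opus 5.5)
The plan is to treat Lemma \ref{lem.F-large} as the time-reversed mirror of Lemma \ref{lem.E-large}, and to argue directly in the fake-foliation coordinates near $\sigma$ given by Proposition \ref{prop.ff-singularity}. I first handle the case where $\sigma$ is Lorenz-like; the reversed Lorenz-like case then follows by applying the same argument to $-X$, which exchanges the roles of $cs$ and $cu$ and of $x$ and $x^+$. For a Lorenz-like $\sigma$ we have $\dim E^{ss}_\sigma=\dim N^{cs}$, so near $\sigma$ the normal direction $N^{cu}$ along the regular orbit of $x$ must be close to $E^c\oplus E^u$ projected orthogonally to $X$. At the entrance point $x$ (close to $W^s_{loc}(\sigma)$) the flow direction is nearly $E^c$, and $N^{cu}(x)$ is nearly $E^u$. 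At the exit point $x^+$ (close to $W^u_{loc}(\sigma)$) the flow direction is nearly $E^u$, and $N^{cu}(x^+)$ is nearly $E^c\oplus(E^u\ominus X)$. This alignment is the standard fact that the dominated splitting of the linear Poincar\'e flow extends to the singularity in the Lorenz-like way. Using it, I choose $r_0$ small and $\beta$ small compared with $\alpha$, so that the cones $C^{cu}_\beta$ at $x$ and $x^+$ (pushed by $\exp$ and scaled by $|X|$) sit inside the cones $C^{cu}_{\sigma,\alpha}$ of the singular foliations.

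Next I transfer the displacement of $y$ from the normal section at $x$ into the singular coordinates. Write $y$ in the $\cF^{cs}_\sigma,\cF^{u}_\sigma$ (or $\cF^{ss}_\sigma,\cF^{cu}_\sigma$) product structure relative to the orbit of $x$. The hypothesis $d^F_x(y)\ge d^E_x(y)$ says that the displacement lies essentially in a $cu$-cone. By local invariance of $\cF^{cu}_\sigma$ under $f_1$ and domination, this cone is preserved along the passage, so the displacement remains in the $cu$-cone all the way to the exit. At the exit I flow-project to $\cN(x^+)$ via $\cP_{x^+}$; since $\theta$ only reparametrizes time, it is harmless here. This projection changes lengths by a bounded factor, with derivatives between $1/3$ and $3$ as for the flow-box map $F_x$, and it preserves the cone because the projection kills only the flow direction $E^u\cap\langle X\rangle$. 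Combined with Corollary \ref{cor.switch-base}, this gives conclusion (1): $d^F_{x^+}(y^+)\ge d^E_{x^+}(y^+)$.

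For the expansion estimate in (2), I compare growth along $\cF^{cu}_\sigma$ with the growth of the flow speed, since the normal lengths are taken in $\cN_{\rho_0|X|}$ and effectively measured relative to the scaled Poincar\'e flow. Inside $B_{r_0}(\sigma)$, vectors in the $cu$-cone transverse to the flow grow at rate at least roughly $e^{\lambda^u t}$ for the $E^u$ part. The $E^c$ part contracts at rate $e^{\lambda^c t}$, but in the normal direction at exit it becomes the flow-transverse $E^c$ component, and the relevant rate is $\lambda^c+\lambda^u=\sv(\sigma)>0$ up to errors $O(\alpha)$. This is the key place where the Lorenz-like inequality $-\lambda^c<\lambda^u$ enters. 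The unnormalized displacement therefore grows at rate at least $e^{(\sv(\sigma)-O(\alpha))t_x}$ overall. To get the estimate uniform in $t_x$ (which is large when $r$ is small), I split the passage into the bounded-time transitions near $\partial B_{r_0}$ and the long time near $\sigma$. The transitions cost only a bounded factor, and that factor is absorbed by choosing $\bar r$ small, so that $t_x$ is large and the net rate $\lambda_\sigma=e^{\sv(\sigma)/2}>1$ works. I take $\delta_0$ small so that $y$ remains within the domain $B_{r^*}$ of the foliations throughout, as guaranteed by \eqref{eq.delta-close-condition}.

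The main obstacle is the rate bookkeeping in (2). Normal lengths at $x$ and at $x^+$ are measured in different normal sections, and the length of the $E^c$-component of the displacement must be converted across the corner, where the flow direction rotates from $E^c$ to $E^u$. To handle this, I would first prove a linear version for $Df_t$ restricted to the $cu$-cone, using $\sv(\sigma)>0$, and then pass to the nonlinear foliation $\cF^{cu}_\sigma$ via $C^1$-closeness with error $\iota$.
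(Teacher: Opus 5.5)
Your heuristic rate $\sv(\sigma)=\lambda^c+\lambda^u$ is the right one, but there is a genuine gap exactly where you call the time change ``harmless''.

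First, the $f_1$-invariance of $\cF^{ss}_\sigma$ and $\cF^{cu}_\sigma$ only lets you compare the synchronous points $f_k(y)$ and $f_k(x)$. However, $y^+=f_{t^+}(y)$ with $t^+-t_x$ unbounded. So $f_{t_x}(y)$ may still be deep inside $B_{r_0}(\sigma)$, far from $x^+$ and outside the domain of $\cP_{x^+}$. Since the leaves $\cF^{cu}_\sigma(f_k(x))$ are not flow-saturated, sliding along the orbit of $y$ destroys your $ss$/$cu$ bookkeeping.

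Second, your claim that projection to $\cN(x^+)$ preserves the cone is false. At $x^+$ the flow direction is close to $E^u$, which lies \emph{inside} the $(E^c\oplus E^u)$-cone. A displacement in that cone that is mostly a time shift therefore projects to a vector dominated by its $ss$-part. Conclusion (1) needs a quantitative lower bound on the flow-transverse $cu$-part at the exit.

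Third, ``linearize along $x$ and pass to the nonlinear foliation by $C^1$-closeness'' cannot work. The passage time is unbounded, and $d(x,y)$ need not be small compared with $d(x,W^s_{loc}(\sigma))$. A telltale sign is that you never use the hypothesis $t_x\le t^+$, which is essential. Take the linear model $\dot z=\mathrm{diag}(\lambda^{ss},\lambda^c,\lambda^u)z$ with exit section $\{z_u=r_0\}$. Let $x$ have $E^u$-coordinate $x_u$ and $y$ have $E^u$-coordinate $x_u+\epsilon$ with $\epsilon\gg x_u$. Then $y$ exits first, and the exit displacement is $r_0^{1-a}\bigl((x_u+\epsilon)^a-x_u^a\bigr)\le r_0^{1-a}\epsilon^a$, where $a=-\lambda^c/\lambda^u\in(0,1)$. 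This does not grow with $t_x$, so (2) fails.

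The missing idea is the paper's flow-saturated $cu$-graph.
\begin{itemize}
\item One takes a $u$-disk $D^u_0\subset\cN(x)$ through $x$, centred at a point of $W^s_{loc}(\sigma)$, and flows it to obtain $\Sigma$. The $\lambda$-lemma shows that its part $\hat\Sigma$ near $W^u_{loc}(\sigma)$ meets each $\cF^{cs}_\sigma$-leaf in a curve tangent to the $(\alpha,\RR^c)$-cone.
\item Then $y^+$ is joined to $x^+$ by a triple $(D^s_T,\ell^c_T,D^u_T)$. Here $\ell^c_T\subset\cF^{cs}_\sigma(y^+)\cap\hat\Sigma$ runs from $D^u_T$ to $D_{[t^+]}$. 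This is where $t^+\ge t_x$ enters, and $\ell^c$ records precisely the phase lag you tried to dismiss.
\item The triple is pulled back by $f_{-T}$ and projected to $\cN(x)$. The $D^u$-part grows by $u$-cone expansion. The length of $\ell^c$ grows by \emph{area} expansion of $(\RR^c\oplus\RR^u)$-parallelograms spanned by $X$ and the tangent of $\ell^c$, combined with the lower bound on $|X(x)|/|X(x^+)|$. This is the rigorous form of your $\lambda^c+\lambda^u$ heuristic.
\item Conclusion (1) is obtained not by cone transport but by contradiction with Lemma \ref{lem.E-large}.
\end{itemize}

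Your reduction for reversed Lorenz-like $\sigma$ is also off. There, Lemma \ref{lem.F-large} for $X$ is Lemma \ref{lem.E-large} for $-X$ at a Lorenz-like singularity, which uses backward $ss$-expansion. Applying your $F$-argument to $-X$ would instead yield Lemma \ref{lem.E-large} for $X$.
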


\begin{remark}\label{r.EF}
	The assumption \eqref{eq.E-large-assumption} can be replaced by $d^E_{x^+}(y^+)\ge 0.9d^F_{x^+}(y^+)$ (or any other fixed constant) without affecting the proof. Same for \eqref{eq.F-large-assumption}. The assumption that $x\in\Lambda$ is not essential. It is only used to guarantee that the $x$ starts near $E^c(\sigma)$ in the sense of \cite[Lemma 2.15, 2.16]{PYY25}. 
\end{remark}	
\begin{remark}\label{r.switch}
	By continuity and shrinking $\delta_0$ if necessary, one can {\em a priori} require that $\theta(t_x)\ge \frac12 \inf_{x\in \partial W_r^-(\sigma)}t_x$ which goes to infinity as $r\to 0$. 
	Furthermore, the assumption that $t^+>t_x$ implies $\theta(t_x) > t_x - 1$. In other words, we assume that the shadowing orbit $y$ spends at least as much time as $x$ when passing through $B_{r_0}(x)$. This assumption is not essential since once can switch $x$ and $y$ and consider the time reparametrization $\theta^{-1}$ for the orbit of $x$. Doing so does not (essentially) affect the assumption on the $E$- and $F$-length; see Corollary \ref{cor.switch-base-pf-expansivity} below. 
\end{remark}

In the following, we will prove these two lemmas under the assumption that $\sigma$ is Lorenz-like. Then the general case follows by considering $-X$, similar to \cite{PYY25a}. More details will be provided in Section \ref{ss.4.3}.

 %Therefore, we need only prove these two lemmas for the case when $\sigma$ is Lorenz-like. 
%Let us consider firstly the case when $\sigma$ is a Lorenz-like singularity. 
%Let us remark that Lemma \ref{lem.F-large} cannot be proved simply by applying Lemma \ref{lem.E-large} with the case ofto $-X$. This is because $\sigma$, as a singularity of $-X$, is not Lorenz-like. 

For $r^*>0$ small enough, we identify $B_{r^*}(\sigma)$ as the $r^*$-ball in $\mathbb{R}^{\dim M}$ centered at the origin $\sigma$. 
When $\sigma$ is Lorenz-like, we will assume for simplicity that the subspaces $E^{ss}_{\sigma}$, $E^c_{\sigma}$ and $E^u_{\sigma}$ are mutually orthogonal. Moreover, we identify these subspaces with $\mathbb{R}^{s}$, $\mathbb{R}^c$, and $\mathbb{R}^u$, respectively. Here, we write $s,c$ and $u$ as the dimensions of $E^{ss}_{\sigma}$, $E^c_{\sigma}$, $E^u_{\sigma}$, respectively. Note that $c=\dim E^c_{\sigma}=1$. 

For each $\alpha>0$, one defines on $B_{r^*}(\sigma)$ the $(\alpha,\mathbb{R}^s)$-cone, $(\alpha,\mathbb{R}^c)$-cone, and $(\alpha,\mathbb{R}^{u})$-cone, etc., in the usual way.

\begin{lemma}\label{lem.flow-direction}
	Assume that $\sigma$ is Lorenz-like. 
	For any $\alpha>0$, there exist $r_0\in (0,r^*)$ and $\bar{r}\in (0,r_0)$ such that for every $r\in (0,\bar{r}]$, the following results hold:
	\begin{itemize}
		\item for every $x\in \partial W^-_r(\sigma)\cap\Lambda$,  $X(x)$ is contained in $(\alpha/2,\mathbb{R}^c)$-cone. 
		\item for every $x\in\partial W^+_r(\sigma)\cap\Lambda$,  $X(x)$ is contained in $(\alpha/2,\mathbb{R}^u)$-cone.
	\end{itemize} 
\end{lemma}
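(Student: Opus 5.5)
We argue by contradiction combined with the structure of $\Lambda$ near a Lorenz-like singularity, as in \cite[Lemmas 2.15, 2.16]{PYY25}. Fix $\alpha>0$. Since $W^{ss}(\sigma)\cap\Lambda=\{\sigma\}$, the compact set $\Lambda\cap\partial B_{r_0}(\sigma)$ meets $W^s_{loc}(\sigma)$ only in points of $W^s_{loc}(\sigma)\setminus W^{ss}_{loc}(\sigma)$. In local linearized coordinates, every such point is tangent to the $\mathbb{R}^c$ direction in the limit. The plan has three steps, carried out in order.

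\textbf{Step 1.} Choose $r_0$. Taking $r_0\in(0,r^*)$ small, the flow on $B_{r_0}(\sigma)$ is $C^1$-close to the linear flow $e^{tA}$ with $A=Df_\sigma$ preserving $\mathbb{R}^s\oplus\mathbb{R}^c\oplus\mathbb{R}^u$. Hence $X(z)=Az+o(|z|)$. If $z=(z^s,z^c,z^u)$ lies in a cone of angle $\alpha'\ll\alpha$ around $\mathbb{R}^c$ (respectively $\mathbb{R}^u$), then, because $A$ preserves the splitting and is invertible, $X(z)$ lies in the $(\alpha/2,\mathbb{R}^c)$-cone (respectively the $(\alpha/2,\mathbb{R}^u)$-cone). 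So it suffices to show that the entry points $\partial W^-_r(\sigma)\cap\Lambda$ lie near the $\mathbb{R}^c$ axis, and the exit points $\partial W^+_r(\sigma)\cap\Lambda$ lie near the $\mathbb{R}^u$ axis, for small $r$.

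\textbf{Step 2.} Exit points. Suppose there are $r_n\to0$ and $x_n\in\partial W^+_{r_n}(\sigma)\cap\Lambda$ whose positions stay outside the $\alpha'$-cone around $\mathbb{R}^u$. Each $x_n$ lies on $\partial B_{r_0}$ and its orbit segment inside $B_{r_0}$ reaches $B_{r_n}(\sigma)$. Pass to a limit $x_n\to x_\infty\in\Lambda\cap\partial B_{r_0}$. Since the backward orbits stay in $B_{r_0}$ for time tending to infinity, $x_\infty\in W^u_{loc}(\sigma)$. In dimension $u$, $W^u_{loc}(\sigma)$ is tangent to $\mathbb{R}^u$, so for small $r_0$ it lies in the $\alpha'/2$-cone around $\mathbb{R}^u$, which is a contradiction.

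\textbf{Step 3.} Entry points, which I expect to be the main difficulty. Similarly, limits of entry points $x_n\in\partial W^-_{r_n}(\sigma)\cap\Lambda$ lie in $W^s_{loc}(\sigma)\cap\Lambda\cap\partial B_{r_0}$. The issue is that $W^s_{loc}$ has dimension $s+1$, so it is not automatically near $\mathbb{R}^c$. Here I use $W^{ss}(\sigma)\cap\Lambda=\{\sigma\}$: for a point $p\in W^s_{loc}\setminus W^{ss}$, forward iterates converge to $\sigma$ tangent to $E^c$, by domination $E^{ss}\prec E^c$. To keep the cone uniform, argue as follows. The set $K=W^s_{loc}(\sigma)\cap\Lambda\cap \overline{B_{r_0}}\setminus B_{r_0/2}$ is compact and disjoint from $W^{ss}$. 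So there is a cone of some angle $a$ around $\mathbb{R}^c$ containing $K$, measured with the $\cF^{ss}_\sigma$/$\cF^c$ coordinates from Proposition \ref{prop.ff-singularity}. Under forward iteration, the ratio $|z^s|/|z^c|$ contracts by the domination rate. Therefore a point of $\Lambda\cap W^s_{loc}$ that stays long near $\sigma$ has ratio at most $\alpha'/2$ once it is close to $\sigma$.

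To get the cone at the boundary $\partial B_{r_0}$ itself, I rechoose the radii. First pick $r_1$ from the compactness argument. Then take $r_0=r_1e^{-\lambda T}$ (a shrunk ball), where $T$ is large enough that domination improves the angle from $a$ to $\alpha'/2$. Every point of $\Lambda\cap W^s_{loc}$ on $\partial B_{r_0}$ is then a forward iterate, by time at least $T$, of a point of $K$, and so lies in the $\alpha'/2$-cone. Finally, applying the limiting argument of Step 2 to entry points with $r_n\to0$, together with Step 1, gives the first item of the lemma.
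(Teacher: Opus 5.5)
Your argument is correct. For the exit points it is what the paper does: the paper calls that item a direct consequence of the hyperbolicity of $\sigma$. For the entry points the paper gives no proof of its own. It cites \cite[Lemma 2.16]{PYY25}, modelled on \cite[Lemma 4.4]{LGW}. As I understand that line of argument, it is projective in outline:
\begin{itemize}
\item it considers the compact set of limit directions of $X(x)/|X(x)|$ as $x\to\sigma$ in $\Lambda$, which is invariant under the projectivized linear flow at $\sigma$;
\item it uses $W^{ss}(\sigma)\cap\Lambda=\{\sigma\}$ to exclude $E^{ss}$-directions from this set;
\item since backward iteration pushes every direction with nonzero $E^{ss}$-component toward $E^{ss}$, the set must lie in $E^c\oplus E^u$;
\item it then intersects the resulting thin $E^{cu}$-cone with the fact that entry points lie near $W^s_{loc}(\sigma)$.
\end{itemize}
You instead never leave $W^s_{loc}(\sigma)$. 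Compactness at a fixed scale $r_1$, together with forward contraction of $|z^{ss}|/|z^c|$, places $\Lambda\cap W^s_{loc}(\sigma)\cap\partial B_{r_0}(\sigma)$ near the $\mathbb{R}^c$-axis once $r_0\ll r_1$. A limit argument then passes this to the entry points.

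Your route is more elementary and reuses Proposition~\ref{prop.ff-singularity}, but it yields only the entry-point statement. The projective route gives the stronger fact that flow directions at all points of $\Lambda$ near $\sigma$ lie in a thin $E^{cu}$-cone.

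When you write yours up, two points need to be explicit:
\begin{itemize}
\item \emph{Why the ratio contracts for an arbitrarily large initial bound $a$.} Because $W^s_{loc}(\sigma)=\cF^{cs}_\sigma(\sigma)$ and $W^{ss}_{loc}(\sigma)=\cF^{ss}_\sigma(\sigma)$, the coordinates $z\mapsto(z^{ss},z^c)$ from the $\cF^{ss}_\sigma/\cF^{c}_\sigma$ product structure turn $f_1$ into a product map. Hence the ratio shrinks by a fixed factor $<1$ at every step, whatever its size. In plain linear coordinates the nonlinear error at scale $r_1$ is of order $\epsilon(r_1)(1+a)$, and the estimate could fail.
\item \emph{The passage time.} Take $K$ to be a genuine fundamental domain for the backward flow: use continuous time, or an annulus thick enough that backward $f_1$-orbits cannot skip it. Take $\lambda$ at least the strongest contraction rate on $E^s$. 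Then the passage from $\partial B_{r_1}$ to $\partial B_{r_0}$ really does take time at least $T$.
\end{itemize}
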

\begin{proof}
	The first result is proved in \cite[Lemma 2.16]{PYY25} which uses a proof similar to that in \cite[Lemma 4.4]{LGW}. The second result is a direct consequence of the hyperbolicity of $\sigma$.
\end{proof}

We now assume that $0<r_0<r^*$ satisfy both Proposition \ref{prop.ff-singularity} and Lemma \ref{lem.flow-direction}, for the given $\alpha>0$. In the discussions below, we will reduce the constants $\alpha$, $r_0$ and $r^*$, when it is necessary.

\subsection{Proof of Lemma \ref{lem.E-large} when $\sigma$ is Lorenz-like}

%Let $r_0>0$, $\bar{r}\in(0,r_0)$ be given by Lemma \ref{lem.flow-direction}. 
We let \[W_r:=\bigcup_{\sigma\in\Lambda\cap\Sing(X)}W_r(\sigma)\]
which is an open neighborhood of $\Lambda\cap\Sing(X)$. Note that for a small neighborhood $U_\Lambda$ of $\Lambda$, $\overline U\setminus W_r$ is a compact set that has no singularity (we may further shrink $r_0$ and $\overline r$ such that $W_r\subset U_\Lambda$). Then, we let $\delta_0>0$ be small such that 
\[0<\delta_0<\inf_{y\in \overline U\setminus W_r}\rho_0|X(y)|.\]
This implies, in particular, that the local Poincar\'e map $\cP_{x'}$ is well-defined in $B_{\delta_0}(x')$ for every $x'\in \partial W_r(\sigma)$. Now fix $x\in(\partial W^-_r(\sigma)\setminus W^s_{loc}(\sigma))\cap\Lambda$ as in Lemma \ref{lem.E-large}.
We will write for simplicity $\cN(x)=\cN_{\delta_0}(x)\subset\cN_{\rho_0|X(x)|}(x)$ and $\cN(x^+)=\cN_{\delta_0}(x^+)\subset\cN_{\rho_0|X(x^+)|}(x^+)$. 

\subsubsection{Construction of a $cu$-graph}\label{sss.4.1.1}

Reducing $\bar{r}$, we may assume that for each $r\in (0,\bar{r}]$, the point $x\in \partial W^-_r(\sigma)$ is sufficiently close to the local stable manifold $W^s_{loc}(\sigma)$ in $B_{r^*}(\sigma)$. Consequently, 
we can take a smoothly embedded disk $D_0^u\subset \cN(x)$ of dimension $\dim D_0^u=u$ that is tangent to the $(\alpha,\mathbb{R}^u)$-cone, contains $x$ in its interior and is centered at a point $z_0\in W^s_{loc}(\sigma)$.

For each $t\ge -1$, let $D^u_t$ be the connected component of $f_t(D^u_0)\cap B_{r^*}(\sigma)$ containing $f_t(z_0)$.  
By the $\lambda$-lemma (see e.g. \cite{PdM}), there is a constant $T_0>0$ such that for any $t\ge T_0$, $D^u_t$ is $C^1$ close to the local unstable manifold of $\sigma$ in $B_{r^*}(\sigma)$, denoted as $W_{loc}^u(\sigma)$,  and it intersects transversely with $\cF^{cs}_{\sigma}(y')$ at a unique point, for every $y'\in B_{r_0}(\sigma)$. 
By further reducing $\bar{r}$ if necessary, we may assume that 
\[T:=t_x\ge T_0+1. \]

Write $D_0=f_{[-1,1]}(D^u_0)$. 
By Lemma \ref{lem.flow-direction}, we may assume that $D_0$ is tangent to the $(\alpha,\mathbb{R}^c\oplus\mathbb{R}^u)$-cone. 
For each integer $i\ge 1$, let $D_i$ be the connected component of $f(D_{i-1})\cap B_{r^*}(\sigma)$ that contains $f_i(z_0)$. By forward invariance of the $(\alpha,\mathbb{R}^c\oplus\mathbb{R}^u)$-cone field, each $D_i$ is a $(u+1)$-dimensional disk tangent to $(\alpha,\mathbb{R}^c\oplus\mathbb{R}^u)$-cone. 
Then the union 
\[\Sigma:=\bigcup_{i=0}^{\infty}D_i\]
is a $C^1$ submanifold in $B_{r^*}(\sigma)$ and is tangent to the $(\alpha,\mathbb{R}^c\oplus\mathbb{R}^u)$-cone. Moreover, the $\lambda$-lemma implies that the local unstable manifold $W^u_{loc}(\sigma)$  is contained in the boundary of $\Sigma$. 
Let \[\hat{\Sigma}:=\bigcup_{t\ge T_0}D^u_t,\] which is the part of $\Sigma$ that is close to the local unstable manifold $W^u_{loc}(\sigma)$. 

\begin{lemma}\label{lem.cu-graph}
	For each $y'\in B_{r_0}(\sigma)$, $\cF^{cs}_{\sigma}(y')$ intersects $\hat{\Sigma}$ transversely along a $C^1$ curve, which is tangent to the $(\alpha, \mathbb{R}^c)$-cone. 
\end{lemma}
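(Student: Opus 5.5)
Both objects live in $B_{r^*}(\sigma)$ and meet in complementary dimensions. The plan is to get the intersection from transversality plus a dimension count, and then to read off the direction of the intersection curve from the cones.

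\emph{Dimensions and transversality.} The leaf $\cF^{cs}_{\sigma}(y')$ has dimension $s+1$ and is tangent to the $(\alpha,\mathbb{R}^s\oplus\mathbb{R}^c)$-cone. The manifold $\hat\Sigma$ has dimension $u+1$ and is tangent to the $(\alpha,\mathbb{R}^c\oplus\mathbb{R}^u)$-cone, since it is a subset of $\Sigma$. For $\alpha$ small, any subspace in the first cone and any subspace in the second cone together span $T_pM$. Their intersection therefore has dimension $(s+1)+(u+1)-(s+1+u)=1$. Hence at every point of $\hat\Sigma\cap\cF^{cs}_\sigma(y')$ the intersection is transverse, and by the implicit function theorem it is locally a $C^1$ one-dimensional submanifold.

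\emph{Non-emptiness and connectedness.} By construction, for each $t\ge T_0$ the disk $D^u_t$ meets $\cF^{cs}_\sigma(y')$ transversely in exactly one point $p_t(y')$; this is the $\lambda$-lemma property recorded above. By transversality and the implicit function theorem, $t\mapsto p_t(y')$ is $C^1$. The curve is then $\gamma=\{p_t(y'):t\ge T_0\}=\hat\Sigma\cap\cF^{cs}_\sigma(y')$. Equality holds because every point of $\hat\Sigma$ lies on some $D^u_t$, and that disk meets the leaf only once. The curve $\gamma$ is regular: $\frac{d}{dt}p_t$ cannot vanish, since it has a nonzero component transverse to $D^u_t$. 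Indeed the family $D^u_t$ sweeps $\hat\Sigma$ in the flow direction, so $\partial_t$ of the parametrization is $X$ modulo $TD^u_t$. Near $\sigma$, and for points of $\hat\Sigma$ close to $W^u_{loc}$, the vector $X$ is not in the $(\alpha,\mathbb{R}^u)$-cone unless it is absorbed by the $D^u_t$ direction. This step needs care; a cleaner route is to avoid it and use the submanifold structure directly, as in the previous paragraph.

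\emph{Direction of the curve.} Let $v$ be tangent to $\gamma$ at $p$. Then $v$ lies in both cones. Write $v=v^s+v^c+v^u$ in the orthogonal splitting. The first cone gives $|v^u|\le\alpha|v^s+v^c|$, and the second gives $|v^s|\le\alpha|v^c+v^u|$. Combining these, $|v^s|+|v^u|\le 2\alpha(|v^c|+|v^s|+|v^u|)$, hence $|v^s+v^u|\le\frac{2\alpha}{1-2\alpha}|v^c|$. So $v$ lies in the $(C\alpha,\mathbb{R}^c)$-cone. After renaming $\alpha$ (starting from $\alpha/C$), this gives the claimed tangency to the $(\alpha,\mathbb{R}^c)$-cone.

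\emph{Main obstacle.} The step I expect to be delicate is making sure $\hat\Sigma$ is a genuine embedded $C^1$ manifold tangent to the $(\alpha,\mathbb{R}^c\oplus\mathbb{R}^u)$-cone up to its part near $W^u_{loc}(\sigma)$. This relies on forward invariance of that cone under $f_1$ in $B_{r^*}(\sigma)$, together with Lemma \ref{lem.flow-direction} for the initial disk $D_0$. The point is that the flow direction along $\hat\Sigma$, which becomes nearly $\mathbb{R}^u$ near $W^u_{loc}$, stays inside the $cu$-cone, so the cone estimate remains valid throughout.
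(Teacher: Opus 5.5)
Your proposal is correct and follows essentially the same route as the paper. In both, the curve is the set of unique intersection points of $D^u_t$ ($t\ge T_0$) with the leaf, and its direction is read off from the two cones. Your transversality and dimension count, your observation that these points exhaust $\hat\Sigma\cap\cF^{cs}_\sigma(y')$, and your explicit $\frac{2\alpha}{1-2\alpha}$ cone loss (absorbed by shrinking $\alpha$) are more careful versions of steps the paper states tersely.

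The regularity worry you flag is harmless and needs no cone estimate. Since $Df_t$ maps the direct sum $T_wD^u_0\oplus\langle X(w)\rangle$ isomorphically onto $T_{f_tw}D^u_t\oplus\langle X(f_tw)\rangle$, the vector $X$ is never tangent to $D^u_t$.
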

\begin{proof}
	By construction, for each $y'\in B_{r_0}(\sigma)$, the leaf $\cF^{cs}_{\sigma}(y')$ intersects $D^u_t$ $(t\ge T_0)$ transversely at a unique point, which will be denoted as $y^u_t$. Then the set $\ell:=\{y^u_t\mid t\ge T_0\}$ forms a $C^1$ curve, contained in $\cF^{cs}_{\sigma}(y')\pitchfork \hat{\Sigma}$. Since $\cF^{cs}_{\sigma}(y')$ is tangent to $(\alpha, \mathbb{R}^s\oplus\mathbb{R}^c)$-cone and $\hat{\Sigma}$ is tangent to $(\alpha, \mathbb{R}^c\oplus\mathbb{R}^u)$-cone, we have that $\ell$ is tangent to the $(\alpha, \mathbb{R}^c)$-cone.
\end{proof}

\begin{figure}[htpb]
	\centering
	\includegraphics[width=0.6\textwidth]{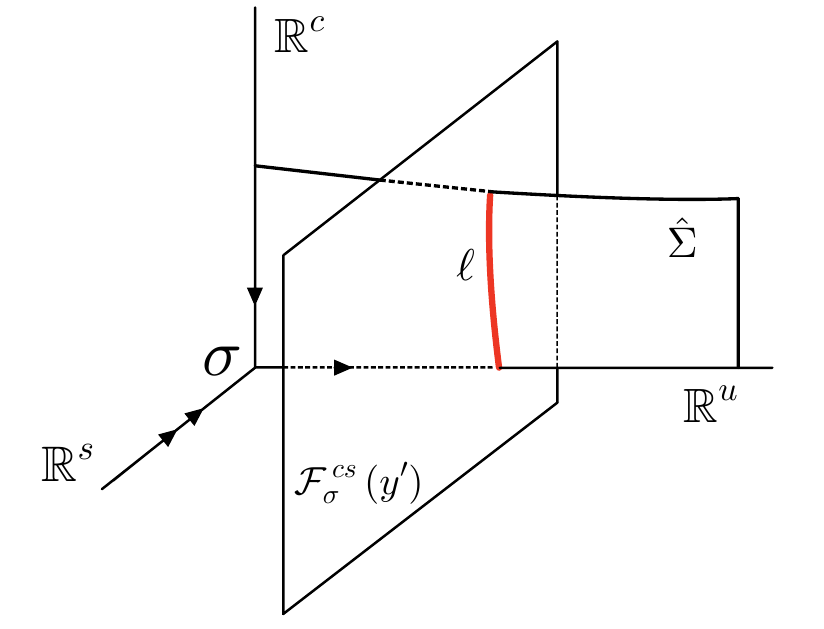}
	\caption{The intersection of $\cF^{cs}_{\sigma}(y')$ with $\hat{\Sigma}$}
	\label{fig.sigma-hat}
\end{figure}

Reducing $\alpha$ if necessary, we see that $\hat{\Sigma}$ is transverse to $\cF^{ss}_{\sigma}$ and is a $cu$-graph in the following sense: for each $y'\in B_{r_0}(\sigma)$, $\cF^{ss}_{\sigma}(y')$ intersects $\hat{\Sigma}$ transversely at most one point. 
In fact, as $\cF^{cs}_{\sigma}(y')$ is subfoliated by $\cF^{ss}_{\sigma}$ whose leaves are tangent to $(\alpha,\mathbb{R}^s)$-cone, the intersection between each leaf of $\cF^{ss}_{\sigma}$ and $\ell:=\cF^{cs}_{\sigma}(y')\pitchfork \hat{\Sigma}$ consists of at most one point. In particular, $\cF^{ss}_{\sigma}(y')$ intersects $\hat{\Sigma}$ transversely at most one point.

\subsubsection{A coordinate system near $\sigma$} \label{sss.4.1.2}

Recall that Lemma \ref{lem.E-large} assumes that $t^+\ge T=t_x$, where $t^+$ 
is taken such that 
$$
y^+ = f_{t^+}(y)=\cP_{x^+}(f_{\theta(t_x)}(y))
$$
and 
$$
|t^+ - \theta(t_x)|\le \frac{\delta_0}{\inf_{z\in U_\Lambda\setminus W_r}|X(z)|}.
$$
Since the denominator does not depend on $\delta_0$, we may shrink $\delta_0$ such that $|t^+ - \theta(t_x)|<1$ for all $x$ and $y$ satisfying the assumptions of Lemma \ref{lem.E-large}. %This further implies $\theta(t_x)> t_x -1.$

Let 
\[\hat{y}=f_{-[t^+]}(y^+),\] 
where $[t^+]$ denotes the integer closest to $t^+$ (take an arbitrary one if there are two choices). Without loss of generality, we assume also that $T$ is an integer. Then it holds $[t^+]\ge T$. 
Note that $\hat{y}=f_{\hat{t}}(y)$ for some $|\hat{t}|<1$ and $d(x,y)<\delta\in (0,\delta_0)$. By reducing  $\delta>0$ if necessary, one has $\cF^{ss}_{\sigma}(\hat{y})\pitchfork D_0\neq\emptyset$. By the condition \eqref{eq.delta-close-condition}, the orbit segment from $y$ to $y^+$ stays inside the ball $B_{r^*}(\sigma)$. 
Then it follows from local invariance of the $\cF^{ss}_{\sigma}$ foliation that 
\[\cF^{ss}_{\sigma}(\hat{y}_i)\pitchfork D_i\neq\emptyset,\quad \forall 0\le i\le [t^+],\] 
where $\hat{y}_i=f_i(\hat{y})$. In particular, $\cF^{ss}_{\sigma}(y^+)$ intersects $D_{[t^+]}$ at a unique point, which will be denoted as $y^s_T$. 
Recall that $T\ge T_0+1$. Hence, $D^u_T$ intersects $\cF^{cs}_{\sigma}(y^+)$ transversely at a point $y^u_{T}$. Denote 
\[\tilde{y}:=f_{-T}(y^+)\quad \text{and}\quad \tilde{y}_t=f_t(\tilde{y}), t = 1,2,\cdots, T-1.\]

Observe that $D^u_T$ and $D_{[t^+]}$ are both contained in $\hat{\Sigma}$. 
By Lemma \ref{lem.cu-graph}, $\cF^{cs}_{\sigma}(y^+)$ intersects $\hat{\Sigma}$ transversely at a $C^1$ curve tangent to the $(\alpha,\mathbb{R}^c)$-cone, and it contains a segment $\ell^c_T$ that connects $y^s_T$ to $y^u_T$. See Figure \ref{fig.near-x+}.
\begin{figure}[htpb]
	\centering
	\includegraphics[width=0.8\textwidth]{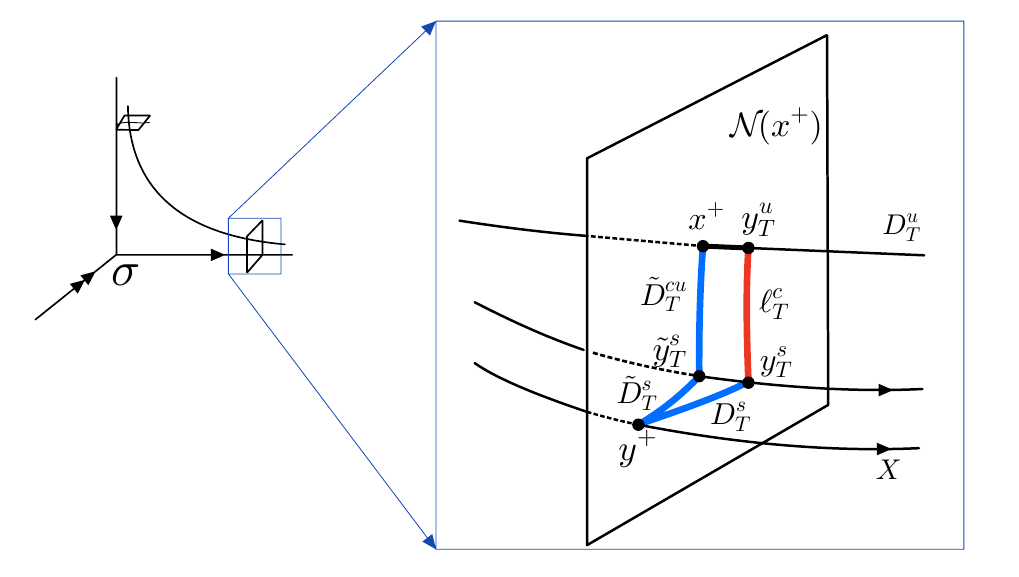}
	\caption{The triple $(D^s_T,\ell^c_T,D^u_T)$ and their projection to $\cN(x^+)$}
	\label{fig.near-x+}
\end{figure}

Define 
\[\ell^c_{T-k}=f_{-k}(\ell^c_T),%\quad D^s_{T-k}=f_{-k}(D^s_T),
\quad\forall k=1, 2, \cdots,T.\]
Then each $\ell^c_k$ $(k=0,1,\cdots, T)$ is contained in the intersection of $f_{-k}(\hat{\Sigma})$ with $\cF^{cs}_{\sigma}(\tilde{y}_k)$, where $\tilde{y}_k=f_k(\tilde{y})=f_{k-T}(y^+)$. Note that $f_{-k}(\hat{\Sigma})\subset \Sigma$, which is tangent to the $(\alpha, \mathbb{R}^c\oplus\mathbb{R}^u)$-cone, and $\cF^{cs}_{\sigma}(\tilde{y}_k)$ is tangent to the $(\alpha, \mathbb{R}^s\oplus\mathbb{R}^c)$-cone. Consequently, $\ell^c_k$ is tangent to the $(\alpha,\mathbb{R}^c)$-cone. Moreover, denote for each $k=0,1,\cdots, T$ the points
\[y^{\iota}_{k}:=f_{k-T}(y^{\iota}_{T}),\quad \iota=s,u,\]
which are the endpoints of $\ell^c_k$. 
Then $y^s_{k}$ is the unique intersection of $\cF^{ss}_{\sigma}(\tilde{y}_k)$ with $f_{-k}(\hat{\Sigma})$, and $y^u_{k}$ is the unique intersection of $D^u_k$ with $\cF^{cs}_{\sigma}(\tilde{y}_k)$. We can now take an $s$-dimensional embedded disk $D^s_0\subset\cF^{ss}_{\sigma}(\tilde{y})$ that is centered at $\tilde{y}$ and contains $y^s_0$. Let $D^s_k=f_k(D^s_0)$ for $k=1,2,\cdots, T$. Then for each $k=0,1,\cdots, T$,  $D^s_k\subset \cF^{ss}_{\sigma}(\tilde{y}_k)$ is an embedded disk containing $\tilde{y}_k$ and $y^s_k$. 

In this way, we have defined a triple $(D^s_k, \ell^c_k, D^u_k)$ for each $k=0,1,\cdots, T$, that connects $\tilde{y}_k$ to $x_k$, via the points $y^s_k$ and $y^u_k$. Below, we show how to use the triples to prove the assertions in Lemma \ref{lem.E-large}. 

\subsubsection{Transition from $(\cF^{cs}_{x^+,\cN},\cF^{cu}_{x^+,\cN})$ to $(D^s_T,\ell^c_T, D^u_T)$}\label{sss.4.1.3}

This step is exactly the same as in \cite[Proof of Lemma 6.7, Step 1]{PYY25}. We include the details here for completeness. 

Near the point $x^+$, the triple $(D^s_T,\ell^c_T,D^u_T)$ may not be contained in the normal manifold $\cN(x^+)$.  However, $D^s_T$ is almost parallel to $\cN(x^+)$, as it is tangent to the $(\alpha,\mathbb{R}^s)$-cone while the flow direction is almost parallel to $\RR^u$. Let 
\[\tilde{D}^s_T=\cP_{x^+}(D^s_T),\quad \tilde{y}^s_T=\cP_{x^+}(y^s_T).\]
We see that $\tilde{D}^s_T$ is tangent to the $(2\alpha,N^{cs}(x^+))$-cone, reducing $\delta\in (0,\delta_0)$ if necessary. 
Note that $\ell^c_T$ and $D^u_T$ are contained in $\hat{\Sigma}$, which is a $(u+1)$-dimensional disk tangent to the $(\alpha,\mathbb{R}^c\oplus\mathbb{R}^u)$-cone and consists of entire flow segments. Moreover, near $x^+$, the flow segments are almost orthogonal to $\cN(x^+)$. Therefore, the intersection of $\hat{\Sigma}$ with $\cN(x^+)$ contains a $u$-dimensional disk tangent to the $(2\alpha,N^{cu}(x^+))$-cone, which can be extended to a disk $\tilde{D}^{cu}_T$ tangent also to $(2\alpha,N^{cu}(x^+))$-cone and it contains the points $x^+$, $\tilde{y}^s_T$.  See Figure \ref{fig.near-x+}.

Now, on $\cN(x^+)$ we have the following objects:
\begin{itemize}
	\item $s$-dimensional disks $\tilde{D}^s_T$ and $\cF^{cs}_{x^+,\cN}(y^+)$, which are both tangent to the $(2\alpha, N^{cs}(x^+))$-cone and contain $y^+$;
	\item $u$-dimensional disks $\tilde{D}^{cu}_T$ and $\cF^{cu}_{x^+,\cN}(x^+)$, which are both tangent to the $(2\alpha,N^{cu}(x^+))$-cone and contain $x^+$;
	\item $\tilde{D}^s_T\pitchfork \tilde{D}^{cu}_T=\{\tilde{y}^s_T\}$, and $\cF^{cs}_{x^+,\cN}(y^+)\pitchfork \cF^{cu}_{x^+,\cN}(x^+)=\{y^{+,F}\}$. 
\end{itemize}

Since we have assumed large $E$-length at $x^+$, see \eqref{eq.E-large-assumption}, 
by reducing $\alpha>0$ if necessary, \cite[Lemma 6.4]{PYY25} shows that  
\begin{equation}\label{eq.E-length-control-T}
	d_{\tilde{D}^s_T}(\tilde{y}^s_T,y^+)\in (0.9,1.1)d_{\cF^{cs}_{x^+,\cN}}(y^{+,F},y^+)=(0.9,1.1)d_{x^+}^E(y^+).
\end{equation}
Since $\tilde{D}^s_T$ and $D^s_T$ are both close to $x^+$ and tangent to the $(3\alpha,\mathbb{R}^s)$-cone, one has
$d_{D^s_T}(y^s_T,y^+)$ is arbitrarily close to $d_{\tilde{D}^s_T}(\tilde{y}^s_T,y^+)$ (reducing $\alpha$ if necessary), hence by \eqref{eq.E-length-control-T}, 
\begin{equation}\label{eq.E-lenth-substitute}
	d_{D^s_T}(y^s_T,y^+)\ge \frac{1}{2}d_{x^+}^E(y^+).
\end{equation}
Note that by the assumption of large $E$-length \eqref{eq.E-large-assumption}, there exists a constant $c_1>0$ which depends only on the integers $s, u$ and the dimension of $\cN(x^+)$, such that 
\[d_{x^+}^E(y^+)\ge c_1d_{\cN(x^+)}(x^+,y^+).\]
It follows from \eqref{eq.E-lenth-substitute} that 
\[d_{D^s_T}(y^s_T,y^+)\ge\frac{1}{2}c_1d_{\cN(x^+)}(x^+,y^+)\ge\frac{1}{2}c_1d(x^+,y^+).\]
As elements of the triple $(D^s_T, \ell^c_T, D^u_T)$ are pairwise almost orthogonal with their dimensions add up to $\dim M$, there exists $c_2>0$ depending only on the integers $s,u$ and $\dim M$, such that 
\begin{align*}
d(x^+,y^+)&\ge c_2\left(d_{D^u_T}(x^+,y^u_T)+\mathrm{length}(\ell^c_T)+d_{D^s_T}(y^s_T,y^+)\right)\\
&\ge c_2\max\{\mathrm{length}(\ell^c_T),d_{D^u_T}(x^+,y^u_T)\}.
\end{align*}
Therefore, we have 
\begin{equation}\label{eq.s-large-at-T}
	d_{D^s_T}(y^s_T,y^+)\ge c_3\max\{\mathrm{length}(\ell^c_T),d_{D^u_T}(x^+,y^u_T)\},
\end{equation}
where $c_3=c_1c_2/2$.

\subsubsection{Backward expansion of $E$-length}\label{sss.4.1.4}

%In this step, we show backward expansion of the $s$-distance between $y^s_i$ and $\tilde{y}_i$, from $i=T$ to $i=0$. 
%This is also the same as in [PYY]. The conclusion of this step is the following: 
By \eqref{eq.s-large-at-T} and the domination $(E^{ss}_{\sigma}\oplus E^c_{\sigma})\oplus_{<} E^u_{\sigma}$, there are constants $c_4>0$ and $\tilde{\lambda}_{\sigma}>1$ such that 
\begin{equation}\label{eq.back-expansion-s-large}
	d_{D^s_{T-i}}(y^s_{T-i},\tilde{y}_{T-i})\ge c_3c_4\tilde{\lambda}_{\sigma}^i\max\{\mathrm{length}(\ell^c_{T-i}),d_{D^u_{T-i}}(x_{T-i},y^u_{T-i})\},
\end{equation}
for $i=0,1,\cdots,T$. 

Moreover, since vectors in the $(\alpha,\mathbb{R}^s)$-cone are exponentially expanded along backward orbits, we obtain
\begin{equation}\label{eq.back-expansion-T-steps}
	d_{D^s_0}(y^s_0,\tilde{y}_0)\ge c_4\tilde{\lambda}_{\sigma}^T d_{D^s_T}(y^s_T,y^+)\ge \frac{1}{2}c_4\tilde{\lambda}_{\sigma}^T d^E_{x^+}(y^+),
\end{equation}
where the second inequality follows from \eqref{eq.E-lenth-substitute}.
%More precise estimates may relate $\tilde{\lambda}_{\sigma}$ to (the inverse of) the largest exponent of $Df_1|_{E^s_{\sigma}}$. 

\subsubsection{Transition back to $(\cF^{cs}_{x,\cN},\cF^{cu}_{x,\cN})$} \label{sss.4.1.5}
Observe that $\tilde{y}=f_{-T}(y^+)$ may not be close to $x$ (this happens when $t^+\gg T = t_x$). 
Let us denote $m=[t^+]-T\ge 0$ and 
\[\tilde{y}_{-k}=f_{-k}(\tilde{y})=f_{-T-k}(y^+),\quad  \forall k=0, 1,\cdots, m.\] 
Note that $\tilde{y}_{-m}=f_{-[t^+]}(y^+)=\hat{y}$ and for the backward iterates from $\tilde{y}_0$ to $\tilde{y}_{-m}$, the $E$-length will be further expanded exponentially. We will use the exponential expansion of $E$-length to show that it remains large at $x$. See Figure \ref{fig.transition-back-to-x} for an illustration.
\begin{figure}[htpb]
	\centering
	\includegraphics[width=.8\textwidth]{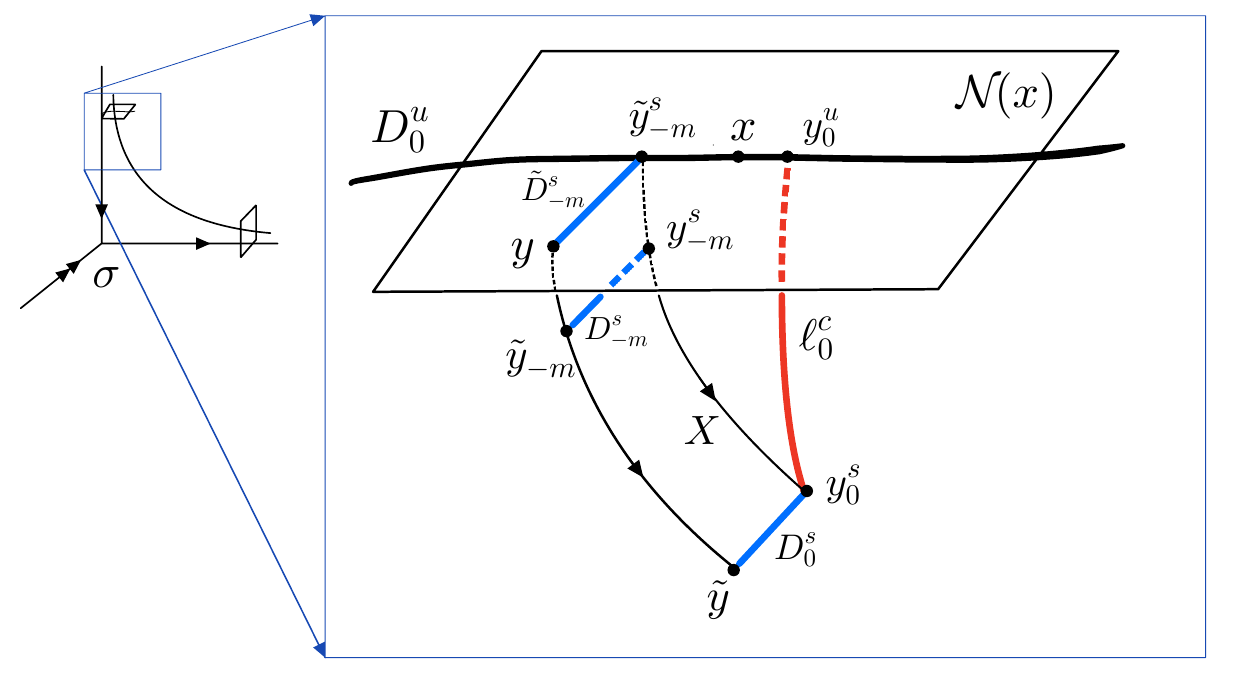}
	\caption{The triple $(D_0^s,\ell^c_0,D^u_0)$ and backward iterates}
	\label{fig.transition-back-to-x}
\end{figure}

If $m=0$, it is obtained in \cite[Proof of Lemma 6.7, Step 3]{PYY25} that one has large $E$-length at $x$. More precisely, it holds $d_x^E(y)\ge d^F_x(y)$. The backward expansion on $E$-length follows from \eqref{eq.back-expansion-T-steps}.

In the following, we assume that $m\ge 1$. 
For each $k=1,\cdots, m$, let $D^s_{-k}=f_{-k}(D^s_0)$. Then $D^s_{-k}$ contains $\tilde{y}_{-k}$ and $y^s_{-k}=f_{-k}(y^s_0)$. Since the orbit segment of $y$ stays close to that of $x$, we may assume without loss of generality that every $D^s_{-k}$ stays inside $B_{r^*}(\sigma)$. By local invariance of the $\cF^{ss}_{\sigma}$ foliation, $D^s_{-k}$ is a $C^1$ disk tangent to the $(\alpha,\mathbb{R}^s)$-cone. 

Let us consider firstly the length of $\ell^c_0$. 
By construction, $\ell^c_T$ joins the points $y^u_T\in D^u_T$ and $y^s_T\in D_{[t^+]}$. In particular, $\ell^c_T$  intersects $D^u_{T+1}$. It follows that $\ell^c_0=f_{-T}(\ell^c_T)$ contains a subsegment that joins $D^u_0$ and $D^u_1$. Since $D^u_0\subset \cN(x)$ and is almost orthogonal to the flow, there exists $\vep_0>0$, independent of $x$, such that the distance between $D^u_0$ and $D^u_1$ is at least $\vep_0|X(x)|$. It follows that 
\[\mathrm{length}(\ell^c_0)\ge\vep_0|X(x)|.\]  
By \eqref{eq.back-expansion-s-large} we have
	\begin{equation*}
		d_{D^s_0}(y^s_0,\tilde{y}_0)\ge c_3c_4\tilde{\lambda}_{\sigma}^T\mathrm{length}(\ell^c_0)\ge \vep_0 c_3c_4\tilde{\lambda}^T_{\sigma} |X(x)|.
	\end{equation*}
	Then the backward expansion of vectors in $(\alpha,\mathbb{R}^s)$-cone implies that 
	\begin{equation}\label{eq.further-back-expansion-s}
		d_{D^s_{-m}}(y^s_{-m},\tilde{y}_{-m})\ge \vep_0 c_3c_4\tilde{\lambda}^{T+m}_{\sigma} |X(x)|.
	\end{equation}
%It seems that, with this inequality and as $T$ large, we can obtain a contradiction to the fact that the orbit of $y$ stays close to $x$ and hence contained in $B_{r^*}(\sigma)$. 

%{To Rusong: I think this already contradicts the assumption (once projected to $\cN(x)$) that $y\in \cN_{\rho_0|X(x)|}(x)$. This makes sense because if $E$ is large at $x$ then $\sigma$ being of Lorenz type implies that the orbit of $y$ must stay in the tubular neighborhood of $x$ (PYY25 already uses this fact), and therefore the shadowing time can be well-controlled. As a result, $m\gg 1$ (which was used to obtain $\length(\ell^c_0) \ge \vep_0|X(x)|$) is impossible in this case. Does this simplify the proof? }

Note that $\tilde{y}_{-m}=\hat{y}$ is close to $y$ and $x$. We can project $D^s_{-m}$ (which is almost parallel to $\cN(x)$) along the flow lines to $\cN(x)$ via the holonomy map, obtaining an $s$-dimensional disk $\tilde{D}^s_{-m}$ tangent to $(2\alpha, N^{cs}(x))$-cone, together with a point $\tilde{y}^s_{-m}$ which is the projection of $y^s_{-m}$. Observe that the projection of $\tilde{y}_{-m}$ to $\cN(x)$ is precisely $y$, and $\tilde{y}^s_{-m}\in D^u_0$.  Therefore, we have 
\begin{equation}\label{eq.s-length-compare-x}
	d_{\tilde{D}_{-m}^s}(\tilde{y}^s_{-m},y)\in(1-c_{\delta}\alpha, 1+c_{\delta}\alpha)d_{D^s_{-m}}(y^s_{-m},\tilde{y}_{-m}),
\end{equation}
where $c_{\delta}>0$ is a constant that remains bounded as $\delta\to 0$. 

On $\cN(x)$, we now have the following objects:
\begin{itemize}
	\item $\tilde{D}^s_{-m}$ and $\cF^{cs}_{x,\cN}(y)$: they are both tangent to the $(2\alpha, N^{cs}(x))$-cone and contains $y$;
	\item $D^u_0$ and $\cF^{cu}_{x,\cN}(x)$: they are both tangent to the $(2\alpha,N^{cu}(x))$-cone and contains $x$;
	\item $\tilde{D}^s_{-m}\pitchfork D^u_0=\{\tilde{y}^s_{-m}\}$ and $\cF^s_{x,\cN}(y)\pitchfork \cF^{cu}_{x,\cN}(x)=\{y^F\}$.
\end{itemize}
Note that $N^{cs}(x)$ and $N^{cu}(x)$ are almost orthogonal to each other, we have that 
\begin{equation}\label{eq.dist-x-y}
	d_{\cN(x)}(x,y)\ge (1-c'_{\delta}\alpha)\max\{d_x^E(y), d_x^F(y)\},
\end{equation}
where $c'_{\delta}>0$ is some constant that remains bounded as $\delta\to 0$. Moreover, since $y\in \cN(x)\subset\cN_{\rho_0|X(x)|}(x)$, we have 
\[d_{\cN(x)}(x,y)\le \rho_0|X(x)|.\]
From this relation and \eqref{eq.further-back-expansion-s}, \eqref{eq.dist-x-y}, we obtain
\begin{align}
	d_{D^s_{-m}}(y^s_{-m},\tilde{y}_{-m})&\ge \rho_0^{-1}\vep_0 c_3c_4\tilde{\lambda}^{T+m}_{\sigma} d_{\cN_x}(x,y) \notag\\
	&\ge \rho_0^{-1}\vep_0 c_3c_4\tilde{\lambda}^{T+m}_{\sigma} (1-c'_{\delta}\alpha)\max\{d_x^E(y), d_x^F(y)\}.\label{eq.s-length-rel}
\end{align}
We now claim that 
\[d_{\tilde{D}^s_{-m}}(\tilde{y}^s_{-m},y)\ge d_{D^u_0}(\tilde{y}^{s}_{-m},x),\] 
by reducing $\alpha$ and $\bar{r}$ if necessary. If not, applying \cite[Lemma 6.4]{PYY25} we would have 
\[d_{x}^F(y)\in (0.9,1.1)d_{D^u_0}(\tilde{y}^{s}_{-m},x).\]
Hence by \eqref{eq.s-length-compare-x}, \eqref{eq.s-length-rel},
\begin{align*}
	d_{\tilde{D}_{-m}^s}(\tilde{y}^s_{-m},y)&\ge (1-c_{\delta}\alpha)d_{D^s_{-m}}(y^s_{-m},\tilde{y}_{-m})\\
	&\ge (1-c_{\delta}\alpha)\rho_0^{-1}\vep_0 c_3c_4\tilde{\lambda}^{T+m}_{\sigma} (1-c'_{\delta}\alpha) d_x^F(y)\\
	&\ge \frac{10}{9}(1-c_{\delta}\alpha)\rho_0^{-1}\vep_0 c_3c_4\tilde{\lambda}^{T+m}_{\sigma} (1-c'_{\delta}\alpha) d_{D^u_0}(\tilde{y}^{s}_{-m},x).
\end{align*}
As $T>0$ can be large enough (by reducing $\bar{r}$), we obtain a contradiction. This proves the claim. 

By the claim and applying \cite[Lemma 6.4]{PYY25}, we can further reducing $\alpha>0$ if necessary such that
\begin{equation}\label{eq.E-length-s-length-rel}
	d_x^E(y)\in (0.9,1.1)d_{\tilde{D}^s_{-m}}(\tilde{y}^s_{-m},y).
\end{equation}
It follows from this relation and \eqref{eq.s-length-compare-x},   \eqref{eq.s-length-rel} that 
\begin{align*}
	d_x^E(y)&\ge 0.9(1-c_{\delta}\alpha)d_{D^s_{-m}}(y^s_{-m},\tilde{y}_{-m})\\
	&\ge 0.9(1-c_{\delta}\alpha)\rho_0^{-1}\vep_0 c_3c_4\tilde{\lambda}^{T+m}_{\sigma} (1-c'_{\delta}\alpha)d_x^F(y)\\
	&\ge d_x^F(y),
\end{align*}
as $T$ is large enough. This proves the large $E$-length at $x$.

To see backward exponential expansion of $E$-length, note firstly that backward expansion of vectors in $(\alpha,\mathbb{R}^s)$-cone implies  
\[d_{D^s_{-m}}(y^s_{-m},\tilde{y}_{-m})\ge d_{D^s_0}(y^s_0,\tilde{y}_0).\]
Then by \eqref{eq.E-length-s-length-rel}, \eqref{eq.s-length-compare-x}, and \eqref{eq.back-expansion-T-steps}, we have 
\begin{align*}
	d_x^E(y) &\ge \frac{1}{4}d_{D^s_{-m}}(y^s_{-m},\tilde{y}_{-m})\ge  \frac{1}{4}d_{D^s_0}(y^s_0,\tilde{y}_0)\\
	&\ge  \frac{1}{8}c_4\tilde{\lambda}_{\sigma}^T d^E_{x^+}(y^+).
\end{align*} 
As $T$ can be arbitrarily large by letting $0<r\le \bar{r}\to 0$, we can take $\lambda_{\sigma}\in (1,\tilde{\lambda}_{\sigma})$ such that 
\[d_x^E(y)\ge \lambda_{\sigma}^Td^E_{x^+}(y^+).\]
This completes the proof of Lemma \ref{lem.E-large} in the case when $\sigma$ is Lorenz-like.

\subsection{Proof of Lemma \ref{lem.F-large} when $\sigma$ is Lorenz-like}
First we note that all the construction in Sections \ref{sss.4.1.1} and \ref{sss.4.1.2} still apply. This means that the objects $(D^s_t,\ell^c_t,D^u_t)$ as well as their reference points $y^\iota_t$, $\tilde y_t$ are still well-defined.

Next, observe that we must have $d^F_{x^+}(y^+)\ge d^E_{x^+}(y^+)$ (i.e. large $F$-length at $x^+$). If otherwise, we are in the settings of Lemma \ref{lem.E-large} and it would follow from the same argument that $d^E_{x}(y)>d^F_x(y)$ (in fact exponentially large), a contradiction to the assumption of Lemma \ref{lem.F-large}. 
Hence, we only need to show forward exponential expansion of the $F$-length. %We define the same objects and use the same notations as in the proof of Lemma \ref{lem.E-large}. 

\subsubsection{A first implication of large $F$-length at $x$}
 
We can project the objects $D_{-m}^s$, $y^s_{-m}$ via the local Poincar\'e map $\cP_x$ to the normal manifold $\cN(x)$, obtaining $\tilde{D}_{-m}^s$ and $\tilde{y}^s_{-m}$ as in Section \ref{sss.4.1.5}. 
Applying \cite[Lemma 6.4]{PYY25}, the large $F$-length condition \eqref{eq.F-large-assumption} implies that 
\begin{equation}\label{eq.large-F-x}
	d_{D^u_0}(x,\tilde{y}_{-m}^s)\ge\max\left\{ 0.9 d_x^F(y), \frac{1}{2}d_{\tilde{D}_{-m}^s}(\tilde{y}_{-m}^s,y)\right\}.
\end{equation}

\subsubsection{Forward expansion}

Following from the uniform expansion along $(\alpha,\mathbb{R}^u)$-cone,  there are $c_1>0$ and $\tilde{\lambda}_{\sigma}>1$ such that 
\begin{equation}\label{eq.u-length-expansion}
	d_{D^u_i}(x_i,y^u_i)\ge c_1\tilde{\lambda}^i_{\sigma}d_{D^u_0}(x,y^u_0),\quad i=0,1,\cdots,T.
\end{equation}

Now, we project $\ell^c_0$ to $\cN(x)$ along the flow lines, obtaining a curve $\tilde{\ell}^c_0\subset D^u_0$. Note that $\tilde{\ell}^c_0$ joins $\tilde{y}_{-m}^s$ and $y^u_0$. Moreover, since the time from a point on $\ell^c_0$ to $\cN(x)$ is bounded from above, the curve $\ell^c_0$ is $C^1$. 
We also project $\ell^c_T$ to $\cN(x^+)$ along the flow lines, obtaining a $C^1$ curve $\tilde{\ell}^c_T$. Since $\ell^c_T$ is tangent to the $(\alpha,\mathbb{R}^c)$-cone and almost parallel to $\cN(x^+)$, by reducing $\alpha$ and $\delta$ if necessary, we may assume that 
\begin{equation}\label{eq.c-length-T}
 2\mathrm{length}(\ell^c_T) \ge	\mathrm{length}(\tilde{\ell}^c_T)\ge \frac{1}{2}\mathrm{length}(\ell^c_T).
\end{equation}
Note that the curve $\tilde{\ell}^c_T$ is contained in the disk $D_T$ which is tangent to the $(\alpha,\mathbb{R}^c\oplus \mathbb{R}^u)$-cone. 
By the sectional hyperbolicity near the Lorenz-like singularity $\sigma$, the area of parallelogram tangent to $(\alpha,\mathbb{R}^c\oplus \mathbb{R}^u)$-cone is forward expanding. This implies that, decreasing $c_1$ and $\tilde{\lambda}_{\sigma}$ if necessary, 
\begin{equation}\label{eq.sect-exp-c}
	\mathrm{length}(\tilde{\ell}^c_T))|X(x^+)|\ge c_1\tilde{\lambda}^T_{\sigma}\mathrm{length}(\tilde{\ell}^c_0)|X(x)|.
\end{equation}
Since the flow speed $|X(\cdot)|$ depends Lipschitz continuously on $d(\cdot, \sigma)$, and $x, x^+$ are both contained in $\partial B_{r_0}(\sigma)$, we have 
\[|X(x)|/|X(x^+)|>c_2,\]
for some constant $c_2>0$. It follows from \eqref{eq.c-length-T} and \eqref{eq.sect-exp-c} that 
\begin{equation}\label{eq.c-length-expansion}
	\mathrm{length}(\ell^c_T)\ge \frac{1}{2}\mathrm{length}(\tilde{\ell}^c_T)\ge \frac{1}{2}c_1c_2\tilde{\lambda}^T_{\sigma}\mathrm{length}(\tilde{\ell}^c_0).
\end{equation}

Observe that inside $D^u_0$ we have 
\[d_{D^u_0}(x,y^u_{0})+\mathrm{length}(\tilde{\ell}^c_0)\ge d_{D^u_0}(x,\tilde{y}^s_{-m}).\]
Then by \eqref{eq.u-length-expansion} and \eqref{eq.c-length-expansion}, 
\begin{align}
	d_{D^u_T}(x_T,y^u_T)+\mathrm{length}(\ell^c_T) &\ge c_3\tilde{\lambda}_{\sigma}^T \big(d_{D^u_0}(x,y^u_0)+\mathrm{length}(\tilde\ell^c_0)\big)\notag\\
	&\ge c_3\tilde{\lambda}_{\sigma}^T d_{D^u_0}(x,\tilde{y}^s_{-m}), \label{eq.c_3}
\end{align}
where $c_3=\min\{c_1, c_1c_2/2\}$.

\subsubsection{Transition from $(D^s_T,\ell^c_T,D^u_T)$ to $(\cF^{cs}_{x^+,\cN},\cF^{cu}_{x^+,\cN})$}
We refer the reads to Figure \ref{fig.near-x+} and \ref{fig.transition-back-to-x} for the objects. 
As is shown that on $\cN(x^+)$, we have $d^F_{x^+}(y^+)\ge d^E_{x^+}(y^+)$. It follows from \cite[Lemma 6.4]{PYY25} that 
\begin{align}
   d_{\tilde{D}_T^{cu}}(x^+,\tilde{y}^s_T)\in (0.9, 1.1)d^F_{x^+}(y^+), \quad\text{and} \notag\\
	d_{\tilde{D}_T^{cu}}(x^+,\tilde{y}^s_T)\ge \frac{1}{2} d_{\tilde{D}^s_T}(\tilde{y}_T^s,y^+). \label{eq.F-large-consequence}
\end{align} 
Since $\tilde{D}^{cu}_T$ and $\tilde{D}^s_T$ are tangent to the $(2\alpha,N^{cu}(x^+))$-cone and $(2\alpha,N^{cs}(x^+))$-cone, respectively, there exists a constant $c_4>0$ depending only on the integers $s$ and $u$ such that
\begin{equation}\label{eq.c_4}
	d_{\tilde{D}_T^{cu}}(x^+,\tilde{y}^s_T)\ge c_4d_{\cN(x^+)}(x^+,y^+)\ge c_4 d(x^+,y^+).
\end{equation}
Since $D^u_T$, $\ell^c_T$ and $D^s_T$ are almost pairwise orthogonal, there exists a constant $c_5>0$ depending only on $s, u$ and $\dim M$ such that
\[d(x^+,y^+)\ge c_5 \max\left\{d_{D^u_T}(x^+,y^u_T),\mathrm{length}(\ell^c_T), d_{D^s_T}(y^s_T,y^+)\right\}.\]
Then,  there exists $c_6>0$ depending only on $s, u$ and $\dim M$ such that
\begin{equation}\label{eq.c_6}
	d(x^+,y^+)\ge c_6\left(d_{D^u_T}(x^+,y^u_T)+\mathrm{length}(\ell^c_T)\right).
\end{equation}
In total, we have 
\begin{align*}
	d^F_{x^+}(y^+) 
	&\ge \frac{1}{2}d_{\tilde{D}^{cu}_T}(x^+,\tilde{y}^s_T) %\ge \frac{1}{4}d_{\tilde{D}^s_T}(\tilde{y}^s_T,y^+) 
	&& 
	\text{by \eqref{eq.F-large-consequence}}\\
	&\ge \frac{1}{4}c_4d(x^+,y^+) &&\text{by \eqref{eq.c_4}}\\
	&\ge \frac{1}{4}c_4c_6(d_{D^u_T}(x^+,y^u_T)+\mathrm{length}(\ell^c_T)) &&\text{by \eqref{eq.c_6}}\\
	&\ge \frac{1}{4}c_3c_4c_6\tilde{\lambda}^T_{\sigma}d_{D^u_0}(x,\tilde{y}^s_{-m}) &&\text{by \eqref{eq.c_3}}\\
	&\ge \frac{1}{8}c_3c_4c_6\tilde{\lambda}^T_{\sigma}d_x^F(y). &&\text{by \eqref{eq.large-F-x}}
\end{align*}
Since the constants does not depend on $r$ or $T$, we can fix $\lambda_{\sigma}\in (1,\tilde{\lambda}_{\sigma})$ and reduce $\bar{r}$ (therefore increase $T$), such that $\frac{1}{8}c_3c_4c_6\tilde{\lambda}^T_{\sigma}\ge \lambda_{\sigma}^T$. This concludes the proof of Lemma \ref{lem.F-large} in the case that $\sigma$ is Lorenz-like.

\subsection{Completing the proof of Lemma \ref{lem.E-large} and Lemma \ref{lem.F-large}}\label{ss.4.3}

We are now left to prove Lemma \ref{lem.E-large} and Lemma \ref{lem.F-large} for the case when $\sigma$ is reversed Lorenz-like. By definition, $\sigma$ is reversed Lorenz-like if it is Lorenz-like for the reversed flow $-X$. 
Observe that the fake foliations constructed for $X$ remain fake foliations for $-X$, and they give also a coordinate system along regular orbits, only that the $E$-length and $F$-length are exchanged to each other. 

To be clear, let us denote $Y=-X$, and suppose $\sigma$ is reversed Lorenz-like for $X$. In other words, $\sigma$ is Lorenz-like for $Y$. If we have large $E$-length at $x^+$ (for $X$), then we have large $F$-length at $x^+$ for $Y$. Observe that, for $Y$, the orbit of $x$ approaches $\sigma$ from $x^+$ and leaves the neighborhood of $\sigma$ at $x$. Thus, when $\sigma$ is reversed Lorenz-like (for $X$), one proves Lemma \ref{lem.E-large}  by simplying applying Lemma \ref{lem.F-large} to $Y$ for the singularity $\sigma$ (which is Lorenz-like for $Y$). Similarly, when $\sigma$ is reversed Lorenz-like (for $X$), one proves Lemma \ref{lem.F-large} by applying Lemma \ref{lem.E-large} to $Y$. 

This finishes the proof of Lemma \ref{lem.E-large} and Lemma \ref{lem.F-large} for all cases.

\begin{remark}\label{r.par}
	We finish this section by noting that all the parameters in Lemma \ref{lem.E-large} and \ref{lem.F-large} can be taken uniformly in a $C^1$ small neighborhood of the vector field $X$. Indeed, the constants $\overline r_0, \overline r$, $\delta_0$ $\sigma_\sigma$ all depend on the hyperbolicity of the singularities, which is robust under $C^1$ perturbation.  
\end{remark}

\section{Expansivity: proof of Theorem \ref{m.e}}\label{s.5}
{In this section we prove Theorem \ref{m.e} for the vector field $X$ itself, under the extra assumption that $\Lambda$ is isolated, i.e., it is the maximal invariant set of an open neighborhood $U_\Lambda\supset \Lambda$. If this is not true, then just take a sufficiently small neighborhood $U_\Lambda$ of $\Lambda$ and replace $\Lambda$ by $\Lambda_X$, the maximal invariant set of $X$ in $U_\Lambda$. By \cite{CLYZ}, if $U_\Lambda$ is sufficiently small, then $\Lambda_X$ is also multi-singular hyperbolic with the same singularities as $\Lambda$, and our proof below applies to $\Lambda_X$ in a verbatim way. Since being multi-singular hyperbolic is a $C^1$ open property (see \cite{CLYZ}), the same proof also applies to all vector fields $Y$ in a $C^1$ small neighborhood of $X$, and it is straightforward to check the uniformity of $\delta_K$  using the uniformity of constants in \ref{lem.E-large} and \ref{lem.F-large} for nearby vector fields; see Remark \ref{r.par}.
	 
}

Given a vector field $X\in\xX^1(M)$, let $\rho_0>0, K_0>1$ be constants given by Lemma \ref{lem.liao-size}. 
Let $\Lambda$ be a multi-singular hyperbolic set of $X$, which admits a dominated splitting of normal bundle $N_{\Lambda}=N^{cs}\oplus N^{cu}$. 
As in Section \ref{sect.coord-reg}, we take $\beta>0$ and assume $\rho_0$ to be small enough so that \eqref{eq.assumption-on-rho0} holds, and the leaves of the fake foliations $\cF^{cs}_{x,\cN}$, $\cF^{cu}_{x,\cN}$ given by Proposition \ref{prop.ff-regular} are tangent to the $\beta$-cones around the $N^{cs}$, $N^{cu}$ bundles, respectively. Below, we will further reduce $\beta$ and $\rho_0$.

Let $\alpha>0$ and $r^*>r_0>0$ be small enough such that Proposition \ref{prop.ff-singularity} holds for every $\sigma\in\Lambda\cap\Sing(X)$. 
Reducing $r_0$ if necessary, there exists $\bar{r}\in (0,r_0)$, $\delta_0>0$ such that for any $r\in (0,\bar{r})$ and $\delta\in (0,\delta_0)$, the conclusions in Lemma \ref{lem.E-large} and \ref{lem.F-large} hold. 
Let us denote 
\[V:=\bigcup_{\sigma\in\Lambda\cap\Sing(X)}\overline{B_{r_0}(\sigma)}.\]
Assume $r_0$ is small enough such that $V$ is a compact isolating neighborhood of $\Lambda\cap\Sing(X)$. 
By multi-singular hyperbolicity of $\Lambda$, there exist constants $\eta, T_V>0$ such that 
\begin{equation}\label{eq.muti-sing-hyp-prime}
			\|\psi_t|_{N^{cs}(x)}\|<e^{-\eta t}, \quad \|\psi_{-t}|_{N^{cu}(f_t(x))}\|<e^{-\eta t},
\end{equation}
whenever $x,f_t(x)\in\Lambda\setminus V$ and $t\ge T_V$;

By Proposition \ref{prop.ff-regular} and uniform continuity of $D\cP^*_{1,x}$ (see \cite[Lemma 2.5]{GY}), we can reduce $\beta$ and $\rho_0$ such that for any $x\in\Lambda\setminus\Sing(X)$ and $y\in\cN_{\rho_0|X(x)|}(x)$, it holds
\begin{gather}
	d^F_{x_1}(\cP_{1,x}(y))\ge e^{-\eta/10}\cdot m(\psi_1|_{N^{cu}(x)})\cdot d^F_x(y),\label{eq.F-expansion} \\
\quad d^E_{x_{-1}}(\cP_{1,x_{-1}}^{-1}(y))\ge e^{-\eta/10} \cdot m(\psi_{-1}|_{N^{cs}(x)})\cdot d^E_x(y),\label{eq.E-contraction}
\end{gather}
where $x_{\pm 1}=f_{\pm 1}(x)$ as before. 

Given $r\in (0,\bar{r})$, we denote, as in Lemma \ref{lem.E-large} and \ref{lem.F-large},
\[W_r:=\bigcup_{\sigma\in\Lambda\cap\Sing(X)}W_r(\sigma)\subset V,\]
where $W_r(\sigma)$ is defined in the beginning of Section \ref{sect.est-sing}. Note that each $W_r(\sigma)$ depends also on the constant $r_0$. 
By reducing $r_0$ and $r$ if necessary, for any $\sigma\in\Lambda\cap\Sing(X)$ and $x\in \partial W^-_r(\sigma)\cap W_{loc}^s(\sigma)$, it holds that $W^s_{loc}(\sigma)\cap \cN_{\rho_0|X(x)|}(x)$ is a $C^1$ disk tangent to the cone $C^{cs}_{\beta}$ on $\cN_{\rho_0|X(x)|}(x)$. Assuming $\beta$ to be small enough, any point on $W^s_{loc}(\sigma)\cap \cN_{\rho_0|X(x)|}(x)$ must have large $E$-length, i.e.
\begin{equation}\label{eq.E-large-upon-entering}
	d_x^E(\tilde{y})> d_x^F(\tilde{y}),\quad\forall \tilde{y}\in W^s_{loc}(\sigma)\cap \cN_{\rho_0|X(x)|}(x)\setminus\{x\}.
\end{equation}
Similarly, for any $x\in \partial W^+_r(\sigma)\cap W^u_{loc}(\sigma)$, any point on $W^u_{loc}(\sigma)\cap\cN_{\rho_0|X(x)|}(x)$ must have large $F$-length, i.e.
\begin{equation}\label{eq.F-large-upon-leaving}
	d_x^F(\tilde{y})> d_x^E(\tilde{y}),\quad\forall \tilde{y}\in W^u_{loc}(\sigma)\cap \cN_{\rho_0|X(x)|}(x)\setminus\{x\}.
\end{equation} 

The next lemma and corollary will be used to switch $x$ and $y$ when necessary (see Remark \ref{r.switch})
\begin{lemma}\label{lem.switch-base-pf-expansivity}
	By reducing $\alpha$ and $\delta_0$ if necessary, for any $\delta\in (0,\delta_0)$ and $x\in\partial W^-_r\cup\partial W^+_r$, $y\in \cN_{\rho_0|X(x)|}(x)$, suppose $d(x,y)<\delta$, 
\begin{itemize}
\item if $d_{\cF^{cu}_{x,\cN}}(y^E,y)\ge 0.99 d_{\cF^{cs}_{x,\cN}}(x,y^E)$, then 
\[d_y^F(\tilde{x}) = d_{\cF^{cu}_{y,\cN}}(y,\tilde{x}^F) \ge  0.9d_{\cF^{cs}_{y,\cN}}(\tilde{x}^F, \tilde{x}) = 0.9 d_y^E(\tilde{x}),\]
where $\tilde{x}=\cP_y(x)\in \cN_{\rho_0|X(y)|}(y)$. 
\item similarly, if $d_{\cF^{cs}_{x,\cN}}(x,y^E)\ge 0.99d_{\cF^{cu}_{x,\cN}}(y^E,y)$, then 
\[ d_y^E(\tilde{x}) = d_{\cF^{cs}_{y,\cN}}(\tilde x^F,\tilde x) \ge  0.9d_{\cF^{cu}_{y,\cN}}(y,\tilde{x}^F) = 0.9 d_y^F(\tilde{x}).\]
\end{itemize}
\end{lemma}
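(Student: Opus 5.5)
The plan is to reduce everything to a statement about nearly linear objects in a single normal plane by working in the scaled coordinates. Since $x\in\partial W^\pm_r$, the flow speed $|X(x)|$ is bounded below by a constant depending only on $r$ and $r_0$. Taking $\delta_0$ small relative to $\inf\{|X(z)| : z\in\partial W^\pm_r\}$ makes $d(x,y)/|X(x)|$ as small as we like. Two things then follow. First, $X(y)$ is close to $X(x)$ in direction, so the normal space $N(y)$, transported to $T_xM$ by the exponential charts, is $C^0$-close to $N(x)$. Second, the holonomy $\cP_y$ restricted to $\cN_{\rho_0|X(x)|}(x)$ is a $C^1$ diffeomorphism onto its image that is $C^1$-close to the identity in the scaled coordinates $S^{-1}\circ\exp^{-1}$.

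The second ingredient is continuity of the splitting and of the fake foliations in the base point. The bundles $N^{cs},N^{cu}$ are continuous on $\Lambda\setminus\Sing(X)$. If $y\notin\Lambda$, I would use a continuous extension of the dominated splitting to a neighborhood, which is already implicit in the construction of the fake foliations for nearby points. Because of this continuity, the image under $\cP_y$ of the cone $C^{\xi}_\beta$ at $x$ lies in $C^{\xi}_{2\beta}$ at $y$ for $\xi=cs,cu$. The leaves of $\cF^{cs}_{y,\cN}$ and $\cF^{cu}_{y,\cN}$ are tangent to the $\beta$-cones at $y$.

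With this in hand, the argument in $\cN(y)$ goes as follows. Take the first bullet and set $a=d_{\cF^{cs}_{x,\cN}}(x,y^E)$ and $b=d_{\cF^{cu}_{x,\cN}}(y^E,y)$, so that $b\ge 0.99a$. In $\cN(x)$ the path from $y$ to $x$ consists of a $cu$-leaf segment of length $b$ from $y$ to $y^E$, followed by a $cs$-leaf segment of length $a$ from $y^E$ to $x$. Project this broken path by $\cP_y$. It becomes a path in $\cN(y)$ from $y$ to $\tilde x$. The first piece is tangent to $C^{cu}_{2\beta}$ and has length in $(1\pm\epsilon)b$; the second is tangent to $C^{cs}_{2\beta}$ and has length in $(1\pm\epsilon)a$. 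On the other side, $\tilde x^F$ is defined by the broken path $y\to\tilde x^F\to\tilde x$ along $\cF^{cu}_{y,\cN}(y)$ and then $\cF^{cs}_{y,\cN}(\tilde x)$.

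Both broken paths join the same two endpoints, and each is made of one piece close to $N^{cu}(y)$ and one piece close to $N^{cs}(y)$. Since $N^{cs}(y)$ and $N^{cu}(y)$ are uniformly transverse, I would invoke the comparison lemma \cite[Lemma 6.4]{PYY25} used repeatedly above (or a direct linear-algebra estimate, as in Corollary \ref{cor.switch-base}). It gives that the $cu$-lengths and the $cs$-lengths of the two decompositions agree up to a factor $1\pm c(\beta+\epsilon)$ times $\max\{a,b\}$. There is one subtlety: the order is reversed, since one path runs $cu$ then $cs$ and the other runs $cu$ then $cs$ from the opposite base point. Because the leaves are almost flat on this tiny scale, this only costs another factor $1\pm c\beta$. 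Hence $d^F_y(\tilde x)\ge(1-c')b$ and $d^E_y(\tilde x)\le(1+c')a+c'b$, and so $d^F_y(\tilde x)\ge 0.9\,d^E_y(\tilde x)$ once $\beta$, $\alpha$ and $\delta_0$ are small. The second bullet follows by the symmetric argument.

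The main obstacle I expect is the uniformity of the continuity of the fake foliations in the base point near $\partial W^\pm_r$, together with the possibility that $y\notin\Lambda$. There the relevant scale is $\rho_0|X(x)|$ rather than a fixed size. I would handle it by fixing $r$ first, so that $|X|$ is bounded below on $\partial W^\pm_r$ and everything becomes uniform on a compact set, and only afterwards choosing $\delta_0$.
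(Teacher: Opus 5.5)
Your proposal is correct and follows essentially the same route as the paper: push the broken path $y\to y^E\to x$ into $\cN_{\rho_0|X(y)|}(y)$ by $\cP_y$, note that its two pieces stay tangent to slightly wider cones with length ratio still roughly at least $0.98$, and compare with the corner $\tilde x^F$ of the fake foliations at $y$ via \cite[Lemma 6.4]{PYY25}. The ``order reversal'' you flag does not occur, since both broken paths run along a $cu$-leaf through $y$ and then a $cs$-leaf through $\tilde x$. Your argument also never needs continuity of the fake foliations in the base point, only continuity of the splitting $N^{cs}\oplus N^{cu}$ and cone tangency, so both of your worries are harmless.
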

\begin{proof}
    Let us define 
    $\tilde{\cF}^{cu}_{y,\cN}(y):=\cP_y(\cF^{cu}_{x,\cN}(y))$ and $\tilde{\cF}^{cs}_{y,\cN}(\tilde{x}):=\cP_y(\cF^{cs}_{x,\cN}(x))$. 
	By reducing $\delta_0$ if necessary, we have that $\cN_{\rho_0|X(y)|}(y)$ are almost parallel to $\cN_{\rho_0|X(x)|}(x)$ such that 
	\begin{itemize}
		\item $\tilde{\cF}^{cu}_{y,\cN}(y), \tilde{\cF}^{cs}_{y,\cN}(\tilde{x})$ are tangent the cones $C^{cu}_{2\alpha}$ and $C^{cs}_{2\alpha}$, respectively;
		\item moreover, $d_{\tilde{\cF}^{cu}_{y,\cN}(y)}(y,\tilde{y}^E)\ge 0.98d_{\tilde{\cF}^{cs}_{y,\cN}(\tilde{x})}(\tilde{y}^E,\tilde{x})$, where $\tilde{y}^E=\cP_y(y^E)$.
	\end{itemize}
	Then by \cite[Lemma 6.4]{PYY25} and reducing $\alpha$ if necessary, one obtains 
	\[d_{\cF^{cu}_{y,\cN}}(y,\tilde{x}^F)\ge 0.9d_{\cF^{cs}_{y,\cN}}(\tilde{x}^F, \tilde{x}),\]
	where $\tilde{x}^F$ is the unique intersection of $\cF^{cu}_{y,\cN}(y)$ with $\cF^{cs}_{y,\cN}(\tilde{x})$. By our notations of $E$-length and $F$-length, this is exactly the inequality for the first item. The second item can be proved similarly. 
\end{proof}
Combining Lemma \ref{lem.switch-base-pf-expansivity} and Corollary \ref{cor.switch-base} we obtain the following result. 
\begin{corollary}\label{cor.switch-base-pf-expansivity}
	By reducing $\alpha$ and $\delta_0$ if necessary, for any $\delta\in (0,\delta_0)$ and $x\in\partial W^-_r\cup\partial W^+_r$, $y\in \cN_{\rho_0|X(x)|}(x)$, suppose $d(x,y)<\delta$, 
	\begin{itemize}
		\item if $d_x^F(y)\ge d_x^E(y)$, then $d_y^F(\tilde{x})\ge 0.9 d^E_y(\tilde{x})$;
		\item if $d_x^E(y)\ge d_x^F(y)$, then $d_y^E(\tilde{x})\ge 0.9 d^F_y(\tilde{x})$.
	\end{itemize}
\end{corollary}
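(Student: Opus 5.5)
The corollary will follow by chaining Corollary \ref{cor.switch-base} with Lemma \ref{lem.switch-base-pf-expansivity}; the plan is to treat the two items separately, since they are symmetric.

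For the first item, assume $d_x^F(y)\ge d_x^E(y)$. By the second bullet of Corollary \ref{cor.switch-base}, applied at the base point $x$ with $\beta$ small, this gives
\[
d_{\cF^{cu}_{x,\cN}}(y^E,y)\ge 0.99\, d_{\cF^{cs}_{x,\cN}}(x,y^E).
\]
This is exactly the hypothesis of the first bullet of Lemma \ref{lem.switch-base-pf-expansivity}. Invoking that lemma with $\tilde x=\cP_y(x)$, and using $d(x,y)<\delta<\delta_0$, yields $d_y^F(\tilde x)\ge 0.9\,d_y^E(\tilde x)$.

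For the second item, assume $d_x^E(y)\ge d_x^F(y)$. The first bullet of Corollary \ref{cor.switch-base} converts this into
\[
d_{\cF^{cs}_{x,\cN}}(x,y^E)\ge 0.99\, d_{\cF^{cu}_{x,\cN}}(y^E,y).
\]
The second bullet of Lemma \ref{lem.switch-base-pf-expansivity} then gives $d_y^E(\tilde x)\ge 0.9\,d_y^F(\tilde x)$.

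The only point needing care is the compatibility of constants: a single choice of $\beta,\alpha,\delta_0$ must serve both results. Corollary \ref{cor.switch-base} only requires $\beta$ small and is independent of $\alpha$ and $\delta_0$. Lemma \ref{lem.switch-base-pf-expansivity} then reduces $\alpha$ and $\delta_0$ further, and this does not affect the hypotheses already secured.

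One also has to check that $\tilde x$ lies in $\cN_{\rho_0|X(y)|}(y)$, so that the $E$- and $F$-lengths at base point $y$ are defined. This holds because $x\in\partial W_r^\pm$ is at definite distance from the singularities, so $|X|$ is bounded below there and $\delta_0$ is small compared to $\rho_0|X(y)|$.
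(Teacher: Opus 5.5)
Your proposal is correct and matches the paper's argument. The paper obtains this corollary in the same way, by feeding the appropriate bullet of Corollary~\ref{cor.switch-base} into the corresponding bullet of Lemma~\ref{lem.switch-base-pf-expansivity}, with $\beta$ fixed first and $\alpha,\delta_0$ reduced afterwards.
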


By reducing $r$ while keeping $r_0$ fixed, we can require that for any $x\in \partial W^-_r(\sigma)$, suppose $t_x>0$ is the smallest positive constant that $f_t(x)\in \partial W^+_r(\sigma)$, then 
\begin{equation*}
	0.9^2\lambda_{\sigma}^{T/2}C_0\ge 1,
\end{equation*}
where
$$
C_0 : = \sup_{0<t<T_V,z,f_t(z)\notin V}\|\psi_{-t}|_{N^{cu}(f_t(z))}\|,
$$
$\lambda_{\sigma}>1$ is given by Lemma \ref{lem.E-large} and \ref{lem.F-large}, and $T_V>0$ is the constant given by multi-singular hyperbolicity of $\Lambda$ such that \eqref{eq.muti-sing-hyp-prime} holds whenever $t>T_V$. 
And recall that we have 
\begin{equation*}
	\rho_0|X(z)|>\delta_0,\quad \forall z\in U_\Lambda\setminus W_r
\end{equation*}
where $U_\Lambda$ is a small neighborhood of $\Lambda$.

Now, for any $\delta\in (0,\delta_0)$ and for any $x,y\in \Lambda$, suppose 
\begin{equation}\label{eq.delta-close}
	d(f_{\theta(t)}(y),f_t(x))\le \delta,\quad\forall t\in \mathbb{R},
\end{equation} 
where $\theta: \mathbb{R}\to \mathbb{R}$ is a time-reparametrization. 
%If $x\in W^s(\sigma)$ for some singularity $\sigma$, then it follows from hyperbolicity of the singularity and \eqref{eq.delta-close} that $y\in W^s(\sigma)$. Similarly, if $x\in W^u(\sigma)$, then $y\in W^u(\sigma)$. 
%In particular, if $x$ is a singularity, we must have $y=x$. 
We will assume in the following lemma that neither $x$ nor $y$ is a singularity. 
By taking iterates of the points, we may assume that $x\notin\Int(V)$.  
Moreover, to prove Theorem \ref{m.e}, we may assume $\theta(0)=0$, i.e. $d(y,x)\le \delta$, and show that $y$ is contained in $f_{[-\vep,\vep]}(x)$ for some $\vep>0$, which can be arbitrarily small as $\delta\to 0$.\footnote{Note that this conclusion may not hold if $x\in V$. In that case, by taking iteration, we can conclude that there exists $t_0$ for which $f_{\theta(t_0)}(y)\in f_{[t_0-\vep,t_+\vep]}(x)$.}

For the time being we will consider the case 
\[d^F_x(\tilde{y})\ge d^E_x(\tilde{y}),\] 
where  $\tilde{y}=\cP_x(y)$.

We consider the forward orbits of $x$ and $y$. 
Suppose the forward orbit $f_t(x)$ of $x$ enters $W_r$ for the first time at $t=t_1^-$ and then leaves $W_r$ at time $t=t_1^+$. Denote 
\[x_1^-=f_{t_1^-}(x)\in\partial W^-_r,\quad x_1^+=f_{t_1^+}(x)\in\partial W^+_r, \]
and 
\[\tilde{y}^-_1=\cP_{x^-_1}(f_{\theta(t^-_1)}(y)),\quad \tilde{y}^+_1=\cP_{x^+_1}(f_{\theta(t^+_1)}(y)).\]
$\tilde y_1^-$ and $\tilde y_1^+$ are well-defined due to the choice of $\delta_0.$
Let $\Delta s_1=t_1^+-t_1^-$, $\Delta\tilde{s}_1=\inf\{s>0: f_s(\tilde{y}_1^-)=\tilde{y}_1^+\}$, and $\tau_1=\min\{\Delta s_1,\Delta \tilde{s}_1\}$. 
Denote $\Delta t_0=t_1^-$.

\begin{lemma}\label{lem.first-circle}
Assume $d^F_x(\tilde{y})\ge d^E_x(\tilde{y})$ where $\tilde{y}=\cP_x(y)$.	If $\Delta t_0\le T_V$, then 
\[d^F_{x^+_1}(\tilde{y}^+_1)\ge \lambda_{\sigma_1}^{\tau_1/2}d^F_x(y);\]
if $\Delta t_0>T_V$, then 
\[d^F_{x^+_1}(\tilde{y}^+_1)\ge \lambda_{\sigma_1}^{\tau_1/2} e^{0.9\eta \Delta t_0} d^F_x(y).\]

\end{lemma}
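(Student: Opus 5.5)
The plan is to split the orbit of $x$ on $[0,t_1^+]$ into two pieces: the regular segment $[0,t_1^-]$, which stays in $U_\Lambda\setminus W_r$, and the singular passage $[t_1^-,t_1^+]$ through $W_r(\sigma_1)$. On the first piece we track the $F$-length with the sectional Poincaré maps $\cP_{1,\cdot}$ along the orbit of $x$. On the second piece we apply Lemma \ref{lem.F-large}. Throughout, the $F$-dominance condition $d^F\ge d^E$ (with constant $1$ or $0.9$, cf. Remark \ref{r.EF}) must be propagated from each stage to the next.

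\emph{Regular segment.} Since $f_t(x)\notin W_r$ for $t\in[0,t_1^-)$, we have $\rho_0|X(f_t(x))|>\delta_0>\delta$. Hence, by \eqref{eq.delta-close}, the points $\cP_{f_k(x)}(f_{\theta(k)}(y))$ lie in $\cN_{\rho_0|X(f_k(x))|}(f_k(x))$. I will identify them with $\cP_{1,f_{k-1}(x)}$-iterates of $\tilde y$; this uses uniqueness of the intersection with small normal disks and the fact that $\theta$ is increasing.

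By local invariance of the fake foliations (Proposition \ref{prop.ff-regular}) together with the cone condition, $F$-dominance is preserved under forward iteration, since $\cF^{cu}$ is expanded relative to $\cF^{cs}$ by domination. By \eqref{eq.F-expansion}, each unit step multiplies the $F$-length by at least $e^{-\eta/10}m(\psi_1|_{N^{cu}})$. Chaining these factors gives a lower bound of the form $e^{-\eta\Delta t_0/10}\,m(\psi_{\Delta t_0}|_{N^{cu}(x)})$, up to the fractional-time error.

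If $\Delta t_0>T_V$, the endpoints $x$ and $x_1^-$ both lie in $\Lambda\setminus V$, so \eqref{eq.muti-sing-hyp-prime} gives $m(\psi_{\Delta t_0}|_{N^{cu}})\ge e^{\eta\Delta t_0}$. The total factor is then at least $e^{0.9\eta\Delta t_0}$. If $\Delta t_0\le T_V$, the factor is only bounded below by roughly $C_0^{-1}$; this is why $C_0$ appears in the choice of $r$.

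\emph{Singular passage.} At $x_1^-$, the condition $d^F\ge d^E$ holds for $\tilde y_1^-$. To apply Lemma \ref{lem.F-large} we need $t^+\ge t_x$, that is, $\Delta\tilde s_1\ge\Delta s_1$. If instead $\Delta\tilde s_1<\Delta s_1$, we swap the roles of $x$ and $y$ (Remark \ref{r.switch}), use $\theta^{-1}$, and transfer $F$-dominance via Corollary \ref{cor.switch-base-pf-expansivity} at the cost of the factor $0.9$. This swap is why the exponent is $\tau_1=\min\{\Delta s_1,\Delta\tilde s_1\}$.

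Lemma \ref{lem.F-large} then gives $d^F_{x_1^+}(\tilde y_1^+)\ge\lambda_{\sigma_1}^{\tau_1}d^F_{x_1^-}(\tilde y_1^-)$, with $F$-dominance preserved at exit. If we switched, we switch back with Corollary \ref{cor.switch-base-pf-expansivity}, paying a second factor $0.9$.

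\emph{Absorbing constants.} This is the main obstacle. We must spend half of $\lambda_{\sigma_1}^{\tau_1}$, together with the $e^{0.1\eta\Delta t_0}$ slack when $\Delta t_0>T_V$, to absorb the two $0.9$ factors and the constant $C_0^{-1}$. The choice $0.9^2\lambda_\sigma^{T/2}C_0\ge1$ with $\tau_1\ge T$ is designed exactly for this.

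A second delicate point is comparing base points: the lemma's conclusion is stated for $d^F_x(y)$ rather than $d^F_x(\tilde y)$. I would use the fact that $y$ and $\tilde y$ differ by a flow time of order $\delta/|X(x)|$, and absorb the resulting discrepancy into the $e^{-\eta/10}$ margins. I would also check that the projection of the reparametrized orbit matches the iterates of $\cP_{1,\cdot}$ near the entry time, so that the times $\theta(t_1^\pm)$ are consistent with Remark \ref{r.switch}.
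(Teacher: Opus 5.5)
Your proposal is correct and follows the paper's proof essentially step for step. The paper also obtains $d^F\ge d^E$ at $x_1^-$ from domination. It then applies Lemma \ref{lem.F-large} directly when $\Delta s_1\le\Delta\tilde s_1$, and otherwise switches base points through Corollary \ref{cor.switch-base-pf-expansivity} at a cost of $0.9^2$. It bounds the regular segment by iterating \eqref{eq.F-expansion} as in \eqref{eq.t0}, and treats $\Delta t_0\le T_V$ (via $C_0$) and $\Delta t_0>T_V$ (via \eqref{eq.muti-sing-hyp-prime}) separately, spending half of $\lambda_{\sigma_1}^{\tau_1}$ on the constants.

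Two small remarks. First, since $d^F_x(\cdot)$ is only defined on $\cN(x)$, the quantity $d^F_x(y)$ in the statement simply means $d^F_x(\tilde y)$, so no base-point correction is needed. Second, since $C_0$ is a supremum of $\|\psi_{-t}|_{N^{cu}}\|$, the factor to absorb is $C_0^{-1}$, so the condition actually required is $0.9^2\lambda_{\sigma}^{T/2}\ge C_0$. This is harmless, because $T\to\infty$ as $r\to0$ while $C_0$ does not depend on $r$.
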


\begin{proof}
The domination of the splitting $N_{\Lambda}=N^{cs}\oplus N^{cu}$ implies   
\begin{equation}\label{eq.F-large-pf-expansivity}
	d^F_{x^-_1}(\tilde{y}^-_1)\ge d^E_{x^-_1}(\tilde{y}^-_1).
\end{equation}
If $\Delta s_1\le \Delta \tilde{s}_1$, we can apply Lemma \ref{lem.F-large} to obtain that 
\[d^F_{x^+_1}(\tilde{y}^+_1)\ge d^E_{x^+_1}(\tilde{y}^+_1),\quad\text{and}\quad d^F_{x^+_1}(\tilde{y}^+_1)\ge \lambda_{\sigma_1}^{\Delta s_1} d^F_{x^-_1}(\tilde{y}^-_1).\]
If $\Delta s_1>\Delta \tilde{s}_1$, Corollary \ref{cor.switch-base-pf-expansivity} gives  
\[d_{\tilde{y}^-_1}^F(\tilde{x}^-_1)\ge 0.9d_{\tilde{y}^-_1}^E(\tilde{x}^-_1),\]
where $\tilde{x}^-_1=\cP_{\tilde{y}^-_1}(x^-_1)$. 
Then we can apply Lemma \ref{lem.F-large} by using $\tilde{y}^-_1$ as the reference point (keeping in mind Remark \ref{r.EF} and \ref{r.switch}), obtaining 
\[d_{\tilde{y}^+_1}^F(\tilde{x}^+_1)\ge d_{\tilde{y}^+_1}^E(\tilde{x}^+_1),\quad \text{and}\quad d_{\tilde{y}^+_1}^F(\tilde{x}^+_1)\ge \lambda_{\sigma_1}^{\Delta \tilde{s}_1}d_{\tilde{y}^-_1}^F(\tilde{x}^-_1),\]
where $\tilde{x}^+_1=\cP_{\tilde{y}^+_1}(x^+_1)$. 
Consequently, by Corollary \ref{cor.switch-base-pf-expansivity} we have
\[d^F_{x^+_1}(\tilde{y}^+_1)\ge 0.9 d^E_{x^+_1}(\tilde{y}^+_1),\]
and
\[d^F_{x^+_1}(\tilde{y}^+_1)\ge 0.9 d_{\tilde{y}^+_1}^F(\tilde{x}^+_1)\ge 0.9\lambda_{\sigma_1}^{\Delta \tilde{s}_1}d_{\tilde{y}^-_1}^F(\tilde{x}^-_1)\ge 0.9^2\lambda_{\sigma_1}^{\Delta \tilde{s}_1} d^F_{x^-_1}(\tilde{y}^-_1).\]

For the orbit segment from $x$ to $x^-_1$, it follows from \eqref{eq.F-expansion} that 
\begin{equation}\label{eq.t0}
	d^F_{x^-_1}(\tilde{y}^-_1)\ge e^{-\eta\Delta t_0/10}\|\psi_{\Delta t_0}|_{N^{cu}(x)}\| d^F_{x}(\tilde{y}).
\end{equation}
If $\Delta t_0\le T$, we have 
\begin{align*}
	d^F_{x^+_1}(\tilde{y}^+_1)&\ge 0.9^2\lambda_{\sigma_1}^{\tau_1} d^F_{x^-_1}(\tilde{y}^-_1)\\
	&\ge 0.9^2\lambda_{\sigma_1}^{\tau_1}C_0 d^F_x(y)\\
	&\ge \lambda_{\sigma_1}^{\tau_1/2}d^F_x(y).
\end{align*}
If $\Delta t_0>T$, then it follows from \eqref{eq.muti-sing-hyp-prime} and \eqref{eq.F-expansion} that 
\begin{align*}
	d^F_{x^+_1}(\tilde{y}^+_1)&\ge 0.9^2\lambda_{\sigma_1}^{\tau_1} d^F_{x^-_1}(\tilde{y}^-_1)\\
	&\ge 0.9^2\lambda_{\sigma_1}^{\tau_1} e^{-\eta\Delta t_0/10}\|\psi_{\Delta t_0}\|d^F_x(y)\\
	&\ge 0.9^2\lambda_{\sigma_1}^{\tau_1} e^{0.9\eta \Delta t_0} d^F_x(y)\\
	&\ge \lambda_{\sigma_1}^{\tau_1/2} e^{0.9\eta \Delta t_0} d^F_x(y),
\end{align*}
where the last inequality in both cases follows as we assume $\tau_1$ to be large enough. 
This completes the proof of the claim. 
\end{proof}

\begin{proof}[Proof of Theorem \ref{m.e}]
Let $x,y\in \Lambda$ be arbitrary, satisfying $d(f_t(x), f_{\theta(t)}(y))<\delta$ for some time reparametrization (i.e., an orientation-preserving homeomorphism of $\RR$) $\theta$. Further assume, by replacing $x,y$ with other points in the same orbit if necessary, that $y\in B_\delta(x)$ and $\theta(0) = 0$.

Define the set $\Lambda^\dagger = \Lambda\setminus \big(W^s(\Sing(X)\cap\Lambda)\cup W^u(\Sing(X)\cap\Lambda)\big)$.
We consider the following four cases.

 \medskip
\noindent {\em Case 1.} Either $x$ or $y$ is a singularity. In this case, the smallness of $\delta$ (for which $B_{\delta}(\Sing(X)\cap\Lambda)$ is an isolating neighborhood of $\Sing(X)\cap\Lambda$) and the surjectivity of $\theta$ guarantees that $y=x.$ 

\medskip 
\noindent {\em Case 2.} $x,y$ are both regular, and $x\in \Lambda^\dagger$. 
In this case we assume, without loss of generality, that \[d^F_x(\tilde{y})\ge d^E_x(\tilde{y}),\] 
where  $\tilde{y}=\cP_x(y)$. The other case can be proven along the same lines by considering $-X$.

The goal is to show that $d^F_x(\tilde{y})=0$ by considering the forward orbit of $x$. There are two subcases.

\smallskip
\noindent {\em Subcase 2.1.} 
Suppose the forward orbit of $x$ crosses $W_r$ infinitely many times. 
Let $0<t^-_1<t^+_1<t^-_2<t^+_2<\cdots<t^-_k<t^+_k<\cdots$ be the sequence of times such that 
\begin{itemize}
	\item $f_t(x)\in \Lambda\setminus W_r$, for all $t\in [0,t^-_1]$, or $t\in [t^+_k,t^-_{k+1}]$ and $k\in\mathbb{N}_+$;
	\item $f_t(x)\in W_r(\sigma_k)$, for all $t\in(t^-_k,t^+_k)$ and $k\in\mathbb{N}_+$, where $\sigma_k\in\Lambda\cap\Sing(X)$.
	%\item $f_t(x)\in \Lambda\setminus W_r$, for all $t\in (t^+_k,t^-_{k+1})$ and $k\in\mathbb{N}_+$.
\end{itemize}
Let us denote 
\[x_0=x,\quad\text{and}\quad x_k^-=f_{t^-_k}(x),\quad x_k^+=f_{t^+_k}(x),\quad \forall k\in\mathbb{N}_+,\]
and 
\[\Delta t_0=t^-_1, \quad  \Delta s_k=t^+_k-t^-_k,\quad \Delta t_k=t^-_{k+1}-t^+_k,\quad\forall k\in\mathbb{N}_+.\] 
Let $\tilde{y}^-_k=\cP_{x^-_k}(f_{\theta(t^-_k)}(y))$, $\tilde{y}^+_k=\cP_{x^+_k}(f_{\theta(t^+_k)}(y))$ and 
\[\Delta\tilde{s}_k=\inf\{s>0: f_s(\tilde{y}^-_k)=\tilde{y}^+_k\}.\] 
Denote $\Delta_k=\min\{\Delta s_k,\Delta\tilde{s}_k\}$. 
Applying Lemma \ref{lem.first-circle} inductively, one has 
\[d^F_{x^+_k}(\tilde{y}^+_k)\ge \tilde{\lambda}^{S'_k/2}d^F_x(\tilde{y}),\]
where $S'_k=\Delta_1+\cdots+\Delta_k$, and $\tilde{\lambda}>1$ is the minimal of all the constants $\lambda_{\sigma}$. 
Since $d^F_{x^+_k}(\tilde{y}^+_k)$ is uniformly bounded above, and $S'_k\to \infty$ as $k\to\infty$, we must have $d^F_x(\tilde{y})=0$.

\smallskip
\noindent {\em Subcase 2.2.}
Now, suppose the forward orbit of $x$ crosses $W_r$ only $K$ times at 
$0<t_1^-<t_1^+<\cdots<t_K^-<t_K^+$, and after that, it never return to $W_r$. %In particular, $\omega(x)$ is a hyperbolic set (Remark \ref{rmk.msh-nonsingular}). 
In this case, we apply Lemma \ref{lem.first-circle} inductively for the first $K$ times, obtaining
\[d^F_{x^+_K}(\tilde{y}^+_K)\ge \tilde{\lambda}^{S'_K/2}d^F_x(\tilde{y}).\] 
Denote $x'=x^+_K$, $x'_t=f_t(x')=f_{t_K^++t}(x)$, and $\tilde{y}'_t=\cP_{x'_t}(f_{\theta(t_K^++t)}(y))$. If $K = 0$ we just let $x' = x$ and $\tilde y_t' = \cP_{x_t}(f_{\theta(t)}(y))$.
Then by multi-singular hyperbolicity \eqref{eq.muti-sing-hyp-prime} and \eqref{eq.F-expansion}, for each integer $n\ge T$ for which $x_n'\notin V$, we have 
\begin{equation*}
	d^F_{x'_{n}}(\tilde{y}'_{n})\ge e^{-n\eta/10}\|\psi_{n}|_{N^{cu}(x')}\|d^F_{x_K^-}(\tilde{y}^-_K)\ge e^{0.9n\eta}d^F_{x_K^-}(\tilde{y}^-_K).
\end{equation*}
There are infinitely many such $n$'s since $V$ is an isolating neighborhood of $\Sing(X)\cap\Lambda$, and therefore the forward orbit of $x$ and $y$ cannot stay in $V$ (unless either of them is in $W^s(\Sing(X)\cap\Lambda)$, but this is the next case). Along such a sequence of $n$, $d^F_{x'_{n}}(\tilde{y}'_{n})$ remains uniformly bounded, and we must have $d^F_x(\tilde{y})=0$.

We have proven in both cases that if $d^F_x(\tilde{y})\ge d^E_x(\tilde{y})$, then 
$d^F_x(\tilde{y})=0$. This forces $d^E_x(\tilde{y})=0$, and so $\tilde{y}=x$.

\medskip \noindent {\em Case 3.}
$x,y$ are both regular, and $x\in W^s(\Sing(X)\cap\Lambda)\Delta W^u(\Sing(X)\cap\Lambda$ where $A\Delta B$ denotes the symmetric difference of $A$ and $B$, i.e., $A\Delta B = (A\setminus B) \cup (B\setminus A).$ We will only consider the case $x\in W^s(\Sing(X)\cap\Lambda)$ but not in $W^u(\Sing(X)\cap\Lambda)$, and the other case follows by considering $-X$.

Now, suppose $x\in W^s(\sigma)\setminus\{\sigma\}$ for some singularity $\sigma$, then the condition \eqref{eq.delta-close} implies that $y\in W^s(\sigma)\setminus\{\sigma\}$. Considering forward iterates, we may assume that $x\in \partial W^-_r(\sigma)\cap W^s_{loc}(\sigma)$ and $\tilde{y}=\cP_x(y)\in W^s_{loc}(\sigma)\cap \cN_{\rho_0|X(x)|}(x)$. 
Then it follows from \eqref{eq.E-large-upon-entering} that $d^E_x(\tilde{y})\ge d^F_x(\tilde{y})$. We can thus consider the backward orbits of $x$ and $y$. Since $x$ is not contained in the unstable manifold of any singularity, the same proof as in Case 2 shows that $\tilde{y}=x$. 

\medskip \noindent {\em Case 4.}
$x,y$ are both regular, and $x\in W^s(\Sing(X)\cap\Lambda)\cap W^u(\Sing(X)\cap\Lambda$. Say $x\in W^s(\sigma)\cap W^u(\sigma')$ for some $\sigma,\sigma'\in\Sing(X\cap \Lambda)$.

In this case we may simplify take $x\in W^u_{loc}(\sigma')\cap \partial W^+_r(\sigma')$ and $\tilde{y}=\cP_{x}(y)\in \cN_{\rho_0|X(x)|}(x)$. As before, the shadowing property forces $\tilde y\in W^u_{loc}(\sigma')$ and consequently  $d_{x}^F(\tilde{y})\ge d_{x}^E(\tilde{y})$.
Also denote by $\tau$ the last time the orbit of $x$ enters $W_r$. 
The same proof as in Case 2 shows that 
$d_{f_\tau(x)}^F(\cP_{f_\tau(x)}(f_{\theta(\tau)}(y)))\gg d_{f_\tau(x)}^E(\cP_{f_\tau(x)}(f_{\theta(\tau)}(y)))$,
i.e., the $F$-length gets much larger when compared to the $E$-length upon entering $W_r(\sigma)$ for the last time. 

However, since $\tau$ is the last entrance of the forward orbit of $x$ to $W_r$, we must have $f_{\tau}(x)\in W^s_{loc}(\sigma)$, and the the shadowing between the orbits of $x$ and $y$ forces $f_{\theta(\tau)}(y)\in W^s_{loc}(\sigma)$. In particular, this requires $d_{f_\tau(x)}^F(\cP_{f_\tau(x)}(f_{\theta(\tau)}(y)))\le d_{f_\tau(x)}^E(\cP_{f_\tau(x)}(f_{\theta(\tau)}(y)))$,
which is a contradiction unless $d_{x}^F(\tilde{y})= d_{x}^E(\tilde{y})=0$, that is, $\tilde y = x$ 

\medskip 

\noindent {\em Wrapping up the proof.}
In all cases we have proven that $\cP_x(y) = \tilde y = x,$ and so $y = f_s(x)$ for some small $s$. Since $x$ is taken outside of an isolating neighborhood of $\Sing(X)\cap \Lambda$, the flow speed is bounded away from zero, and we must have $s\to 0$ as $\delta \to 0$. This shows that $X|_\Lambda$ is Komuro expansive, finishing the proof of Theorem \ref{m.e}.

\end{proof}

{
It is worth noting that in Case 2 we do not need $\theta$ to be surjective. Indeed, whenever the orbit of $x$ crosses $W_r$, the orbit of $y$ must then spend a uniform amount of time in $V$, i.e., $\Delta\tilde s_k > a>0$ \footnote{For instance, one can take $\displaystyle a = \frac{2(r_0-r)}{\sup_{x\in M}|X(x)|}$.} for some constant $a>0$. Then if the forward orbit of $x$ visits $V$ infinitely many times, then $\theta(t)$ must go to infinity as $t\to\infty$. On the other hand, if the forward orbit of $x$ only visit $W_r$ a finite number of times, then so must $y$, and we also have $\theta(t)\to\infty$ in this case. The same argument applies to $\lim_{t\to\infty}\theta(t)$ by considering the backward orbit. We hence establish the following, slightly stronger version of Komuro expansivity for points in $\Lambda^\dagger.$

\begin{theorem}\label{m.strongerK}
	Suppose $\Lambda$ is a multi-singular hyperbolic set of $(f_t)_{t\in\RR}$ and $\Lambda^\dagger$ is obtained by removing the stable and unstable manifolds of all singularities from $\Lambda$. Then, for all $\vep>0$ there exists $\delta>0$ such that for all $x\in\Lambda^\dagger$ and $y\in\Lambda$, and all continuous, strictly increasing function $\theta:\RR\to\RR$, we have
	$$
	d(f_t(x),f_{\theta(t)}(y))<\delta,\forall t\in\RR \implies f_{\theta(t_0)}(y)\in f_{[t_0-\vep,t_0+\vep]}(x) \mbox{ for some $t_0\in\RR$}.
	$$ 
\end{theorem}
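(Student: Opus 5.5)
The plan is to rerun Case 2 of the proof of Theorem \ref{m.e}, observing that surjectivity of $\theta$ was used only in Case 1 and in the reductions of Cases 3 and 4. All of those are excluded once $x\in\Lambda^\dagger$.

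\textbf{Reductions.} First, $y$ cannot be a singularity. Indeed, since $x\in\Lambda^\dagger$, its orbit leaves any small ball around a singularity in both time directions. If $y=\sigma$, then $f_{\theta(t)}(y)=\sigma$ for all $t$, so $d(f_t(x),\sigma)<\delta$ for all $t$; this contradicts $x\notin W^s(\sigma)$ once $\delta$ is below the isolating radius of $\Sing(X)\cap\Lambda$. So $y$ is regular. Second, since the forward orbit of $x$ is not in any stable manifold, it returns to $\Lambda\setminus V$ at arbitrarily large times. Replacing $x$ by $f_{t_0}(x)$ and $\theta$ by $\theta(\cdot+t_0)-\theta(t_0)$, we may assume $x\notin\Int(V)$ and $\theta(0)=0$. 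The conclusion then reads $y\in f_{[-\vep,\vep]}(x)$, and it follows from $\cP_x(y)=x$ because $|X|$ is bounded below off $V$. Finally, after possibly passing to $-X$, we may assume $d^F_x(\tilde y)\ge d^E_x(\tilde y)$.

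\textbf{Key step: $\theta(t)\to\pm\infty$.} This is the main point; the rest is Case 2 verbatim. I will show that the $\delta$-shadowing forces $\theta(t)\to\infty$ as $t\to\infty$, and symmetrically $\theta(t)\to-\infty$.
\begin{enumerate}
\item Suppose the forward orbit of $x$ crosses $W_r$ infinitely often. At each crossing, $f_t(x)$ passes within $r$ of $\sigma_k$ and lies outside $B_{r_0}(\sigma_k)$ at the times $t_k^{\pm}$. By $\delta$-shadowing, $f_{\theta(t)}(y)$ must travel from near $\partial B_{r_0}(\sigma_k)$ to within $r+\delta$ of $\sigma_k$ and back out. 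This costs $y$ a time of at least $a=2(r_0-r-\delta)/\sup|X|>0$. These $y$-segments are disjoint, so $\theta(t_k^+)\ge ka\to\infty$.
\item Suppose instead the orbit crosses $W_r$ only finitely often. Then $f_t(x)$ eventually stays in $U_\Lambda\setminus W_r$, where $|X|$ is bounded below. Here I plan to use the flow-box structure of the local Poincar\'e maps. Shadowing within $\delta\ll\rho_0|X|$ means that $f_{\theta(t)}(y)$ lies in the flow box of $f_t(x)$ for every $t$. Sliding $t$ by $1$ moves $f_t(x)$ across a definite flow-box length, so $\theta(t+1)-\theta(t)$ is bounded below by a positive constant for large $t$. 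Hence $\theta(t)\to\infty$.
\end{enumerate}
Because of this, every quantity used in Lemma \ref{lem.first-circle} is defined for all forward times: the points $\tilde y_k^\pm$, the crossing times $\Delta\tilde s_k\ge a$ (with $\tau_k$ large, as Remark \ref{r.switch} requires), and $\tilde y'_t$.

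\textbf{Conclusion.} I will then apply Lemma \ref{lem.first-circle} inductively along the forward orbit, as in Subcases 2.1 and 2.2. This uses Lemmas \ref{lem.E-large} and \ref{lem.F-large}, which already permit non-surjective $\theta$, and the expansion \eqref{eq.F-expansion} on excursions outside $V$. It gives $d^F\ge\tilde\lambda^{S'_k/2}d^F_x(\tilde y)$ along a sequence of times, while $d^F$ stays bounded by the shadowing. Hence $d^F_x(\tilde y)=0$, so $d^E_x(\tilde y)=0$ as well, and therefore $\tilde y=x$.

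I expect the main obstacle to be item (2) of the key step, the finite-crossing case. There I must ensure that $\theta$ cannot stall, that is, $y$ cannot linger near one point while $x$ moves away. The intended argument is that $x$ moves at speed bounded below and $y$ stays $\delta$-close to it. If the flow-box estimate is awkward, I will fall back on a simpler version: $d(f_t(x),f_{\theta(t)}(y))<\delta$ for $t\in[n,n+L]$ with $L$ fixed, together with the fact that the orbit segment $f_{[n,n+L]}(x)$ has diameter larger than $2\delta$. This forces $\theta(n+L)-\theta(n)\ge c>0$.
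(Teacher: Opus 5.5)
Your proposal is correct and follows essentially the same route as the paper. The paper likewise notes that surjectivity of $\theta$ is needed only to rule out $\theta$ stalling. It shows $\theta(t)\to\pm\infty$ in the same two cases: the uniform time cost $a$ of each passage through $W_r$ when there are infinitely many crossings, and the positive flow speed off $W_r$ when there are finitely many; it then reruns Case 2. Your flow-box (or diameter) argument for the finite-crossing case makes explicit a step the paper only asserts, and once $\theta\to\pm\infty$ is established, $\theta$ is a homeomorphism, so you could also simply invoke Theorem \ref{m.e}.
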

In certain sources this is known as K-expansivity and the Komuro expansivity in this paper is called $K^*$-expansivity. 

Theorem \ref{m.strongerK} cannot be further improved. Indeed, if $x$ belongs to the stable manifold of some $\sigma\in\Lambda$ then one can take some $y$, not in the orbit of $x$, together with a strictly increasing, continuous, yet bounded from above function $\theta:\RR\to\RR$ such that, as $t\to\infty$, $f_{\theta(t)}(y)$ goes to some $z\in B_\delta(x)$ as $f_t(z)\to\sigma$. Same can be said if $x$ belongs to the unstable manifold of some $\sigma\in\Lambda$. Indeed, Oka \cite{Oka} proved that $K$-expansivity at all points is equivalent to BW-expansivity, which forbids singularities to be approached by regular orbits. 

It is also worth noting, using the Poincar\'e recurrence theorem, that $\Lambda^\dagger$ has full mass for any invariant, ergodic probability measure on $\Lambda$ that is not a point mass of a singularity. In view of the almost expansivity defined in \cite{PYY21,PYY25a} We call this property the {\em almost $K$-expansivity}.
}

\section{Counting and equidistribution of periodic orbits: proof of Theorem \ref{m.counting} and \ref{m.robust.counting}}
\label{sec.counting-equidistribution}

Throughout this section, let $\Lambda$ be an isolated multi-singular
hyperbolic chain recurrence class and write
\[
h=h_{\mathrm{\mathrm{top}}}(X|_\Lambda)>0.
\]
We assume that $\Lambda$ satisfies the assumptions of Theorems \ref{m.counting}. In particular, this places $\Lambda$ under the assumptions of \cite[Theorem G]{PYY25a} and consequently there exists a unique, ergodic measure of maximal entropy $\mu = \mu_{MME}$. 
Furthermore, the ``good'' collection $\cG$ and the corresponding fixed-scale partition-sum estimates all hold (see \cite[Section 3 and 6.1]{PYY25a}; see also \cite{PYYY}).
We record explicitly the following estimate from \cite{PYY25a} and \cite{PYYY} for constant potential function $\varphi \equiv 0$. 

%
%We recall that orbit segments $(x,t)$ in $\cG$ satisfies the following:
%\begin{itemize}
%	\item $x$ is a forward hyperbolic time and recurrence Pliss time to 
%\end{itemize}

For $t>0$, set
\[
d_t(x,y)=\max_{0\le s\le t}d(f_s(x),f_s(y)).
\]
Given $Z\subset \Lambda$ and $\epsilon>0$, let
\[
\Lambda(Z,\epsilon,t)
=\sup\{\#E:E\subset Z\text{ is }(t,\epsilon)\text{-separated}\}.
\]
For a collection $\cC\subset \Lambda\times[0,\infty)$ of orbit segments,
we write $\cC_t=\{x:(x,t)\in\cC\}$ and
$\Lambda(\cC,\epsilon,t)=\Lambda(\cC_t,\epsilon,t)$.

\begin{proposition}	\label{prop.counting-inputs}\cite[Lemma 3.15 and Proposition 5.5]{PYYY}
	There is $\epsilon_G>0$ \footnote{For the choice of $\epsilon_G$, see \cite[Section 6.2]{PYY25a}; roughly speaking, it can be taken as $\frac{\vep}{100L_X}$ where $\vep$ is the scale of expansivity and $L_X$ is the Lipschitz constant for the time-one map $f_1$.} and constants $Q=Q(\epsilon_G)>1$,
	%$\tau=\tau(\epsilon_G)>0$, 
	and $n_0\ge1$ with the following properties for
	all integers $n\ge n_0$:
	\begin{enumerate}
		\item the global partition sum has the uniform upper bound
		\begin{equation}\label{eq.global-partition-upper}
			\Lambda(\Lambda,\epsilon_G,n)\le Qe^{hn};
		\end{equation}
		\item the following Gibbs upper bound holds: there exists $n_1>0$ and $Q'>0$ such that for all $n>n_1$ and any orbit segment $(x,n)$, it holds
		\begin{equation}\label{eq.gibbs-G}
			%(Q')^{-1}e^{-hn}\le 
			\mu(B_n(x,\epsilon_G)) \le Q'e^{-hn}.
		\end{equation}

%		\item for every $(x,n)\in\cG$, there is a periodic point $p=p(x,n)\in
%		\Lambda$ and a return time $q=q(x,n)>0$ such that
%		\begin{equation}\label{eq.periodic-closing}
%			f_q(p)=p,\qquad q\le n+\tau,
%			\qquad d_n(x,p)<\epsilon_G.
%		\end{equation}
	\end{enumerate}
	%The return time $q$ in \eqref{eq.periodic-closing} is not required to be theprime period of $p$, and no lower estimate of the form $q\ge n-o(1)$ is 	used below.
\end{proposition}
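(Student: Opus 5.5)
This proposition is imported from \cite[Lemma 3.15 and Proposition 5.5]{PYYY}, with the multi-singular hyperbolic framework of \cite{PYY25a}. So the plan is to check that the hypotheses of those results hold for $\Lambda$ with $\varphi\equiv 0$, and to fix $\epsilon_G$ below the expansivity scale. I would argue in three steps.

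\textbf{Step 1: setting up.} By \cite[Theorem A/G]{PYY25a}, $X|_\Lambda$ is almost expansive at some scale $\vep$. It is also entropy expansive at that scale, with a unique MME $\mu$. I take $\epsilon_G=\vep/(100L_X)$, so every $(n,\epsilon_G)$-Bowen ball lies well inside the scale where two orbits that shadow each other must coincide (up to a small time shift) $\mu$-a.e. Entropy expansiveness gives $h_{top}(X|_\Lambda)=\lim_n \frac1n\log\Lambda(\Lambda,\epsilon_G,n)$. I then recall the Climenhaga--Thompson decomposition $(\cP,\cG,\cS)$ built in \cite[Section 3]{PYY25a}. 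Here $\cG$ consists of orbit segments starting and ending at simultaneous hyperbolic and Pliss recurrence times away from $W_r$. $\cG$ has specification at scale $\epsilon_G$ (via Liao's shadowing lemma for quasi-hyperbolic strings and the homoclinic relation of the periodic orbits). Finally, the pressure of the obstructions $\cP\cup\cS$ is strictly below $h$.

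\textbf{Step 2: upper bound \eqref{eq.global-partition-upper}.} With the decomposition in hand, I follow Bowen's argument as adapted in \cite[Lemma 3.15]{PYYY}. Specification on $\cG$ gives supermultiplicativity of $\Lambda(\cG,\epsilon_G,n)$ up to a bounded gluing-time loss. Hence $\Lambda(\cG,\epsilon_G,n)\le Ce^{hn}$. To pass to all of $\Lambda$, I decompose each orbit segment as $(p,g,s)$ with $p\in\cP$, $g\in\cG$, $s\in\cS$. Summing over the lengths of prefix and suffix gives a geometric series in $e^{-(h-P(\cP\cup\cS))k}$, which converges because of the pressure gap. This yields $\Lambda(\Lambda,\epsilon_G,n)\le Qe^{hn}$.

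\textbf{Step 3: Gibbs upper bound \eqref{eq.gibbs-G}.} I would use the standard construction of $\mu$ as a limit of measures equidistributed on maximal $(n,\epsilon_G)$-separated sets. For an arbitrary segment $(x,n)$, consider a long orbit of length $N$ that passes within $\epsilon_G$ of $x$ for a stretch of length $n$. Its prefix of length $k$ and suffix of length $N-k-n$ can each be counted by $Qe^{hk}$ and $Qe^{h(N-k-n)}$, using Step~2 and the separation of Bowen balls at scale $2\epsilon_G$. Dividing by the lower bound $\Lambda(\Lambda,\epsilon_G,N)\ge e^{hN}$ and letting $N\to\infty$ gives $\mu(B_n(x,\epsilon_G))\le Q'e^{-hn}$.

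\textbf{Main obstacle.} The only delicate point is the Gibbs upper bound. It must hold for \emph{every} orbit segment, including those near singularities. This needs the partition-sum bound at scale $\epsilon_G$ to control Bowen balls at a comparable scale (via $2\epsilon_G$ vs.\ $\epsilon_G$). Here almost expansivity enters, since it implies the scale-independence of the pressure. This is exactly what \cite[Proposition 5.5]{PYYY} provides, so I would invoke it rather than reprove it.
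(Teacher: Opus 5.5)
Your proposal is correct and follows the paper's route. The paper gives no independent proof: it imports both estimates from \cite[Lemma 3.15, Proposition 5.5]{PYYY} with $\varphi\equiv 0$, after noting that the hypotheses of Theorem~\ref{m.counting} place $\Lambda$ in the setting of \cite[Theorem G]{PYY25a} (unique MME, the good collection $\cG$, and the fixed-scale estimates), and your Bowen/Climenhaga--Thompson sketches are the standard arguments behind those citations. One bookkeeping correction in Step 2: gluing $(n,\epsilon_G)$-separated $\cG$-segments requires specification at a scale $\delta<\epsilon_G/2$, not at $\epsilon_G$ itself (in \cite[Section 6.2]{PYY25a}, $\delta$ is chosen after $\vep$), and entropy expansiveness then absorbs the resulting loss of scale.
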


\subsection{The lower bound}
In this section we will establish the lower bound estimate of Theorem \ref{m.counting}, namely 	$\#\Pi_\Lambda(t)\ge C^{-1}\frac{e^{ht}}{t}$ for some constant $C>1$, for all $t$ sufficiently large. This is summarized as the following proposition.

\begin{proposition}\label{prop.cumulative-lower}
	Under the assumptions of Theorem \ref{m.counting}, there is a constant $c>0$ such that
	\[
	\#\Pi_\Lambda(t)\ge c\frac{e^{ht}}{t}
	\]
	for every sufficiently large $t$.
\end{proposition}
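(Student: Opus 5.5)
The plan is to produce, for each large integer $n$, about $e^{hn}$ orbit segments that are $(n,\epsilon_G)$-separated, close each of them to a periodic orbit of period roughly $n$, and then show that a single periodic orbit can absorb at most about $n$ of these segments. That loss of a factor $n$ is exactly where the $1/t$ comes from. The lower bound on the number of segments comes from the MME $\mu$ and the Gibbs upper bound \eqref{eq.gibbs-G}.

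\textbf{Step 1: many good segments.} Let $\cG$ be the good collection of \cite{PYY25a}, and assume $\mu(\cG_n)\ge 1/2$ for large $n$; in \cite{PYY25a} the good set is built from hyperbolic/Pliss times, which have positive frequency for $\mu$. Take a maximal $(n,2\epsilon_G)$-separated set $E_n\subset\cG_n$. The Bowen balls $B_n(x,2\epsilon_G)$, $x\in E_n$, cover $\cG_n$. Applying \eqref{eq.gibbs-G} at scale $2\epsilon_G$ (adjusting constants) gives $\#E_n\ge (2Q')^{-1}e^{hn}$.

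\textbf{Step 2: closing.} Each $(x,n)\in\cG$ is shadowed by a periodic point $p_x\in\Lambda$ with $f_{q_x}(p_x)=p_x$, $|q_x-n|\le\tau$, and $d_n(x,p_x)<\epsilon_G/2$. This uses the closing lemma of Liao theory at hyperbolic times for the scaled Poincar\'e flow, as in \cite{PYY25a,PYYY,WYZ}. The homoclinic relation of all periodic orbits in $\Lambda$ keeps $p_x$ inside $\Lambda$. By the separation of $E_n$, distinct $x$ give distinct points $p_x$.

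\textbf{Step 3: bounding multiplicity, the main obstacle.} I must show that one periodic orbit $\gamma$ with prime period $\ell(\gamma)\le n+\tau$ contains at most $Cn$ of the points $p_x$, $x\in E_n$. There is also an iterate issue: $q_x$ might be a multiple of $\ell(\gamma)$.

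The plan is to use Komuro expansivity (Theorem \ref{m.e}) in its periodic form. Suppose $p_x$ and $p_{x'}$ both lie on $\gamma$ and $p_{x'}=f_s(p_x)$ with $s\in[0,\ell(\gamma))$. The two points are $(n,2\epsilon_G)$-separated, so $s$ is not within $\epsilon/\sup|X|$ of $0$ modulo $\ell(\gamma)$. Otherwise the orbits would stay close for time $n$, since $\epsilon_G\ll\epsilon$ and Lipschitz control holds on bounded time. Hence the points of $\gamma$ of this form are $\epsilon'$-separated in phase, and there are at most $\ell(\gamma)/\epsilon'\le C n$ of them. The subtle part is that near singularities the phase speed is tiny, so phase separation does not follow from spatial separation alone. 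To handle this, I would choose the sample points $x$ (hence $p_x$) at hyperbolic times outside $V$, where $|X|$ is bounded below; the good collection starts outside $V$.

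For the iterate issue, if $\ell(\gamma)\le q_x/2$, then $p_x$ traces $\gamma$ at least twice within time $n$. Periodic orbits of period $\le n/2$ number at most $Qe^{hn/2}$ by \eqref{eq.global-partition-upper}, since periodic orbits are separated by the Komuro expansivity of Theorem \ref{m.e}. Together they absorb at most $Cn\,e^{hn/2}$ points, which is negligible.

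\textbf{Step 4: conclusion.} Combining the steps gives at least $c\,e^{hn}/n$ distinct periodic orbits with $n/2<\ell\le n+\tau$. Applying this at $n=\lfloor t-\tau\rfloor$ yields $\#\Pi_\Lambda(t)\ge c\,e^{ht}/t$.
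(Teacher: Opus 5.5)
\textbf{Steps 1 and 2 have a genuine gap.} Your overall plan matches the paper's: Gibbs upper bound, then closing, then at most $O(n)$ sample points per periodic orbit.

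First, the assumption $\mu(\cG_n)\ge 1/2$ for all large $n$ is not justified. Positive frequency of hyperbolic times along $\mu$-typical orbits controls $\mu(\cG_n)$ only on average in $n$. As soon as membership in $\cG_n$ constrains both endpoints of the segment, $\mu(\cG_n)$ behaves like $\mu(A\cap f_{-n}B)$. A lower bound for every large $n$ would then require mixing of $\mu$, and no non-arithmeticity hypothesis is made. A bound that holds only along $n$ with unbounded gaps does not give $\#\Pi_\Lambda(t)\ge c\,e^{ht}/t$ for every large $t$.

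Second, Step 2 closes an arbitrary $(x,n)\in\cG$. Liao's shadowing lemma closes a quasi-hyperbolic string only when its endpoint returns close to its starting point, and nothing forces $f_n(x)$ to be near $x$. Proposition \ref{prop.counting-inputs} supplies only the partition-sum and Gibbs upper bounds, not a closing or periodic-specification property for $\cG$.

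A smaller slip: you cover by Bowen balls of radius $2\epsilon_G$, but \eqref{eq.gibbs-G} is stated at scale $\epsilon_G$. The separation scale must lie below the Gibbs scale; the paper uses $4\epsilon<\epsilon_G$.

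The missing idea, which fixes both problems at once, is to work with returns to a fixed hyperbolic block. Take a compact $K\subset\Lambda\setminus\Sing(X)$ of small diameter with $\mu(K)>0$. Place it inside the intersection of the set of forward infinite hyperbolic times with a time shift of the set of backward infinite hyperbolic times. Then every return $x,f_T(x)\in K$ is a closable string. It yields $p$ with $f_q(p)=p$, $|q-T|\le\kappa$ and $d_T(x,p)<\epsilon$ (Lemma \ref{lem.recurrent-hyperbolic-closing-block}).

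To get mass for every $n$ without mixing, use that some time-$a$ map $F=f_a$ is ergodic. Choose $m$ with $\mu(\bigcup_{j\le m}F^{-j}K)>1-\mu(K)/2$. Pigeonhole then gives, for each $k$, some $j(k)\le m$ with $\mu(K\cap F^{-(k+j(k))}K)\ge\beta>0$. This produces $T_n\in[n,n+a(m+1)]$ with $\mu(K\cap f_{-T_n}K)\ge\beta$. Your covering argument applied to $K\cap f_{-T_n}K$ then yields at least $(\beta/Q')e^{hn}$ separated, closable segments.

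Step 3 can also be simplified. The bound $\#(P_n\cap\gamma)\le 1+\ell(\gamma)/b_0$ needs only uniform continuity, namely $d(f_r(z),z)<2\epsilon$ for $|r|<b_0$, which holds near singularities too. Iterates cause no trouble, because $\ell(\gamma_x)\le q_x\le n+\tau$ already places $\gamma_x$ in $\Pi_\Lambda(n+\tau)$. So neither sampling outside $V$ nor Komuro expansivity is needed for the lower bound.
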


The proof of Proposition \ref{prop.cumulative-lower} requires the following lemma.

%\begin{lemma}[Upper Gibbs estimate and the lower partition sum]
%	\label{lem.upper-Gibbs-K-partition-lower}
%	For any $n\in\NN$ and any compact set 
%	$K\subset\Lambda$ with $\mu(K)>0$, 
%	%	for every sufficiently large
%	%	integer $n$, and
%	%	\begin{equation}\label{eq.upper-Gibbs-on-K}
%		%		\mu\bigl(B_n(x,\epsilon_{ G})\bigr)
%		%		\le C_{ G}e^{-hn}
%		%		\quad\forall x\in K
%		%		\text{ and every sufficiently large }n.
%		%	\end{equation}
%	%	Then, for every sufficiently large $n$,
%	it holds that 
%	\begin{equation}\label{eq.K-partition-lower}
%		\Lambda(K,\eta,n)
%		\ge \frac{\mu(K)}{Q'}e^{hn}
%	\end{equation}
%	for all $\eta<\epsilon_G$.
%	
%	%	In particular,
%	%	\[
%	%	\Lambda(\cG,\epsilon,n)
%	%	\ge \Lambda(K,\epsilon,n)
%	%	\ge \frac{\mu(K)}{Q'}e^{hn}.
%	%	\]
%\end{lemma}
%\begin{proof}
%	Let $E_n\subset K$ be maximal $(n,\eta)$-separated.  Maximality
%	implies that the closed Bowen balls
%	$\overline B_n(x,\epsilon)$, $x\in E_n$, cover $K$.  
%	%	We thus obtain
%	%	\[
%	%	\overline B_n(x,4\epsilon)
%	%	\subset B_n(x,\epsilon_{ G})
%	%	\quad\text{for every }x\in E_n.
%	%	\]
%	Consequently, Proposition \ref{prop.counting-inputs} gives
%	\[
%	\mu(K)
%	\le \sum_{x\in E_n}
%	\mu\bigl(\overline B_n(x,\eta)\bigr)
%	\le Q'e^{-hn}\#E_n.
%	\]
%	Thus
%	\[
%	\#E_n\ge \frac{\mu(K)}{Q'}e^{hn},
%	\]
%	as desired. 
%	%Since $K\subset\cG_n$, the corresponding lower bound for $\Lambda(\cG,4\epsilon,n)$ follows.
%\end{proof}

\begin{lemma}
	\label{lem.recurrent-hyperbolic-closing-block}
	For any $\epsilon>0$ sufficiently small, there exist a compact set
	\[
	K=K(\epsilon)\subset\Lambda\setminus\operatorname{Sing}(X)
	\]
	with $ \mu(K)>0$, and constants $\kappa>0$ and $T_0>0$ with the following property.  Whenever
	$T\ge T_0$ and
	\[
	x,f_T(x)\in K,
	\]
	there exist a periodic point $p\in\Lambda$ and a return time $q>0$ such that
	\begin{equation*}\label{eq.recurrent-block-closing}
		f_q(p)=p,
		\qquad |q-T|\le\kappa,
		\qquad d_T(x,p)<\epsilon,
	\end{equation*}
	where $d_T$ is the Bowen $T$-metric.
\end{lemma}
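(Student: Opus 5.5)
We will build $K$ as a compact set of points that are uniformly hyperbolic for the scaled linear Poincar\'e flow and uniformly far from the singularities, and then close orbit segments with both endpoints in $K$ using Liao's shadowing lemma for the scaled linear Poincar\'e flow \cite{Li74, Gan}.

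\textbf{Step 1: hyperbolic times.} Since $h>0$ and $\mu$ is an ergodic measure of maximal entropy, $\mu$ is not supported on a singularity. By multi-singular hyperbolicity (the estimates \eqref{eq.muti-sing-hyp-prime} outside $V$, together with Lemmas \ref{lem.E-large} and \ref{lem.F-large} near Lorenz-like and reversed Lorenz-like singularities), the Lyapunov exponents of $\psi^*_t$ along $N^{cs}$ are negative and those along $N^{cu}$ are positive $\mu$-a.e. Fix a small compact isolating neighborhood $V'\subset V$ of $\Sing(X)\cap\Lambda$. A Pliss-lemma argument, as in \cite{PYY25a,PYYY}, then gives $\eta'>0$ and a compact set $K_0\subset\Lambda\setminus V'$ with $\mu(K_0)>0$ such that for every $x\in K_0$:
\begin{itemize}
\item $x$ is a backward $(\eta',N^{cu})$-hyperbolic time for $\psi^*$: $\prod_{j=0}^{n-1}\|\psi^*_{-1}|_{N^{cu}(f_{-j}x)}\|\le e^{-\eta' n}$ for all $n$;
\item $x$ is a forward $(\eta',N^{cs})$-hyperbolic time: $\prod_{j=0}^{n-1}\|\psi^*_{1}|_{N^{cs}(f_{j}x)}\|\le e^{-\eta' n}$ for all $n$.
\end{itemize}
We then take $K=K(\epsilon)\subset K_0$ compact, of positive measure and small diameter. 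By Poincar\'e recurrence, returns $f_T(x)\in K$ with $T$ large exist for $\mu$-a.e. $x\in K$.

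\textbf{Step 2: a quasi-hyperbolic string.} Let $x,f_T(x)\in K$. Because $x$ is a forward $N^{cs}$ hyperbolic time and $f_T(x)$ is a backward $N^{cu}$ hyperbolic time, the orbit segment $(x,T)$ is a $(\eta',T)$ quasi-hyperbolic string for $\psi^*$ in Liao's sense, with dominated splitting $N^{cs}\oplus N^{cu}$. Its endpoints lie in $K$ and are therefore $\operatorname{diam}K$-close, and the flow speed at both endpoints is bounded below by $\min_{\Lambda\setminus V'}|X|>0$. Liao's shadowing lemma for the scaled linear Poincar\'e flow \cite{Li74, Gan} then yields, for $\operatorname{diam}K$ small compared to $\epsilon$ and $T\ge T_0$, a periodic point $p$ and a reparametrization $\theta$ with $\theta(T)=q$, $f_q(p)=p$, and $d(f_t x,f_{\theta(t)}p)<\epsilon|X(f_tx)|$.

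\textbf{Step 3: controlling $q$ and the Bowen distance.} This is the main obstacle. Liao's lemma only gives $|\theta'(t)-1|$ small \emph{relative to the flow speed}. To get $|q-T|\le\kappa$ and $d_T(x,p)<\epsilon$ in the genuine Bowen metric, we need a uniform bound on the accumulated time drift. The plan is to use the rescaled closeness near singularities: each passage near $\sigma$ by an orbit shadowing at scale $\epsilon|X|$ produces a drift that is exponentially small in the distance to the entry and exit points, summable along the string by the hyperbolic-time estimates. Away from $V'$ the drift is controlled linearly by $\epsilon$, but it accumulates over time. To handle this, we would redo the shadowing with the time reparametrization corrected, using sectional Poincar\'e maps $\cP_{1,x}$ on normal manifolds, so that $\theta(t)-t$ stays bounded by $\kappa$. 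Komuro expansivity (Theorem \ref{m.e}) ensures the resulting $p$ is unique and keeps the phases aligned, and the Bowen closeness $d_T(x,p)<\epsilon$ follows once $|\theta(t)-t|$ is small.

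Finally, $p\in\Lambda$: since all periodic orbits are homoclinically related and $\Lambda$ is a chain recurrence class, the closing periodic orbit, which is $\epsilon$-close to a segment in $\Lambda$, lies in $\Lambda$ for $\epsilon$ small by isolation.
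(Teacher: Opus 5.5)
Your outline follows the same route as the paper: positive-measure sets of hyperbolic times away from $\Sing(X)$, Liao--Gan closing of a quasi-hyperbolic string, then removal of the time change. However, Step~3, which you correctly identify as the crux, is not carried out, and the mechanism you sketch would not work.

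Komuro expansivity (Theorem~\ref{m.e}) concerns two \emph{bi-infinite} orbits that shadow each other after an arbitrary reparametrization. It cannot be applied to a finite segment $(x,T)$ against a periodic orbit. Being blind to reparametrizations by design, it also says nothing about the drift $\theta(t)-t$. ``Redoing the shadowing with the time change corrected'' is not an argument either. Your premise that away from $V'$ the drift grows linearly in $T$ is precisely what has to be refuted.

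The missing idea is the time-control argument of \cite[Proposition~7.5]{PYY25a}, which the paper invokes. The displacement is not uniformly of size $\rho$ along the string. Since $f_{\theta(t)}(p)\in\cN(f_t(x))$, and the string is contracted along $N^{cs}$ forward from $x$ and along $N^{cu}$ backward from $f_T(x)$, the scaled normal displacement at time $t$ is at most a constant times $\rho(\lambda^{t}+\lambda^{T-t})$ for some $\lambda<1$. The flight-time error over each unit step is bounded by a constant times this scaled displacement. Summing the two geometric tails gives $\sup_{0\le s\le T}|\theta(s)-s|\le C\rho$ with $C$ independent of $x$ and $T$. This holds uniformly along the whole string, with no separate analysis near the singularities. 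It yields $|q-T|\le\kappa$. Because the bound is \emph{small} and not merely bounded, it also gives $d_T(x,p)<\epsilon$ via uniform continuity of the flow. A bound $|\theta(t)-t|\le\kappa$ alone, which is what your plan aims for, would not give the Bowen-metric estimate.

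There is a second, smaller gap in Step~1. A Pliss-type argument gives positive measure separately to the set $\Lambda^E$ of infinite forward $N^{cs}$-hyperbolic times and to the set $\Lambda^F$ of infinite backward $N^{cu}$-hyperbolic times. Nothing forces $\mu(\Lambda^E\cap\Lambda^F)>0$. The two conditions involve different bundles and opposite time directions, and both sets can have small measure, so the good phases for each can be disjoint. The paper instead uses ergodicity to find $\tau_*\ge 0$ with $\mu\big(f_{\tau_*}(\Lambda^F)\cap\Lambda^E\big)>0$ and takes $K$ inside this intersection. Then $(x,T-\tau_*)$ is the genuinely quasi-hyperbolic string, and the fixed offset $\tau_*$ is absorbed into the constants. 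Your Step~2 needs this modification.

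Two minor remarks. The signs of the exponents come from hyperbolicity of $\mu$ \cite{CLYZ}, not from Lemmas~\ref{lem.E-large} and~\ref{lem.F-large}, which compare $E$- and $F$-lengths along shadowing orbits. Your isolation argument for $p\in\Lambda$ is fine, since the whole periodic orbit stays close to $\Lambda$; the homoclinic-relation hypothesis plays no role there.
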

We remark that $q$ is not required to be the prime period of $p$.

\begin{proof}
	The unique MME $\mu$ is ergodic and hyperbolic (see \cite{CLYZ}). With $W=W_r$ as in \cite[Section 5]{PYY25a}, the construction in
	\cite[Section~8.1]{PYY25a} gives compact sets
	\[
	\Lambda_W^E(\lambda_0),\qquad \Lambda_W^F(\lambda_0)
	\subset \Lambda\setminus W_r
	\]
	of forward and backward infinite hyperbolic times, respectively, both having
	positive $\mu$-measure.  By ergodicity, there is $\tau_*\ge0$ such that
	\[
	H:=f_{\tau_*}\bigl(\Lambda_W^F(\lambda_0)\bigr)
	\cap \Lambda_W^E(\lambda_0)
	\]
	has positive measure; this is the reduction used in the proof of
	\cite[Lemma~8.2]{PYY25a}.  After intersecting $H$
	with a sufficiently small ball and then taking a compact subset, we obtain a
	compact set $K\subset H$ such that
	$
	\mu(K)>0
	$ and $\operatorname{diam}K
	$
	is as small as needed. Also note that $H$ is away from $\Sing(X)\cap\Lambda$.
	
	Following the proof of \cite[Lemma~8.2]{PYY25a}, a single fixed constant
	acceleration of the vector field makes the points of $H$ uniformly admissible
	both as initial forward-hyperbolic endpoints and as terminal
	backward-hyperbolic endpoints in Liao's shadowing lemma.  Translating back to
	the original parametrization changes only the uniform constants.  Therefore,
	a return segment $(x,T)$ with $x,f_T(x)\in K$ and $\operatorname{diam}K$ below the Liao
	closing $\tilde\delta$ scale is a uniformly quasi-hyperbolic finite string with a small
	closing jump.
	
	We apply the finite Liao--Gan shadowing lemma to this string; see
	\cite{Gan} and compare \cite[Lemma 4.5]{PYY21} and \cite[Appendix~B, Lemma~B.1]{PYY25a}.  It produces a hyperbolic
	periodic point $p$ and an increasing time change $\theta$ with
	$\theta(T)=q$ such that the orbit of $p$ (which has period $q$) scaled-shadows $(x,T)$ and
	$|q-T|$ is uniformly bounded.\footnote{\cite{Gan} focuses on a discrete sequence of maps which corresponds to the flow-holonomies $\cP_{1,x_k}$ and therefore does not explicitly contain the estimate for the uniformly bounded time change. See an outline of this fact in \cite[Appendix B]{PYY25a}. } In this setting, $f_{\theta(t)}(p)$ is indeed on the normal plane of $f_t(x)$ for all $t$.
	
	We finally remove the time change.  The time-control argument in the proof
	of \cite[Proposition~7.5]{PYY25a} bounds each flight-time error by the sum
	of a forward and a backward geometric tail.  Summing the same estimate up
	to an arbitrary intermediate time gives
	\[
	\sup_{0\le s\le T}|\theta(s)-s|\le C\rho,
	\]
	where $\rho$ is the scaled-shadowing accuracy and $C$ is independent of
	$x$ and $T$.  Together with the uniform continuity of the flow and the fact
	that $K$ stays a positive distance from the singularities, this implies
	\[
	d_T(x,p)<\epsilon
	\]
	once the closing scale is chosen sufficiently small.  The same estimates give
	$|q-T|\le\kappa$ with $\kappa$ independent of $x$ and $T$. Furthermore, since $\Lambda$ is a chain recurrence class, the periodic orbit is contained in $\Lambda$ due to \cite[Lemma B.1 (f)]{PYY25a}. This completes the proof of the lemma. 
\end{proof}

\begin{proof}[Proof of Proposition \ref{prop.cumulative-lower}]
	Fix $\epsilon>0$ small that Lemma
	\ref{lem.recurrent-hyperbolic-closing-block} applies and, in addition,
	\begin{equation*}\label{eq.Gibbs-scale-choice}
		4\epsilon<\epsilon_{ G},
	\end{equation*}
	where $\epsilon_{ G}>0$ is given by \ref{prop.counting-inputs}. 
	Let $K$ be the compact set given by Lemma
	\ref{lem.recurrent-hyperbolic-closing-block} and put
	\[
	\alpha:=\mu(K)>0.
	\]
	The next step is to show that for each $n\in\NN$ there exists some $T_n$ with $|T_n-n|$ bounded over $n$, and satisfy $\mu(K\cap f_{-T_n}(K)) > 0$.
	
	Since $\mu$ is ergodic for the flow $(f_t)_{t\in\RR}$, there exists $a>0$ for which $F:=f_{a}$ is ergodic w.r.t.\ $\mu$ (see \cite{PS71} and \cite[Theorem 3.3.13]{FH19}).  Since $\mu(K)>0$,  there is $m\ge0$ such that the set
	\[
	U_m:=\bigcup_{j=0}^m F^{-j}K
	\]
	satisfies $\mu(U_m)>1-\frac{\alpha}{2}$.
	For every integer $k\ge0$,
	\begin{align*}
		\sum_{j=0}^m
		\mu\bigl(K\cap F^{-(k+j)}K\bigr)
		\ge \mu\bigl(K\cap F^{-k}U_m\bigr) >\frac{\alpha}{2}.
	\end{align*}
	By the pigeonhole principle, for every $k\ge0$ there is $j(k)\in\{0,\ldots,m\}$ such
	that
	\begin{equation*}\label{eq.uniform-return-mass}
		\mu\bigl(K\cap F^{-(k+j(k))}K\bigr)
		\ge \frac{\alpha}{2(m+1)}:=\beta>0.
	\end{equation*}
	
	Now, given a sufficiently large integer $n$, let
	\[
	k_n:=\left\lceil\frac{n}{a}\right\rceil,
	\qquad
	T_n:=a\bigl(k_n+j(k_n)\bigr).
	\]
	Then
	\begin{equation}\label{eq.return-time-window}
		n\le T_n\le n+a(m+1),
	\end{equation}
	and the compact return set
	\[
	K_n:=K\cap f_{-T_n}K
	\]
	satisfies
	\begin{equation*}
		\mu(K_n)\ge\beta.
	\end{equation*}
%	and then Lemma \ref{lem.upper-Gibbs-K-partition-lower} gives 
%	\begin{equation*}
%		\Lambda(K_n,\epsilon,n)\ge \frac{\beta}{Q'} e^{hn}.
%	\end{equation*}
	
	%	
	%	\medskip
	%	\noindent\emph{A large recurrent separated set.}
	
	%	Maximality gives
	%	\[
	%	K_n\subset
	%	\bigcup_{x\in E_n}\overline{B}_{T_n}(x,4\epsilon).
	%	\]
	%	By \eqref{eq.Gibbs-scale-choice} and \eqref{eq.upper-Gibbs-MME},
	%	\begin{align*}
		%		\beta
		%		&\le \mu(K_n)\\
		%		&\le \sum_{x\in E_n}
		%		\mu\bigl(\overline{B}_{T_n}(x,4\epsilon)\bigr)\\
		%		&\le \sum_{x\in E_n}
		%		\mu\bigl(B_{T_n}(x,\epsilon_{ G})\bigr)\\
		%		&\le Q_{ G}e^{-hT_n}\#E_n.
		%	\end{align*}
	%	Thus
	%	\begin{equation}\label{eq.recurrent-separated-cardinality}
		%		\#E_n\ge c_0e^{hT_n},
		%		\qquad c_0:=\frac{\beta}{Q_{ G}}>0.
		%	\end{equation}
	
	Let $E_n\subset K_n$ be a maximal $(T_n,4\epsilon)$-separated set, and so 
	\begin{align*}
		\beta\le \mu(K_n)\le \sum_{x\in E_n}\mu(\overline B_{T_n}(x,4\epsilon))\le \sum_{x\in E_n}\mu(B_{n}(x,\epsilon_G))\le Q'e^{-hn}\# E_n
	\end{align*}
	where the last inequality follows from Proposition \ref{prop.counting-inputs}. This gives 
	$$
	\# E_n\ge \frac{\beta}{Q'}e^{hn}.
	$$
	
	For every $x\in E_n$, we have $x,f_{T_n}(x)\in K$.  Lemma
	\ref{lem.recurrent-hyperbolic-closing-block} therefore gives a periodic
	point $p_x\in\Lambda$ and a return time $q_x>0$ such that
	\begin{equation}\label{eq.recurrent-periodic-closing}
		f_{q_x}(p_x)=p_x,
		\qquad |q_x-T_n|\le\kappa,
		\qquad d_{T_n}(x,p_x)<\epsilon.
	\end{equation}
	Define the set 
	\[
	P_n:=\{p_x:x\in E_n\}.
	\]
	If $x,y\in E_n$ are distinct, then
	$d_{T_n}(x,y)\ge4\epsilon$, and hence
	\[
	d_{T_n}(p_x,p_y)
	\ge d_{T_n}(x,y)-d_{T_n}(x,p_x)-d_{T_n}(y,p_y)
	>2\epsilon.
	\]
	Thus $P_n$ is $(T_n,2\epsilon)$-separated.  In particular, the map
	$x\mapsto p_x$ is injective and
	\begin{equation}\label{eq.recurrent-periodic-points-cardinality}
		\#P_n=\#E_n\ge \frac{\beta}{Q'}e^{hn}.
	\end{equation}
	
%	\medskip
%	\noindent\emph{From periodic points to primitive periodic orbits.}
	Next we deal with the possibility that multiple $p_x$ may lie on the same orbit. The argument is standard. By uniform continuity and the compactness of $\Lambda$, there is
	$b_0>0$ such that
	\begin{equation*}\label{eq.recurrent-small-phase}
		d(f_r(z),z)<2\epsilon
		\qquad
		\forall z\in\Lambda\text{ and }|r|<b_0.
	\end{equation*}
	Suppose that $p,p'\in P_n$ lie on the same primitive periodic orbit
	$\gamma$ within time $r$ of each other.  If $|r|<b_0$, then for every
	$s\in[0,T_n]$,
	\[
	d(f_s(p),f_s(p'))
	=d\bigl(f_s(p),f_r(f_s(p))\bigr)<2\epsilon,
	\]
	contradicting the $(T_n,2\epsilon)$-separation of $P_n$.  Hence $|r|\ge  b_0$ for all pairs of points $p,p'\in P_n$, and therefore for every periodic orbit $\gamma,$ it must hold that 
	\begin{equation}\label{eq.recurrent-phase-multiplicity}
		\#(P_n\cap\gamma)
		\le 1+\frac{\ell(\gamma)}{b_0}
	\end{equation}
	where $\ell(\gamma)$ is the prime period of $\gamma$ (not to be confused with its length).
	
	Let $\gamma_x$ be the primitive periodic orbit containing $p_x$.  Since
	$q_x$ is a period of $p_x$, by  \eqref{eq.recurrent-periodic-closing} and \eqref{eq.return-time-window} we have
	\[
	\ell(\gamma_x)\le q_x
	\le T_n+\kappa
	\le n+a(m+1)+\kappa.
	\]
	To simplify notation, set
	\[
	\tau:=a(m+1)+\kappa.
	\]
	which does not depend on $n.$
	It follows from \eqref{eq.recurrent-phase-multiplicity} that, after
	increasing a constant $C_0>0$ if necessary,
	\[
	\#(P_n\cap\gamma)\le C_0n
	\]
	for every primitive orbit $\gamma$ meeting $P_n$ and every sufficiently
	large $n$.  Combining this with \eqref{eq.recurrent-periodic-points-cardinality}, we obtain
	\begin{equation}\label{eq.recurrent-integer-lower}
		\#\Pi_\Lambda(n+\tau)
		\ge \frac{\#P_n}{C_0n}
		\ge \frac{\beta}{Q'C_0}\frac{e^{hn}}{n}.
	\end{equation}
	%Notice that no lower estimate for $q_x$ is used, and $q_x$ need not be the prime period of $p_x$.
	
	Finally, for sufficiently large real $t$, let $n:=\lfloor t-\tau\rfloor.$ By monotonicity of $\#\Pi_\Lambda$ w.r.t.\ $t$ and
	\eqref{eq.recurrent-integer-lower}, we have
	\[
	\#\Pi_\Lambda(t)
	\ge \#\Pi_\Lambda(n+\tau)
	\ge \frac{\beta}{Q'C_0}\frac{e^{hn}}{n}
	\ge \frac{\beta e^{-h(\tau+1)}}{Q'C_0}\frac{e^{ht}}{t}.
	\]
	This proves the proposition.
\end{proof}

\subsection{The upper bound}\label{ss.6.2}
The goal of this section is to establish the upper bound of $\#\Pi_\Lambda(t)$ of the form $const\cdot{e^{ht}}/{t}$. See Propositions \ref{prop.narrow-window-upper} and \ref{prop.cumulative-upper} below.

Note that the previous lower bound estimate does not require Komuro expansivity. 
The next lemma is the point at which it is used. It is motivated by \cite{Kni99}: instead of
choosing one representative on each periodic orbit $\gamma$, we choose a number of
points proportional to its period $\ell(\gamma)$. 

\begin{lemma}\label{lem.phase-separated}
	There exist constants $b>0$, $\delta_{\mathrm{per}}>0$, and
	$\epsilon_{\mathrm{per}}>0$ such that the following holds. For every
	sufficiently large $t$ and every periodic orbit
	$\gamma\in\Pi_\Lambda(t,\delta_{\mathrm{per}})$
	choose any $x_\gamma\in\gamma$ and put
	\[
	m_\gamma=\left\lfloor\frac{\ell(\gamma)}b\right\rfloor,
	\qquad
	E_\gamma=\{f_{jb}(x_\gamma):0\le j<m_\gamma\}.
	\]
	Then the set
	\[
	E_t:=\bigcup_{\gamma\in \Pi_\Lambda(t,\delta_{\mathrm{per}})}E_\gamma
	\]
	is $(t,\epsilon_{\mathrm{per}})$-separated.
\end{lemma}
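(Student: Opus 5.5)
The plan is to argue by contradiction, deriving both kinds of separation from Komuro expansivity (Theorem \ref{m.e}). Fix $\vep_K>0$ small and let $\delta_K>0$ be the constant Theorem \ref{m.e} associates to it for $X|_\Lambda$. By uniform continuity of the flow on $M$, choose $\delta_{\mathrm{per}}>0$ so that
\[
d(f_s(w),f_{s'}(w))<\delta_K/2 \quad\text{whenever } |s-s'|\le\delta_{\mathrm{per}}.
\]
Then set $\epsilon_{\mathrm{per}}=\delta_K/2$ and $b>2\vep_K$, with $b$ also larger than the minimal period of the finitely many short orbits if needed. Suppose two distinct points $z,z'\in E_t$ satisfy $d_t(z,z')<\epsilon_{\mathrm{per}}$. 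There are two cases.

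\emph{Different orbits.} Let $z\in\gamma$ and $z'\in\gamma'$ with $\gamma\neq\gamma'$, and write $\ell=\ell(\gamma)$, $\ell'=\ell(\gamma')$. Both lie in $(t-\delta_{\mathrm{per}},t]$, so $|\ell-\ell'|<\delta_{\mathrm{per}}$. Define $\theta$ by $\theta(k\ell+u)=k\ell'+u\ell'/\ell$ for $k\in\mathbb Z$ and $u\in[0,\ell)$. This is an orientation-preserving homeomorphism of $\RR$ with $\theta(0)=0$, and it is surjective. For $s=k\ell+u$, periodicity gives
\[
d(f_s z,f_{\theta(s)}z')\le d(f_u z,f_u z')+d(f_u z',f_{u\ell'/\ell}z').
\]
The first term is less than $\epsilon_{\mathrm{per}}$ because $u<\ell\le t$. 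In the second term the time discrepancy is $u|\ell'-\ell|/\ell<\delta_{\mathrm{per}}$, so it is less than $\delta_K/2$. Hence $d(f_s z,f_{\theta(s)}z')<\delta_K$ for all $s\in\RR$. Komuro expansivity then gives $f_{\theta(t_0)}(z')\in f_{[t_0-\vep_K,t_0+\vep_K]}(z)$ for some $t_0$. So $\gamma'=\gamma$, a contradiction. It is essential here that $\theta$ is rescaled on each period rather than taken to be the identity; otherwise the time drift $k|\ell-\ell'|$ would grow without bound.

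\emph{Same orbit.} Let $z=f_{jb}(x_\gamma)$ and $z'=f_{j'b}(x_\gamma)=f_r(z)$, where $r=(j'-j)b$ and $0<|r|\le \ell-b$. Since $d_t(z,f_rz)<\epsilon_{\mathrm{per}}$ with $t\ge\ell$, periodicity gives $d(f_sz,f_s(f_rz))<\delta_K$ for all $s\in\RR$. We apply Theorem \ref{m.e} with $\theta=\mathrm{id}$ to the pair $(z,f_rz)$. This yields $t_0$ with $f_{t_0+r}(z)=f_{t_0+u}(z)$ for some $|u|\le\vep_K$. Hence $r-u\in\ell\mathbb Z$, so $r$ lies within $\vep_K$ of a multiple of $\ell$. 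But $b\le|r|\le\ell-b$ and $b>\vep_K$, which is a contradiction.

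\emph{Where the care goes.} The main point needing attention is the formulation of Komuro expansivity as proved. The conclusion is stated in terms of a flow-time window $f_{[t_0-\vep,t_0+\vep]}$, and the remark before Theorem \ref{m.strongerK} notes it holds at some $t_0$ rather than $t_0=0$. For the same-orbit case we only need the weaker consequence that $r$ is within $\vep_K$ of $\ell\mathbb Z$; this follows from injectivity of $s\mapsto f_s(z)$ modulo $\ell$, with no need for a speed lower bound near singularities. We must also ensure $t\ge\ell$, which holds since $\gamma\in\Pi_\Lambda(t,\delta_{\mathrm{per}})$. Finally, $\lfloor\ell/b\rfloor$ points spaced by $b$ fit within one period, so $|r|\le\ell-b$ as required.
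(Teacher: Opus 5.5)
Your proof is correct and is essentially the paper's argument: the paper uses the same period-rescaled reparametrization $\theta(kp+s)=kq+\frac{q}{p}s$ in a single pass. Once Komuro expansivity forces $\gamma=\gamma'$, that $\theta$ becomes the identity and the time-window conclusion contradicts the phase gap $b$, which is exactly your two cases merged; your extra condition on $b$ involving short orbits is unnecessary, since every orbit in $\Pi_\Lambda(t,\delta_{\mathrm{per}})$ has period close to $t$.
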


\begin{proof}
	By Theorem \ref{m.e}, $X|_\Lambda$ is Komuro expansive. For any $b_1>0$, let $\vep_K>0$ be given by the definition of Komuro expansivity.  
	For any $b>4b_1$, by uniform continuity we can choose $\delta_{\mathrm{per}}>0$ and
	$\epsilon_{\mathrm{per}}>0$ such that
	\begin{equation}\label{eq.Komuro-scales}
		\epsilon_{\mathrm{per}}+
		\sup_{\substack{z\in\Lambda\\|s|\le\delta_{\mathrm{per}}}}
		d(f_s(z),z)<\vep_K.
	\end{equation}

	Towards a contradiction, suppose that two distinct points $x\in E_\gamma$ and $y\in E_{\gamma'}$ are not
	$(t,\epsilon_{\mathrm{per}})$-separated (we do not assume $\gamma\ne\gamma'$). Then
	\[
	d(f_s(x),f_s(y))<\epsilon_{\mathrm{per}}
	\quad\text{for every }s\in[0,t].
	\]
	Write
	$p=\ell(\gamma)$ and $q=\ell(\gamma')$ and note that they both belong to $(t-\delta_{\mathrm{per}},t]$ by assumption. 
	We define a time reparametrization $\theta:\mathbb R\to\mathbb
	R$ by
	\[
	\theta(kp+s)=kq+\frac qp s,
	\qquad k\in\mathbb Z,\quad 0\le s<p.
	\]
	For $r=kp+s$, periodicity and \eqref{eq.Komuro-scales} give
	\begin{align*}
		d(f_r(x),f_{\theta(r)}(y))
		&=d\left(f_s(x),f_{(q/p)s}(y)\right)\\
		&\le d(f_s(x),f_s(y))
		+d\left(f_s(y),f_{(q/p)s}(y)\right)\\
		&<\vep_K,
	\end{align*}
	where we used $s\le p\le t$ and
	$|(q/p)s-s|\le|q-p|\le\delta_{\mathrm{per}}$. %Hence the two full orbits 	$\vep_K$-shadow one another after the time change $\theta$. 
	Komuro
	expansivity then implies that $x$ and $y$ lie on the same orbit, that is,
	$\gamma=\gamma'$ and $p=q$. In this case $\theta$ is the
	identity map. Furthermore, there exists $t_0\in\RR$ for which 
	$$
	f_{\theta(t_0)}(y)\in f_{[t_0-b_1,t_0+b_1]}(x),
	$$
	Since $\theta$ is the identity, the (circular) time difference between $x,y$ is at most $a$.  
	This contradicts the assumption that the  time difference between $x$ and $y$ is at least $b > 4a$. Therefore $x=y$, proving that $E_t$ is $(t,\epsilon_{\mathrm{per}})$-separated.
\end{proof}

\begin{remark}\label{r.6.5}
	Note that $b_1>0$ does not need to be small. One can also take $\epsilon_{\mathrm{per}} = \vep_K/2$ (or even arbitrarily close to $\vep_K$) then choose $\delta_{\mathrm{per}}$ accordingly. 
\end{remark}

Next we state the precise upper bound estimate for $\#\Pi_\Lambda(t,\delta_{\mathrm{per}})$
\begin{proposition}\label{prop.narrow-window-upper}
	Under the assumptions of Theorem \ref{m.counting}, there is $C_1>0$ such that, for all sufficiently large $t$,
	\begin{equation}\label{eq.narrow-window-upper}
		\#\Pi_\Lambda(t,\delta_{\mathrm{per}})
		\le C_1\frac{e^{ht}}t.
	\end{equation}
\end{proposition}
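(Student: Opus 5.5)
The plan is to combine Lemma \ref{lem.phase-separated} with the global partition-sum bound \eqref{eq.global-partition-upper} of Proposition \ref{prop.counting-inputs}. Fix $b,\delta_{\mathrm{per}},\epsilon_{\mathrm{per}}$ as in Lemma \ref{lem.phase-separated}. By Remark \ref{r.6.5} we may take $\epsilon_{\mathrm{per}}$ comparable to the Komuro scale $\vep_K$, and in particular arrange $\epsilon_{\mathrm{per}}\ge\epsilon_G$. If that is not possible, we instead use the standard comparison between separated sets at two scales: a $(t,\epsilon_{\mathrm{per}})$-separated set has cardinality at most $\Lambda(\Lambda,\epsilon_{\mathrm{per}},t)$, and this is bounded by $Q(\epsilon_{\mathrm{per}})e^{ht}$. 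The latter bound holds because Proposition \ref{prop.counting-inputs} applies at any scale below $\epsilon_G$; we may simply shrink $\epsilon_G$ so that $\epsilon_G\le \epsilon_{\mathrm{per}}$, since separation at a larger scale implies separation at a smaller one. Since $\epsilon_{\mathrm{per}}$ is fixed and the partition sum is monotone in the scale, either way we obtain a constant $Q_1$ such that every $(n,\epsilon_{\mathrm{per}})$-separated subset of $\Lambda$ has at most $Q_1e^{hn}$ elements for all large integers $n$.

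The counting then goes as follows. For each $\gamma\in\Pi_\Lambda(t,\delta_{\mathrm{per}})$ we have $\ell(\gamma)>t-\delta_{\mathrm{per}}$, so $m_\gamma=\lfloor \ell(\gamma)/b\rfloor\ge (t-\delta_{\mathrm{per}})/b-1\ge t/(2b)$ for large $t$. Lemma \ref{lem.phase-separated} says $E_t$ is $(t,\epsilon_{\mathrm{per}})$-separated, and the sets $E_\gamma$ are disjoint. The lemma also shows that the points within one $E_\gamma$ are distinct: consecutive phases differ by $b$, and distinct points are separated. Hence
\[
\frac{t}{2b}\,\#\Pi_\Lambda(t,\delta_{\mathrm{per}})\le \sum_\gamma m_\gamma=\#E_t\le \Lambda(\Lambda,\epsilon_{\mathrm{per}},t).
\]
To pass from real $t$ to integers, note that a $(t,\epsilon)$-separated set is $(\lceil t\rceil,\epsilon)$-separated. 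This gives $\#E_t\le Q_1e^{h\lceil t\rceil}\le Q_1e^{h}e^{ht}$. Therefore $\#\Pi_\Lambda(t,\delta_{\mathrm{per}})\le 2bQ_1e^h\, e^{ht}/t$, which proves \eqref{eq.narrow-window-upper} with $C_1=2bQ_1e^h$.

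The main obstacle is the scale compatibility in the first step. The expansivity scale $\vep_K$, and hence $\epsilon_{\mathrm{per}}$, is dictated by Theorem \ref{m.e}. The uniform upper bound \eqref{eq.global-partition-upper}, however, is only quoted at the specific scale $\epsilon_G$, which is chosen as a small fraction of the expansivity scale. I would first use the footnote's recipe $\epsilon_G=\vep/(100L_X)$ to note that $\epsilon_G<\epsilon_{\mathrm{per}}$, so that an $\epsilon_{\mathrm{per}}$-separated set is automatically $\epsilon_G$-separated. This gives $\#E_t\le\Lambda(\Lambda,\epsilon_G,\lceil t\rceil)\le Qe^{h\lceil t\rceil}$ directly, which settles the issue in the favorable direction and makes the argument above complete.
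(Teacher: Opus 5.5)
Your proposal is correct and follows the paper's proof: arrange $\epsilon_G<\epsilon_{\mathrm{per}}$, bound $\sum_\gamma m_\gamma=\#E_t$ by $\Lambda(\Lambda,\epsilon_G,t)\le Qe^{ht}$ via Lemma~\ref{lem.phase-separated}, and divide by $m_\gamma\ge t/(2b)$. The paper settles the scale question you flag the same way, taking $\epsilon_{\mathrm{per}}=\vep_K/2$ (Remark~\ref{r.6.5}) and $\epsilon_G$ arbitrarily small; your passage from $t$ to $\lceil t\rceil$ is a concrete version of its remark that $Q$ can be enlarged to cover real times.
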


\begin{proof} 
	We note that the scale $\vep_G$ in Proposition \ref{prop.counting-inputs} can be chosen arbitrarily small, as we previously explained in a footnote for Proposition \ref{prop.counting-inputs}. In particular one can take $\vep_G = \vep/(100L_X)$ according to \cite[Section 6.2]{PYY25a} and \cite{PYYY}, where $\vep>0$ is the two-scale constants in the improved Climenhaga-Thompson criterion. In \cite[Section 6.2, above Equation (6.5)]{PYY25a} $\vep$ can be taken arbitrarily small (smaller than $\vep_K$, for instance), and the specification scale $\delta$ is taken after that. From Remark \ref{r.6.5} we get that $\epsilon_{\mathrm{per}} = \vep_K/2 > \epsilon_G$. Then, the previous lemma together with Proposition \ref{prop.counting-inputs} implies that
	\[
	\sum_{\gamma \in \Pi_\Lambda(t,\delta_{\mathrm{per}})}
	\left\lfloor\frac{\ell(\gamma)}b\right\rfloor
	=\#E_t \le \Lambda(\Lambda, \epsilon_{\mathrm{per}}, t) \le \Lambda(\Lambda, \epsilon_{G}, t)
	\le Qe^{ht}.
	\]
	Here we increase $Q$ in Proposition \ref{prop.counting-inputs} such that the upper estimate in \eqref{eq.global-partition-upper} extends from integer times to all real times.

	For large $t$, every summand on the left is at least $t/(2b)$. Therefore
	\[
	\frac{t}{2b}\#\Pi_\Lambda(t,\delta_{\mathrm{per}})
	\le Qe^{ht},
	\]
	which proves \eqref{eq.narrow-window-upper}.
\end{proof}

This leads to the following cumulative bound. 
\begin{proposition}\label{prop.cumulative-upper}
	Under the assumptions of Theorem \ref{m.counting}, there is $C_2>0$ such that
	\[
	\#\Pi_\Lambda(t)\le C_2\frac{e^{ht}}t
	\]
	for every sufficiently large $t$.
\end{proposition}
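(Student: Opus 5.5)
The plan is to decompose $\Pi_\Lambda(t)$ into narrow windows of width $\delta_{\mathrm{per}}$ and sum the bounds from Proposition \ref{prop.narrow-window-upper}. The resulting series is dominated by its last terms, because $e^{hs}/s$ grows geometrically in $s$ on the scale of $\delta_{\mathrm{per}}$.

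\textbf{Step 1 (decomposition).} Fix $t_1$ so large that \eqref{eq.narrow-window-upper} holds for every $s\ge t_1$. Let $N=\lceil (t-t_1)/\delta_{\mathrm{per}}\rceil$. Then
\[
\Pi_\Lambda(t)\subset \Pi_\Lambda(t_1)\cup\bigcup_{j=0}^{N-1}\Pi_\Lambda(t-j\delta_{\mathrm{per}},\delta_{\mathrm{per}}).
\]
The set $\Pi_\Lambda(t_1)$ is finite and independent of $t$. To see this, use Lemma \ref{lem.phase-separated} at time $t_1$ applied window by window, or alternatively use the fact that periodic orbits of period at most $t_1$ are isolated by Komuro expansivity. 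This gives a constant $A$ with $\#\Pi_\Lambda(t_1)\le A$.

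\textbf{Step 2 (summing the windows).} By Proposition \ref{prop.narrow-window-upper}, the $j$-th window contributes at most $C_1 e^{h(t-j\delta_{\mathrm{per}})}/(t-j\delta_{\mathrm{per}})$. Split the sum at $j\le t/(2\delta_{\mathrm{per}})$.

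For those $j$ we have $t-j\delta_{\mathrm{per}}\ge t/2$, so the terms are bounded by
\[
\frac{2C_1}{t}\,e^{ht}\sum_{j\ge0}e^{-hj\delta_{\mathrm{per}}}=\frac{2C_1}{(1-e^{-h\delta_{\mathrm{per}}})}\frac{e^{ht}}{t}.
\]
For the remaining $j$ we have $t-j\delta_{\mathrm{per}}\ge t_1$ and the exponent is at most $ht/2$. There are at most $t/\delta_{\mathrm{per}}$ such terms, so their total is at most
\[
\frac{C_1}{t_1}\frac{t}{\delta_{\mathrm{per}}}e^{ht/2},
\]
which is $o(e^{ht}/t)$ since $h>0$. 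Adding $A$ and taking $t$ large gives the claim with, for example, $C_2=4C_1/(1-e^{-h\delta_{\mathrm{per}}})$.

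\textbf{Main obstacle.} The only nontrivial point is positivity of $h$, which ensures that the geometric sum converges and that the tail is negligible; this is assumed in Theorem \ref{m.counting}. A second technicality is that \eqref{eq.narrow-window-upper} must be usable at non-integer times $t-j\delta_{\mathrm{per}}$. This is fine because the proof of Proposition \ref{prop.narrow-window-upper} already extends \eqref{eq.global-partition-upper} to real times.
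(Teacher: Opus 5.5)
Your proposal is correct and follows the paper's proof essentially verbatim. Like the paper, you split $\Pi_\Lambda(t)$ into windows of width $\delta_{\mathrm{per}}$ and apply Proposition \ref{prop.narrow-window-upper} to each window. You then bound the windows with $t-j\delta_{\mathrm{per}}\ge t/2$ by a geometric series of order $e^{ht}/t$, and the remaining windows by something negligible. Your justification that $\#\Pi_\Lambda(t_1)<\infty$, via the window-by-window separation argument of Lemma \ref{lem.phase-separated}, fills in a point the paper takes for granted.
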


\begin{proof}
	Partition the interval of possible periods into intervals of width
	$\delta_{\mathrm{per}}$. Proposition
	\ref{prop.narrow-window-upper} then gives
	\[
	\#\Pi_\Lambda(t)
	\le \#\Pi_\Lambda(T_0)+
	C_1\sum_{k\ge0:\,t-k\delta_{\mathrm{per}}\ge T_0}
	\frac{e^{h(t-k\delta_{\mathrm{per}})}}
	{t-k\delta_{\mathrm{per}}}.
	\]
	For the terms with $t-k\delta_{\mathrm{per}}\ge t/2$, the denominator is
	at least $t/2$ and consequently the resulting geometric series is bounded by a constant
	multiple of $e^{ht}/t$. The remaining terms are bounded by a constant
	multiple of $e^{ht/2}$, which is less than $O(e^{ht}/t)$. This proves the
	claim.
\end{proof}

\subsection{A fixed period window}

Combining Proposition \ref{prop.cumulative-lower} and \ref{prop.cumulative-upper} we see that, for some $C>1$ large, it holds 
$$
C^{-1}\frac{e^{ht}}{t}\le \#\Pi_\Lambda(t)\le C\frac{e^{ht}}{t}.
$$
This is precisely \eqref{eq.cumulative-counting}.

To obtain \eqref{eq.window-counting}, choose $\Delta>0$ so large that
\[
2Ce^{-h\Delta}<\frac{C^{-1}}{2}.
\]
Then, for $t$ sufficiently large,
\begin{align*}
	\#\Pi_\Lambda(t,\Delta)
	&=\#\Pi_\Lambda(t)-\#\Pi_\Lambda(t-\Delta)\\
	&\ge C^{-1}\frac{e^{ht}}t
	-C\frac{e^{h(t-\Delta)}}{t-\Delta}\\
	&\ge \frac{C^{-1}}{2}\frac{e^{ht}}t.
\end{align*}
The upper bound of $\#\Pi_\Lambda(t,\Delta)$ follows immediately from
$\#\Pi_\Lambda(t,\Delta)\le\#\Pi_\Lambda(t)$ and Proposition
\ref{prop.cumulative-upper}. This finishes the proof of \eqref{eq.window-counting}.

\subsection{Equidistribution}
We use the standard entropy argument for separated periodic orbits; see
\cite[Section~2.3]{BCFT}. We first record the narrow-window version. Here $\delta_{\mathrm{per}} >0$ is given by Lemma \ref{lem.phase-separated}.

\begin{lemma}\label{l.narrow}
Let $(s_k)_k$ be a sequence of real numbers tending to infinity, and for each $k$, let
$
\cA_k\subset \Pi_\Lambda(s_k,\delta_{\mathrm{per}})
$
be a collection of periodic orbits with
\begin{equation}\label{eq.full-entropy-narrow-family}
	\lim_{k\to\infty}
	\frac{1}{s_k}\log\#\cA_k=h.
\end{equation}
Then
\begin{equation*}
	\frac{1}{\#\cA_k}
	\sum_{\gamma\in\cA_k}\mu_\gamma
	\xrightarrow[k\to\infty]{w^*}
	\mu_{\mathrm{MME}}.
\end{equation*}
\end{lemma}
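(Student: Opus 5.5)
The proof will follow the standard entropy argument of Bowen (compare \cite[Section~2.3]{BCFT}), with the phase-separated sets of Lemma \ref{lem.phase-separated} in place of one point per orbit. Put $\nu_k:=\frac{1}{\#\cA_k}\sum_{\gamma\in\cA_k}\mu_\gamma$. These are $f_t$-invariant probability measures on the compact set $\Lambda$, so every subsequence has a further weak-$*$ convergent subsequence, and every limit $\nu$ is invariant and supported on $\Lambda$. Since $\mu_{\mathrm{MME}}$ is the unique measure of maximal entropy, it is enough to show $h_\nu(f_1)\ge h$ for every such limit $\nu$.

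\emph{Step 1: a separated set carrying $\nu_k$.} For $\gamma\in\cA_k$ take $E_\gamma=\{f_{jb}(x_\gamma):0\le j<m_\gamma\}$ as in Lemma \ref{lem.phase-separated}, and put $E^{(k)}=\bigcup_{\gamma\in\cA_k}E_\gamma$. Since $\cA_k\subset\Pi_\Lambda(s_k,\delta_{\mathrm{per}})$, that lemma shows $E^{(k)}$ is $(s_k,\epsilon_{\mathrm{per}})$-separated. Every $\ell(\gamma)$ lies in $(s_k-\delta_{\mathrm{per}},s_k]$, so $m_\gamma=\lfloor\ell(\gamma)/b\rfloor$ takes essentially a single value $m_k\approx s_k/b$, up to a bounded error. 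Let $\sigma_k$ be the uniform probability measure on $E^{(k)}$, so $\#E^{(k)}\asymp \#\cA_k\,s_k/b$. Averaging along orbits gives
\[
\frac{1}{s_k}\int_0^{s_k}(f_t)_*\sigma_k\,dt=\nu_k+O(b/s_k)
\]
in total variation. The error comes from the incomplete final sub-interval of each orbit and from the difference between $\ell(\gamma)$ and $s_k$, and both are bounded in size. Moreover
\[
\frac1{s_k}\log\#E^{(k)}=\frac1{s_k}\log\#\cA_k+O\!\left(\frac{\log s_k}{s_k}\right)\to h .
\]
In particular the extra factor $s_k/b$ does not change the exponential growth rate.

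\emph{Step 2: Misiurewicz's argument.} Now apply the second half of the proof of the variational principle. Take a partition $\xi$ of $\Lambda$ with diameter less than $\epsilon_{\mathrm{per}}$ and $\nu(\partial\xi)=0$. Each element of $\bigvee_{i=0}^{n-1}f_{-i}\xi$, with $n=\lfloor s_k\rfloor$, contains at most one point of $E^{(k)}$. This is immediate for integer times; handling the flow between integer times needs the uniform continuity of $f_s$ for $s\in[0,1]$, which is why I would either shrink $\epsilon_{\mathrm{per}}$ or refine $\xi$. Hence $H_{\sigma_k}\bigl(\bigvee_{i<n}f_{-i}\xi\bigr)=\log\#E^{(k)}$. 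The usual splitting into blocks of length $q$, together with concavity of entropy, gives
\[
\frac{q}{n}\log\#E^{(k)}\le H_{\bar\sigma_k}\Big(\bigvee_{i<q}f_{-i}\xi\Big)+\frac{2q^2}{n}\log\#\xi ,
\]
where $\bar\sigma_k$ is the time-average of $\sigma_k$ from Step 1. Letting $k\to\infty$ along the subsequence, and using $\bar\sigma_k\to\nu$ together with $\nu(\partial\xi)=0$, gives $h\le \frac1q H_\nu(\bigvee_{i<q}f_{-i}\xi)$. Letting $q\to\infty$ gives $h_\nu(f_1)\ge h$, hence $\nu=\mu_{\mathrm{MME}}$.

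\emph{Main obstacle.} The delicate step is Step 1: the measure carried by the separated set has to reproduce $\nu_k$ up to $o(1)$. If we used one point per orbit, the averaged measure would again be $\nu_k$, but the set would only be separated modulo phase. Komuro expansivity, through Lemma \ref{lem.phase-separated}, gives genuine separation, at the price of weighting every orbit equally by $m_\gamma$. Because the periods lie in a window of width $\delta_{\mathrm{per}}$, these weights are nearly constant, and the averaged measure agrees with $\nu_k$ up to $O(1/s_k)$. Making this precise is a short bookkeeping computation, not a new idea.
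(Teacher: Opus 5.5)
Your proof is correct, and its skeleton is the paper's. Both take a separated set furnished by Lemma \ref{lem.phase-separated}, apply the standard (Misiurewicz) entropy estimate to its time-averaged empirical measures, invoke uniqueness of $\mu_{\mathrm{MME}}$, and use the narrow window to identify the limit with $\frac{1}{\#\cA_k}\sum_{\gamma\in\cA_k}\mu_\gamma$.

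The one real difference is your choice of separated set. You carry the whole set $\bigcup_{\gamma\in\cA_k}E_\gamma$, while the paper takes a single point $x_\gamma$ on each orbit. The reason you give in your ``main obstacle'' paragraph is mistaken. The set $\{x_\gamma:\gamma\in\cA_k\}$ is a subset of $E_{s_k}$, and Lemma \ref{lem.phase-separated} shows $E_{s_k}$ is genuinely $(s_k,\epsilon_{\mathrm{per}})$-separated, not merely separated modulo phase. For points on distinct orbits this is exactly the Komuro-expansivity part of that proof, and a subset of a separated set is separated.

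With one point per orbit, the cardinality is exactly $\#\cA_k$. Each $\frac1{s_k}\int_0^{s_k}\delta_{f_r(x_\gamma)}\,dr$ lies within $2\delta_{\mathrm{per}}/s_k$ of $\mu_\gamma$ in total variation. So the weight bookkeeping of your Step 1, though correct, is unnecessary. The $\lfloor\ell(\gamma)/b\rfloor$ points per orbit are what the upper counting bound needs to produce its factor $1/t$; equidistribution does not need them.

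Two small technical points in Step 2:
\begin{itemize}
\item Shrinking $\epsilon_{\mathrm{per}}$ does not handle the intermediate times. What you need is $\operatorname{diam}\xi$ small relative to $\epsilon_{\mathrm{per}}$, via uniform continuity of $f_s$ for bounded $s$; that is, refine $\xi$.
\item The block argument controls the discrete average $\frac1n\sum_{i<n}(f_i)_*\sigma_k$, not the continuous one. This is reconciled in the standard way, for example by running the argument for $\int_0^1(f_r)_*\sigma_k\,dr$ and using concavity of entropy.
\end{itemize}
Both points are covered by the ``standard entropy estimate'' that the paper cites as a black box.
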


\begin{proof}
	Choose one point $x_\gamma\in\gamma$ for every
	$\gamma\in\cA_k$ to form a set $A_k$.
	By Lemma \ref{lem.phase-separated}, $A_k$ is a subset of $E_{s_k}$ and is therefore
	$(s_k,\epsilon_{\mathrm{per}})$-separated. Consider
	$$
	\eta_k
	:=
	\frac{1}{\#A_k}
	\sum_{x\in A_k}
	\frac{1}{s_k}\int_0^{s_k}\delta_{f_r(x)}\,dr.
	$$
	By \eqref{eq.full-entropy-narrow-family} and the standard entropy estimate
	for empirical measures of separated sets, every weak-* accumulation point
	$\eta$ of $(\eta_k)$ satisfies
	$$
	h_\eta(X|_\Lambda)\ge h.
	$$
	Since $h=h_{\mathrm{\mathrm{top}}}(X|\Lambda)$, it follows that $\eta$ is a measure of maximal entropy and, by
	uniqueness, must coincide with $\mu_{\mathrm{MME}}$.

	On the other hand, since $|s_k-\ell(\gamma)|<\delta_{\mathrm{per}}$ for all $\gamma\in\cA_k,$ we see that 
	$$
	\frac{1}{\#\cA_k}
	\sum_{\gamma\in\cA_k}\mu_\gamma
	$$
	must converge to the same weak-* limit as $\eta_k$. This proves the lemma.
\end{proof}

We now pass from narrow windows to the fixed window of size $\Delta$. 

\begin{lemma}\label{l.seq}
	For every sequence of real numbers $(t_k)_k$ tending to infinity, there is a subsequence $t_{n_k}$ along which $\nu_{t_{n_k},\Delta}$ converges to $\mu_{\mathrm{MME}}$ in weak-*. 
\end{lemma}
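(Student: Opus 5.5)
We will split the fixed window $(t_k-\Delta,t_k]$ into the narrow windows of width $\delta_{\mathrm{per}}$ and apply Lemma \ref{l.narrow} to those that carry most of the mass. Write $N=\lceil\Delta/\delta_{\mathrm{per}}\rceil$ and, for $i=0,\dots,N-1$, set $s_{k,i}=t_k-i\delta_{\mathrm{per}}$ and $\cA_{k,i}=\Pi_\Lambda(s_{k,i},\delta_{\mathrm{per}})\cap\Pi_\Lambda(t_k,\Delta)$. These sets are disjoint and their union is $\Pi_\Lambda(t_k,\Delta)$. Therefore
\[
\nu_{t_k,\Delta}=\sum_{i=0}^{N-1}w_{k,i}\,\nu^{(i)}_k,\qquad w_{k,i}=\frac{\#\cA_{k,i}}{\#\Pi_\Lambda(t_k,\Delta)},\qquad \nu^{(i)}_k=\frac{1}{\#\cA_{k,i}}\sum_{\gamma\in\cA_{k,i}}\mu_\gamma ,
\]
where a term is omitted when $\cA_{k,i}=\emptyset$.

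The weights lie in the compact simplex in $\RR^N$, so we pass to a subsequence $n_k$ along which $w_{n_k,i}\to w_i$ for every $i$, with $\sum_i w_i=1$. Next we separate the indices into two classes. If $w_i>0$, then $\#\cA_{n_k,i}\ge \frac{w_i}{2}\#\Pi_\Lambda(t_{n_k},\Delta)\ge c\,e^{ht_{n_k}}/t_{n_k}$ by \eqref{eq.window-counting}. Combined with the trivial upper bound $\#\cA_{n_k,i}\le \#\Pi_\Lambda(s_{n_k,i})\le Ce^{hs_{n_k,i}}/s_{n_k,i}$ from \eqref{eq.cumulative-counting}, and using $|s_{n_k,i}-t_{n_k}|\le\Delta$, this gives $\frac1{s_{n_k,i}}\log\#\cA_{n_k,i}\to h$. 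The family $\cA_{n_k,i}$ is contained in $\Pi_\Lambda(s_{n_k,i},\delta_{\mathrm{per}})$, so Lemma \ref{l.narrow} applies and yields $\nu^{(i)}_{n_k}\to\mu_{\mathrm{MME}}$. The last window, with $i=N-1$, may be truncated by the left endpoint $t_k-\Delta$, but this is harmless because Lemma \ref{l.narrow} only requires a subfamily of a narrow window.

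It remains to handle the indices with $w_i=0$. Their contribution $w_{n_k,i}\nu^{(i)}_{n_k}$ has total mass at most $w_{n_k,i}\to0$, so it vanishes in the weak-* limit. Therefore, for any continuous $\phi$,
\[
\int\phi\,d\nu_{t_{n_k},\Delta}\to\sum_{w_i>0}w_i\int\phi\,d\mu_{\mathrm{MME}}=\int\phi\,d\mu_{\mathrm{MME}}.
\]

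The main point requiring care is the entropy condition \eqref{eq.full-entropy-narrow-family}. It is here that the two-sided counting bound is essential, since it ensures that a positive proportion of a fixed window already has full exponential growth at the level of a single narrow window. A cruder upper bound would not give this. Everything else is bookkeeping. Finally, since every sequence has a subsequence converging to the same limit $\mu_{\mathrm{MME}}$, this lemma yields \eqref{eq.window-equidistribution}.
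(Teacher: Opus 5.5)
Your proof is correct and is essentially the paper's argument. You split $\Pi_\Lambda(t_k,\Delta)$ into narrow windows of width $\delta_{\mathrm{per}}$, pass to a subsequence along which the weights converge, and apply Lemma \ref{l.narrow} to the windows with positive limiting weight, using \eqref{eq.window-counting} to get full exponential growth. The differences are cosmetic: you truncate the last window instead of adjusting $\Delta$ so that $\Delta/\delta_{\mathrm{per}}$ is an integer, and you take the upper bound from \eqref{eq.cumulative-counting} rather than Proposition \ref{prop.narrow-window-upper}. Your closing remark slightly misplaces the credit: any upper bound of the form $e^{(h+o(1))t}$ would suffice, so it is the lower bound, not the two-sidedness, that carries the argument.
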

\begin{proof}
	
	We may assume w.l.o.g.\ that ${\Delta}/{\delta_{\mathrm{per}}}$ is an integer (by slightly altering $\Delta$), which we call $N$.
	For $1\le j\le N$, define
	$$
	s_j(t):=t-(j-1)\delta_{\mathrm{per}},
	\qquad
	\cA_j(t):=\Pi_\Lambda(s_j(t),\delta_{\mathrm{per}}).
	$$
	Then we have 
	$$
	\Pi_\Lambda(t,\Delta)=\bigsqcup_{j=1}^N\cA_j(t).
	$$
	To simplify notation, write
	$$
	n_j(t):=\#\cA_j(t),
	\qquad
	w_j(t):=\frac{n_j(t)}{\#\Pi_\Lambda(t,\Delta)}.
	$$
	Whenever $n_j(t)>0$, put
	$$
	\tilde \nu_{j,t}:=
	\frac{1}{n_j(t)}
	\sum_{\gamma\in\cA_j(t)}\mu_\gamma.
	$$
	Then direct calculation yields
	\begin{equation}\label{eq.fixed-window-convex-decomposition}
		\nu_{t,\Delta}
		=
		\sum_{{1\le j\le N}, n_j(t)>0}
		w_j(t)\tilde\nu_{j,t}.
	\end{equation}
	
	Now fix any sequence $t_k\to\infty$ and note that $N$ is fixed. Then, there is a subsequence, still denoted by $t_k$, such that each  
	$
	w_j(t_k)$ converges to some $w_j$ as $k\to\infty$, $1\le j\le N.$ The terms for which $w_j=0$ have vanishing total weight.
	Suppose that $w_j>0$, then for all sufficiently large $k$,
	$$
	n_j(t_k)
	\ge
	\frac{w_j}{2}\#\Pi_\Lambda(t_k,\Delta)
	\ge
	c_j\frac{e^{ht_k}}{t_k}.
	$$
	This, together with the upper bound given in Proposition \ref{prop.narrow-window-upper}, yields (note that $|t_k-s_j(t_k)|\le\Delta$)
	$$
	\lim_{k\to\infty}
	\frac{1}{s_j(t_k)}\log n_j(t_k)=h.
	$$
	Lemma \ref{l.narrow} therefore gives
	$$
	\tilde\nu_{j,t_k}\xrightarrow[k\to\infty]{w^*}\mu_{\mathrm{MME}}
	$$
	whenever $w_j>0$.
	It now follows
	from \eqref{eq.fixed-window-convex-decomposition} that 
	$$
	\nu_{t_k,\Delta}\to\mu_{\mathrm{MME}}
	$$
	in the weak-* topology.
\end{proof}

\begin{proof}[Proof of \eqref{eq.window-equidistribution}: convergence of $\nu_{t,\Delta}$ to $\mu_{\mathrm{MME}}$]
	Assume that this is not true, then by the compactness of the set of all probability measures on $M$ under the weak-* topology, one can find a sequence of time $t_k\to\infty$ along which $\nu_{t_k,\Delta}$ converges to another measure $\nu$. However, Lemma \ref{l.seq} shows that there is a subsequence $t_{n_k}$ along which $\nu_{t_{n_k},\Delta}$ converges to $\mu_{\mathrm{MME}}$, which is a contradiction.
\end{proof}

Finally, we prove cumulative equidistribution. Fix $K\ge1$. The interval
$(0,t]$ is the disjoint union of
$$
(0,t-K\Delta]
\quad\text{and}\quad
(t-(j+1)\Delta,t-j\Delta],
\qquad 0\le j<K.
$$
Set
$$
\overline  r_{t,K}
:=
\frac{\#\Pi_\Lambda(t-K\Delta)}{\#\Pi_\Lambda(t)}, \qquad  r_{j,t} = \frac{\#\Pi_\Lambda(t-j\Delta,\Delta)}{\#\Pi_\Lambda(t)},\qquad 0\le j<K.
$$
Then we have the convex decomposition
\begin{equation*}
	\overline \nu_{t} =\overline r_{t,K} \cdot \overline \nu_{t-K\Delta} + \sum_{j=0}^{K-1} r_{j,t} \cdot \nu_{t-j\Delta,\Delta}.
\end{equation*}
Consequently, for any continuous function $\phi,$ we have 
\begin{align*}
	\left|
	\bar\nu_t(\phi)-\mu_{\mathrm{MME}}(\phi)
	\right|
	&\le
	\overline r_{t,K}
	\left|
	\bar\nu_{t-K\Delta}(\phi)-\mu_{\mathrm{MME}}(\phi)
	\right|\\
	&\quad+
	\sum_{j=0}^{K-1}r_{j,t}
	\left|
	\nu_{t-j\Delta,\Delta}(\phi)
	-\mu_{\mathrm{MME}}(\phi)
	\right|\\\numberthis\label{e.last}
	&\le
	2|\phi|_{C^0}\cdot \overline r_{t,K}
	+
	\max_{0\le j<K}
	\left|
	\nu_{t-j\Delta,\Delta}(\phi)
	-\mu_{\mathrm{MME}}(\phi)
	\right|.
\end{align*}

By the two-sided cumulative estimate \eqref{eq.cumulative-counting}, the error term satisfies
$$
	\overline r_{t,K}
	\le
	C^2 e^{-hK\Delta}\frac{t}{t-K\Delta}
$$
whenever $t-K\Delta$ is sufficiently large. In particular,
\begin{equation}\label{e.r}
	\limsup_{t\to\infty}\overline r_{t,K}
	\le C^2e^{-hK\Delta}.
\end{equation}
Note that as $t\to\infty$ with $K$ fixed, each $\nu_{t-j\Delta,\Delta}$ converges to $\mu_{\mathrm{MME}}$ in weak-* topology, and therefore the max term vanishes. Thus we obtain from \eqref{e.last} and \eqref{e.r} that
$$
\limsup_{t\to\infty}\left|\overline \nu_{t}(\phi) - \mu_{\mathrm{MME}}(\phi)\right|\le 2C^2|\phi|_{C^0} e^{-hK\Delta}.
$$
Sending $K$ to infinity then gives $\overline \nu_{t}\to \mu_{\mathrm{MME}}$ in weak-* as desired. This concludes the proof of Theorem \ref{m.counting}.

\begin{proof}[Proof of Theorem \ref{m.robust.counting}]
	We note that in the proof of Theorem \ref{m.counting} above, the following main ingredients are used:
	\begin{itemize}
		\item A Gibbs upper bound and the existence of the MME are used to obtain the lower bound on $\#\Pi_\Lambda(t)$.
		\item A partition sum upper bound and Komuro expansivity show the upper bound on $\#\Pi_\Lambda(t,\delta_{\mathrm{per}})$.
		\item Uniqueness of the MME on $\Lambda$ leads to equidistribution. 
	\end{itemize}
	All ingredients have been established on $\Lambda_Y$ for vector fields $Y$ that are $C^1$ close to $X$: the existence and uniqueness of the MME can be found in \cite[Thereom H]{PYY25a} under the exact same assumptions; Gibbs upper bound and the partition sum upper bound can be found in \cite[Section 9]{PYY25a} where the assumptions of \cite{PYYY} are verified; finally, Komuro expansivity is proven by Theorem \ref{m.e}. This establishes Theorem \ref{m.robust.counting} and concludes this section. 
\end{proof}

\section{Application: star flows}

In this section we apply Theorem \ref{m.e}, \ref{m.counting} and \ref{m.robust.counting} to star flows. Recall that a star flow is a flow such that all nearby flows have only hyperbolic critical elements. The main result of \cite{BdL} shows that on a $C^1$ open and dense subset, the chain recurrent set of any star flow can be decomposed into finitely many multi-singular hyperbolic pieces. Even though the original definition of multi-singular hyperbolicity in \cite{BdL} is different from the one in Definition \ref{def.multi-sing-hyp}, it is proven in \cite{CLYZ} that on a $C^1$ open and dense subset, these two definitions coincide.

\subsection{Proof of Theorem \ref{m.e.star}}
\begin{proof}
	By \cite[Theorem~3]{BdL} and \cite{CLYZ} there exists a $C^1$ open and dense subset
	$\cO\subset\xX^1_*(M)$ such that, for every $X\in\cO$,
	the chain recurrent set $\CR(X)$ is contained in the union of finitely
	many pairwise disjoint filtrating regions $R_1,\ldots,R_m,$
	and $X$ is multi-singular hyperbolic in each $R_i$ in the sense of Definition \ref{def.multi-sing-hyp}. We remark that $\CR(X)\cap R_j$ may not be chain transitive for each $j$.
	
	Fix $X\in\cO$, and for $1\le i\le m$ let
	\[
	\Lambda_i:=\bigcap_{t\in\RR}f_t^X(R_i)
	\]
	be the maximal invariant set in $R_i$. Then we have 
	\begin{equation*}
		\CR(X)\subset\bigcup_{i=1}^m\Lambda_i.
	\end{equation*}
	Moreover, each $\Lambda_i$ is a multi-singular hyperbolic set. %If one of 	the sets $\Lambda_i$ consists only of singularities, then it is a finite 	set of isolated hyperbolic singularities, and the robust expansivity 	conclusion below is immediate for this set. Below we only consider those $\Lambda_i$ that are non-trivial.
	
	Apply Theorem \ref{m.e} to each $\Lambda_i$. We obtain a $C^1$
	neighborhood $\cV_i$ of $X$ and an open neighborhood $U_i$ of
	$\Lambda_i$ such that, for every $Y\in\cV_i$, the restriction of the
	flow of $Y$ to
	\[
	\Lambda_i(Y):=\bigcap_{t\in\RR}f_t^Y(U_i)
	\]
	is Komuro expansive. Furthermore, for each fixed $\vep>0$, the
	corresponding Komuro expansivity constant can be chosen uniformly over
	all $Y\in\cV_i$. We may assume that all $U_i$ have disjoint closure. 
	The set
	\[
	U:=\bigcup_{i=1}^m U_i
	\]
	is an open neighborhood of $\CR(X)$. Since the chain recurrent set varies upper semicontinuously with the
	vector field, after shrinking a $C^1$ neighborhood $\cV_0$ of $X$, we
	have
	\[
	\CR(Y)\subset U
	\qquad\text{for every }Y\in\cV_0.
	\]
	Since $\CR(Y)$ is invariant and $U_i$ are pairwise disjoint, it
	follows that $\CR(Y)$ decomposes into the finitely many $\Lambda_i(Y)\subset U_i$.
	Below we show that every $Y\in\cV$ is Komuro expansive when restricted to its chain recurrent set.
	
	Fix $\vep>0$. For each $i$, let $\delta_i>0$ be a Komuro expansivity
	constant for the maximal invariant set $\Lambda_i(Y)$, chosen uniformly
	over $Y\in\cV$. Since the closures of the $U_i$ are pairwise disjoint,
	\[
	d_0:=
	\min_{i\ne j}d(\overline{U_i},\overline{U_j})>0.
	\]
	Define
	\[
	\delta:=
	\min\left\{\frac{d_0}{2},\delta_1,\ldots,\delta_m\right\}>0.
	\]
	Let $Y\in\cV$, let $x,y\in\CR(Y)$, and let
	$\theta:\RR\to\RR$ be a time reparametrization with
	$\theta(0)=0$ such that
	\[
	d\bigl(f_t^Y(x),f_{\theta(t)}^Y(y)\bigr)<\delta
	\qquad\text{for every }t\in\RR.
	\]
	Since $\delta<d_0/2$, $x,y$ must be contained in the same $\Lambda_i$. 
	The uniform Komuro expansivity of $\Lambda_i(Y)$ now gives a time
	$t_0\in\RR$ such that
	\[
	f_{\theta(t_0)}^Y(y)
	\in f_{[t_0-\vep,t_0+\vep]}^Y(x).
	\]
	The constant $\delta$ depends on $\vep$ and on the fixed neighborhood
	$\cV$, but not on $Y\in\cV$. This proves the theorem.
\end{proof}

\subsection{Proof of Theorem \ref{m.counting.star}}
\begin{proof}[Proof of Theorem\ref{m.counting.star}]
	By \cite[Theorem C]{PYY25a}, there exists a $C^1$ open and dense set $\cO_1\subset\xX^1_*(M)$ such that every $X\in \cO_1$ with topological entropy $h>0$ has a unique measure of maximal entropy $\mu$. Furthermore, there exists an open set $U\subset M$ with $\CR(X)\cap\partial U=\emptyset$ and $\eta>0$ satisfying the following entropy separation properties:
	\[
	\Lambda:=\bigcap_{t\in\RR} f_t(U),
	\qquad
	K:=\CR(X)\setminus U
	\]
	are compact invariant sets; moreover, $\Lambda$ carries the unique
	measure of maximal entropy of $X$, and
	\begin{equation}\label{eq.star-gap-fixed-X}
		h_{\mathrm{\mathrm{top}}}(X|_{\Lambda})
		=
		h_{\mathrm{\mathrm{top}}}(X)=h,
		\qquad
		h_{\mathrm{\mathrm{top}}}(X|_{K})
		\le h-\eta.
	\end{equation}
	
	Moreover, by the robust thermodynamic estimates established in
	\cite{PYY25a}, the conclusions of Theorem \ref{m.robust.counting} hold
	for the maximal invariant set in $U$. Next let $\cO_{\mathrm{K}}\subset\xX^1_*(M)$ be the open and dense set in Theorem 	\ref{m.e.star}. Then
	\[
	\cO:=\cO_{1}\cap\cO_{\mathrm{K}}
	\]
	is open and dense in $\xX^1_*(M)$. For each $X\in\cO$,
	by Theorem \ref{m.robust.counting}, there exist
	$\Delta>0$, $C_0>1$, and $t_0>0$ such that, for the maximal invariant set $\Lambda\subset U,$
	\begin{equation}\label{eq.star-local-counting}
		C_0^{-1}\frac{e^{ht}}t
		\le \#\Pi_\Lambda(t,\Delta)
		\le C_0\frac{e^{ht}}t,
		\qquad
		C_0^{-1}\frac{e^{ht}}t
		\le \#\Pi_\Lambda(t)
		\le C_0\frac{e^{ht}}t
	\end{equation}
	for all $t\ge t_0$, and the corresponding fixed-window and cumulative
	periodic-orbit averages on $\Lambda$ converge to
	$\mu_{\mathrm{MME}}$.

	We claim that the periodic orbits contained in $K = \CR(X)\setminus U$ have exponentially
	smaller growth. More precisely, with
	\[
	a:=h-\eta/2,
	\]
	there is $C_1>0$ such that
	\begin{equation}\label{eq.star-remainder-counting}
		\#\Pi_K(t)\le C_1\frac{e^{at}}t
	\end{equation}
	for all sufficiently large $t$.
	
	Indeed, by Theorem \ref{m.e.star}, the flow is Komuro expansive on
	$\CR(X)$. The argument of Lemma \ref{lem.phase-separated} shows the existence of
	$\rho>0$ and $\epsilon_{\mathrm{per}}>0$ such that one representative
	from each orbit in $\Pi_K(t,\rho)$ forms a
	$(t,\epsilon_{\mathrm{per}})$-separated set.
	
	Suppose that the corresponding narrow-window estimate
	\begin{equation}\label{eq.star-remainder-narrow}
		\#\Pi_K(t,\rho)\le C\frac{e^{at}}t
	\end{equation}
	fails for every $C>0$ for some large $t$. Then there exist $t_k\to\infty$ such that
	\[
	\#\Pi_K(t_k,\rho)
	\ge k\frac{e^{at_k}}{t_k}.
	\]
	Choose one point on each orbit in $\Pi_K(t_k,\rho)$ and denote the
	resulting separated set by $E_k$. As in the proof of equidistribution in Theorem \ref{m.counting}, consider the sequence of measures
	\[
	\zeta_k
	:=
	\frac1{\#E_k}
	\sum_{x\in E_k}
	\frac1{t_k}\int_0^{t_k}\delta_{f_s(x)}\,ds.
	\]
	Passing to a subsequence, assume that
	$\zeta_k\to\zeta$ in the weak-* topology. The measure $\zeta$ is
	invariant and supported on $K$. The standard entropy estimate for
	empirical measures of separated sets gives
	\[
	h_\zeta(f_1)
	\ge
	\limsup_{k\to\infty}
	\frac1{t_k}\log\#E_k
	\ge a.
	\]
	This contradicts \eqref{eq.star-gap-fixed-X}, since
	$a=h-\eta/2>h-\eta$. Thus \eqref{eq.star-remainder-narrow} holds for some
	$C>0$. Summing over consecutive windows of width $\rho$, exactly as in
	the proof of Proposition \ref{prop.cumulative-upper}, yields
	\eqref{eq.star-remainder-counting}.
	
	We now combine the two estimates. Since
	\[
	\Pi(t)=\Pi_\Lambda(t)\sqcup\Pi_K(t),
	\]
	the cumulative estimates in \eqref{eq.star-local-counting} and
	\eqref{eq.star-remainder-counting}, together with $a<h$, give
	\[
	C^{-1}\frac{e^{ht}}t
	\le \#\Pi(t)
	\le C\frac{e^{ht}}t
	\]
	for all sufficiently large $t$, after increasing $C$. The global fixed-window
	bounds can be obtained in the same way.
	
	It remains to prove equidistribution. Write
	\[
	\overline \nu^\Lambda_{t}
	:=
	\frac1{\#\Pi_\Lambda(t)}
	\sum_{\gamma\in\Pi_\Lambda(t)}\mu_\gamma.
	\]
	Then Theorem \ref{m.robust.counting} gives 
	\[
	\overline \nu^\Lambda_{t}
	\xrightarrow[t\to\infty]{w^*}\mu_{\mathrm{MME}}.
	\]
	
	Moreover,
	\[
	\frac{\#\Pi_K(t)}{\#\Pi(t)}
	=
	O\bigl(e^{-(h-a)t}\bigr)
	\longrightarrow0.
	\]
	Hence the contribution of the periodic orbits in $K$ to
	$\nu_{t}$ tends to zero, and therefore
	\[
	\overline \nu_{t}
	\xrightarrow[t\to\infty]{w^*}\mu_{\mathrm{MME}}.
	\]
	The same argument gives the equidistribution in the fixed-window case. 
	This completes the proof.
\end{proof}

\end{document}